\newtheorem{theorem}{Theorem}[section]
\newtheorem{proposition}[theorem]{Proposition}
\newtheorem{lemma}[theorem]{Lemma} 
\newtheorem{conjecture}[theorem]{Conjecture}
\newtheorem{corollary}[theorem]{Corollary} 
\theoremstyle{definition}
\newtheorem{definition}[theorem]{Definition}
\theoremstyle{remark}
\newtheorem{remark}[theorem]{Remark}
\newcommand\R{\mathbb R}
\newcommand\Z{\mathbb Z}
\newcommand\sym{\mathfrak S}
\newcommand\sgn{{\rm sgn\,}}
\newcommand\ZZ{\mathscr Z}
\newcommand\K{\mathscr K}
\newcommand\B{\mathscr B}
\newcommand{\conf}{F}
\newcommand{\cellConf}{\mathscr F}
\newcommand{\wreathOfConf}[3]{C_{#1}({#2};{#3})}
\newcommand{\cellWreathOfConf}[3]{\mathscr C_{#1}({#2};{#3})}
\newcommand{\wreathOfSym}[2]{\mathscr S_{#1}({#2})}
\newcommand{\wreathOfW}[3]{W_{#1}({#2};{#3})}
\newcommand{\littleCubeOperad}[2]{\mathrm{Cubes}_{#1}({#2})}
\newcommand\perim{\mathrm{perim}}
\newcommand\leb{{\mathscr L}^d}
\newcommand\res{\mathrm{res}}
\newcommand\trf{\mathrm{trf}}
\DeclareMathOperator{\EMP}{\mathrm{EMP}}
\DeclareMathOperator{\EAP}{\mathrm{EAP}}
\DeclareMathOperator{\aut}{\mathrm{Aut}}
\DeclareMathOperator{\interior}{\mathrm{int}}
\DeclareMathOperator{\id}{\mathrm{id}}
\DeclareMathOperator{\dH}{\textrm{d}_{\textrm H}}
\DeclareMathOperator{\dS}{\textrm{d}_{\textrm S}}
\DeclareMathOperator{\dist}{\textrm{dist}}
\DeclareMathOperator{\conv}{\textrm{conv}}
\DeclareMathOperator{\orient}{\mathrm{orient}}
\begin{document}

\title{Convex Equipartitions inspired by the little cubes operad
}
\dedicatory{Dedicated to G\"unter M. Ziegler on the occasion of his 60th Birthday}

\author[Blagojevi\'c]{Pavle V. M. Blagojevi\'{c}} 
\thanks{The research by Pavle V. M. Blagojevi\'{c} leading to these results has
        received funding from the Serbian Ministry of Science, Technological development and Innovations. The research of Nikola Sadovek is funded by the Deutsche Forschungsgemeinschaft (DFG, German Research Foundation) under Germany's Excellence Strategy – The Berlin Mathematics Research Center MATH+ (EXC-2046/1, project ID 390685689, BMS Stipend).}
\address{Inst. Math., FU Berlin, Arnimallee 2, 14195 Berlin, Germany\hfill\break
\mbox{\hspace{4mm}}Mat. Institut SANU, Knez Mihailova 36, 11001 Beograd, Serbia}
\email{blagojevic@math.fu-berlin.de} 
\author[Sadovek]{Nikola Sadovek} 
\address{Inst. Math., FU Berlin, Arnimallee 2, 14195 Berlin, Germany}
\email{nikolasdvk@gmail.com}

\begin{abstract}
 
A decade ago two groups of authors,  Karasev, Hubard \& Aronov and Blagojevi\'c \& Ziegler, have shown that the regular convex partitions of a Euclidean space into $n$ parts yield a solution to the generalised Nandakumar \& Ramana-Rao conjecture when $n$ is a prime power. 
This was obtained by parametrising the space of regular equipartitions of a given convex body with the classical configuration space. 

Now, we repeat the process of regular convex equipartitions many times,  first partitioning the Euclidean space into $n_1$ parts, then each part into $n_2$ parts,  and so on. 
In this way we obtain iterated convex equipartions of a given convex body into $n=n_1\cdots n_k$ parts. 
Such iterated partitions are parametrised by the (wreath) product of classical configuration spaces.
We develop a new configuration space -- test map scheme for solving the generalised Nandakumar \& Ramana-Rao conjecture using the Hausdorff metric on the space of iterated convex equipartions.

The new scheme yields a solution to the conjecture if and only if all the $n_i$'s are powers of the same prime. 
In particular, for the failure of the scheme outside prime power case we give three different proofs.

\end{abstract}

\maketitle

\medskip
\section{Introduction and statement of the main result}
\medskip

In Nandakumar's blog entry \cite{Nandakumar06} from 2006, Nandakumar and Ramana-Rao asked whether every convex polygon in the plane can be partitioned into any prescribed number $n$ of convex pieces that have equal area and equal perimeter.
They \cite{NandakumarRamanaRao12} gave an answer in case $n=2$ using the intermediate value theorem and proposed a proof for the case $n=2^k\geq 4$.
B\'ar\'any,  Blagojevi\'c \& Sz\H ucs \cite[Thm.\,1.1]{BaranyBlagojevicSzuecs10} gave the positive answer for the case $n=3$ by setting an appropriate configuration space -- test map (CS--TM) scheme which allowed efficient use of Fadell--Husseini ideal valued index theory. 
In 2018, the question was proved to be true by Akopyan, Avvakumov \& Karasev \cite{AkopyanAvvakumovKarasev18}.

\medskip
\begin{theorem}
\label{theorem: nandakumar and ramana rao d=2}
	 Every convex polygon $P$ in the plane can be partitioned into any prescribed number $n$ of convex pieces that have equal area and equal perimeter.
\end{theorem}

The question whether a higher dimensional extension of Theorem \ref{theorem: nandakumar and ramana rao d=2} holds was formulated in  \cite[Thm.\,1.3]{KarasevHubardAronov14} and \cite[Thm.\,1.3]{BlagojevicZiegler15}. Sober\'on \cite[Thm.\,1]{Soberon12} solved a similar question, namely the B\'ar\'any's conjecture of equally partitioning $d$ measures in $\R^d$ into $k$ pieces, using clever modification of the configuration space which allowed use of Dold's theorem \cite{Dold83}.

\medskip
\begin{conjecture} [Generalised Nandakumar \& Ramana-Rao] 
\label{conjecture: nandakumar and ramana rao for general d}
Let $K$ be a $d$-dimensional convex body in $\R^d$, let $\mu$ be an absolutely continuous probability measure on $\R^d$, let $n \ge 2$ be any natural number, and let $\varphi_1, \dots ,\varphi_{d-1}$ be any $d-1$ continuous functions on the metric space of $d$-dimensional convex bodies in $\R^d$. 
Then there exists a partition of $\R^d$ into $n$ convex pieces $P_1, \dots ,P_n$ such that equalities
	\[
		\mu(P_1 \cap K) = \dots = \mu(P_n \cap K)
	\]
	and
	\[
		\varphi_i(P_1 \cap K) = \dots = \varphi_i(P_n \cap K)
	\]
hold for every $1 \le i \le d-1$.
\end{conjecture}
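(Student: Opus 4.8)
The plan is to build a configuration space -- test map scheme of the type introduced by Karasev, Hubard \& Aronov and by Blagojevi\'c \& Ziegler, upgraded to iterated partitions as in the abstract, and to reduce the conjecture to an equivariant Borsuk--Ulam-type nonexistence statement. First I would recall the parametrisation of equimeasured regular convex partitions by optimal transport: for any choice of $n$ pairwise distinct sites $x_1,\dots,x_n\in\R^d$ there is a unique partition of $\R^d$ into convex pieces $P_1,\dots,P_n$ (the cells of an optimally weighted power diagram) with $\mu(P_1\cap K)=\dots=\mu(P_n\cap K)=\tfrac1n\mu(K)$, and this assignment is continuous and $\sym_n$-equivariant and extends continuously over a suitable compactification of the configuration space $\conf(\R^d,n)$. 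Composing the resulting equivariant map with the map that records the normalised deficiencies $\varphi_i(P_j\cap K)-\tfrac1n\sum_{\ell=1}^n\varphi_i(P_\ell\cap K)$ for $1\le i\le d-1$ gives a $\sym_n$-equivariant test map into $\bigoplus_{i=1}^{d-1}W_n$, where $W_n=\{t\in\R^n:\sum_j t_j=0\}$ is the standard $(n-1)$-dimensional representation of $\sym_n$; a zero of this test map is exactly a partition as demanded by the conjecture.

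Next I would iterate, following the paper: writing $n=n_1\cdots n_k$, first fix an equipartition of $K$ into $n_1$ pieces of equal $\mu$-mass, then inside each such piece an equipartition into $n_2$ pieces, and so on, parametrising the result by the wreath product of configuration spaces and recording the $\varphi_i$-deficiencies over all $n$ final cells. This upgraded space carries an action of the iterated wreath product $\sym_{n_1}\wr\cdots\wr\sym_{n_k}$, and the test map again lands in a sum of $d-1$ copies of a real $(n-1)$-dimensional representation. The conjecture then follows if there is \emph{no} equivariant map from the compactified (wreath) configuration space to the unit sphere of that representation. This last step I would attack with Fadell--Husseini ideal valued index theory: bound the index of the source from below using the high connectivity and the almost-free structure of configuration spaces, bound the index of the target from above, and obtain a contradiction whenever the two bounds are incompatible.

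The main obstacle -- and the reason the statement is still a conjecture rather than a theorem -- is precisely this index comparison. The lower bound on the index of the source is strong enough to beat the target only when the acting group contains an elementary abelian $p$-subgroup acting with a sufficiently free orbit structure on the configuration space, and in the iterated setting this forces every $n_i$, and hence $n$ itself, to be a power of a single prime $p$; the computation then runs inside $H^*\bigl((\mathbb{Z}/p)^{\bullet};\mathbb{F}_p\bigr)$ in the by-now standard way. For an $n$ divisible by two distinct primes no such subgroup is available: the relevant ideal valued indices coincide, and one can in fact construct the equivariant map explicitly, so the scheme provably fails -- this is the ``only if'' direction announced in the abstract, in the spirit of the known breakdown of configuration space methods outside the prime power range. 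Consequently, proving the conjecture for a general $n$ with more than one prime divisor would require a genuinely new ingredient: either a parametrisation of convex equipartitions whose symmetry group is smaller than the full (iterated) symmetric group, or a non-topological -- analytic or combinatorial -- argument replacing the index comparison. Securing such an input is where I expect the real difficulty to lie; the equivariant-topological reduction itself, and hence the conjecture in the prime power and single-prime iterated cases, is routine given the machinery above.
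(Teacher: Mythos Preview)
Your assessment is essentially correct: the statement is a conjecture that the paper does not prove in full generality, and you have accurately identified both the scope of what the CS--TM scheme delivers (the case where each $n_i$ in the iterated setup is a power of a single prime $p$) and the reason the scheme provably fails otherwise. Your summary of the iterated scheme, the wreath-product symmetry, and the ``only if'' direction matches the paper's content.

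There are, however, two methodological departures worth flagging. First, the paper does not use Fadell--Husseini ideal valued index theory as you propose; instead it runs equivariant obstruction theory in the sense of tom Dieck, building an explicit free $\mathscr S_k(n_1,\dots,n_k)$-CW model of the wreath product of configuration spaces (a product of the Blagojevi\'c--Ziegler Salvetti-type models) and computing the primary obstruction cocycle directly. The upshot is a concrete arithmetic criterion: the obstruction class vanishes if and only if $1$ is an integer linear combination of the binomial coefficients $\binom{n_i}{m}$ for $1\le i\le k$ and $1\le m\le n_i-1$, which by Ram's theorem happens exactly when the $n_i$ are not all powers of a single prime. Your index-theoretic route would presumably reach the same conclusion, but the obstruction-theoretic argument is more hands-on and yields this clean numerical characterisation without cohomology-ring computations. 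Second, there is no compactification involved: the paper works with the open configuration space and its equivariant deformation retract; the needed continuity (of the optimal-transport partition map, now jointly in the sites \emph{and} in the convex body being partitioned, with respect to the Hausdorff metric) is established directly and is in fact one of the paper's technical contributions, essential for the iterated scheme to make sense.

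For the existence direction (constructing the equivariant map when the $n_i$ are not all same-prime powers), the paper in fact gives three independent arguments: the obstruction computation just described, a construction via the structural map of the little cubes operad combined with the known $\mathfrak S_n$-equivariant map $F(\mathbb R^d,n)\to W_n^{\oplus d-1}\setminus\{0\}$, and an adaptation of \"Ozaydin's transfer trick using the $p$-Sylow subgroups of the wreath product. Your remark about ``constructing the equivariant map explicitly'' is closest in spirit to the operadic argument.
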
 

\medskip
In the previous statement, absolute continuity of probability measure in $\R^d$ is meant with respect to the Lebesgue measure on $\R^d$, and the set of $d$-dimensional convex bodies in $\R^d$ is endowed with the Hausdorff metric.

\medskip
Karasev, Hubard \& Aronov \cite{KarasevHubardAronov14} proposed a solution for the generalised Nandakumar \& Ramana-Rao problem using a CS--TM scheme based on the generalised Voronoi diagrams. 
The scheme was used to show that Conjecture \ref{conjecture: nandakumar and ramana rao for general d} follows from the non-existence of an $\sym_n$-equivariant map
\begin{equation} \label{eq: map from configuration space to the sphere}
	\conf(\R^d,n) \longrightarrow S(W_n^{\oplus d-1}),
\end{equation}
where the symmetric group $\sym_n$ acts on $\conf(\R^d,n) = \{(x_1, \dots, x_n) \in (\R^d)^{ n}: x_i \neq x_j \text{ for } i \neq j\}$ and $W_n = \{x\in \R^n:x_1+ \dots + x_n = 0\}$ by permuting the points and coordinates, respectively. 
Using a homological analogue of obstruction theory, they showed \cite[Thm.\,1.10]{KarasevHubardAronov14} non-existence of the map \eqref{eq: map from configuration space to the sphere} when $n$ is a prime power, and so gave the positive answer to the conjecture in this case. 

\medskip    
Blagojevi\'c \& Ziegler \cite[Thm.\,1.2]{BlagojevicZiegler15} showed that a map \eqref{eq: map from configuration space to the sphere} does not exist if and only if $n$ is a prime power using equivariant obstruction theory. 
In order to apply the method, they developed a Salvetti-type CW-model $\cellConf(d,n)$ of the configuration space $\conf(\R^d, n)$, which is moreover equivariant deformation retract. 
Resistance of Conjecture \ref{conjecture: nandakumar and ramana rao for general d} to the existing topological methods is manifested in the fact that the map \eqref{eq: map from configuration space to the sphere} exists whenever $n$ is not a prime power.

\medskip
Blagojevi\'c, L\"uck \& Ziegler \cite[Thm.\,8.3]{BlagojevicLueckZiegler15} showed the non-existence of a map \eqref{eq: map from configuration space to the sphere} for $n$ a prime using an analogue of Cohen's vanishing theorem \cite[Thm.\,8.2]{Cohen76}.

\medskip
In this paper we develop a cohomological method for identifying new classes of solutions to the generalised Nandakumar \& Ramana-Rao conjecture. 
The method stems from a geometric idea of \emph{iterated} generalised Voronoi diagrams. 
Namely, let $n = n_1 \cdots n_k$ be a multiplicative decomposition of the total number of parts in which we want to partition the convex body. 
In the first iteration the convex body is partitioned into $n_1$ convex pieces of equal area, and inductively, for each $2 \le i \le k$, in the $i$th iteration one further divides each of the $n_1\cdots n_{i-1}$ pieces into $n_i$ convex pieces of equal area. 
We call such partitions {\em iterated of level $k$ and type $(n_1, \dots ,n_k)$}.

Our method is developed in three steps. We begin by fixing a $d$-dimensional convex body $K$ in $\R^d$.

\medskip
In the first step, presented in Section \ref{section: cs--tm scheme}, we define a space $\wreathOfConf{k}{d}{n_1, \dots , n_k}$, called {\em the wreath product of configuration spaces}, which is used to parametrise iterated partitions of level $k$ and type $(n_1, \dots ,n_k)$ of $K$. 
Namely, we produce a map
\begin{equation} 
	\label{eq: parametrisation map, intro}
	\wreathOfConf{k}{d}{n_1, \dots , n_k} ~ \longrightarrow ~ \EMP_{\mu}(K, n),
\end{equation}
where the codomain is the (metric) space of equal mass partitions of $K$ into $n$ pieces with respect to some absolutely continuous measure $\mu$. 
Moreover, our setting has an appropriate symmetry of the iterated semi-direct product $\wreathOfSym{k}{n_1, \dots ,n_k}$ of the symmetric groups, and the map \eqref{eq: parametrisation map, intro} is equivariant with respect to it.
Now, by setting an appropriate CS--TM scheme and using the parametrisation map \eqref{eq: parametrisation map, intro}, we show that the existence of an iterated solution of level $k$ and type $(n_1, \dots ,n_k)$ to the general Nandakumar \& Ramana-Rao problem follows from non-existence of an $\wreathOfSym{k}{n_1, \dots ,n_k}$-equivariant map of the form
\begin{equation*} 
	\wreathOfConf{k}{d}{n_1, \dots ,n_k} ~ \longrightarrow ~ S(\wreathOfW{k}{d-1}{n_1, \dots ,n_k}).
\end{equation*}
Here $\wreathOfW{k}{d-1}{n_1, \dots ,n_k}$ stands for the relevant $\wreathOfSym{k}{n_1, \dots ,n_k}$-representation, and the corresponding unit sphere is denoted by $S(\wreathOfW{k}{d-1}{n_1, \dots ,n_k})$.

\medskip
In the second step, which is ``outsourced''  to Section \ref{section: continuity of partitions}, we prove the result on continuity of partitions. 
Namely, the CS--TM scheme from \cite{KarasevHubardAronov14} and \cite{BlagojevicZiegler15}  used the fact that for a $d$-dimensional convex body $K \subseteq \R^d$ there exists a continuous $\sym_n$-equivariant map
\[
	\conf(\R^d, n) ~ \longrightarrow ~ \EMP(K,n), \qquad x ~\longmapsto ~ (K_1(x), \dots, K_n(x)),
\]
where $(K_1(x), \dots, K_n(x))$ denotes the convex partition of $K$ into $n$ parts of equal measure obtained from the generalised Voronoi diagram with sites associated to $x \in \conf(\R^d, n)$. 
The existence of such a map follows from the theory of optimal transport. 
We generalise this fact and prove that the map
\[
	\K^d \times \conf(\R^d, n) ~ \longrightarrow ~ (\K^d)^{\times n}, \qquad(K, x) ~ \longmapsto ~ (K_1(x), \dots, K_n(x)) \in \EMP(K,n)
\]
is continuous, where $\K^d$ denotes the space of all $d$-dimensional convex bodies in $\R^d$ endowed with the Hausdorff metric. 
Continuity of this map, stated in Theorem \ref{theorem: continuity of partitions}, is a crucial result which justifies the continuity of the parametrisation map \eqref{eq: parametrisation map, intro}, and therefore verifies the validity of the CS--TM scheme from Section \ref{subsection: cs-tm scheme}.

\medskip
In the final and third step, contained in Section \ref{section: equivariant obstruction theory}, we prove the main topological result of the paper using equivariant obstruction theory.

\medskip
\begin{theorem} \label{theorem: main}
	Let $k \ge 1$, $d \ge 2$, and $n_1, \dots ,n_k \ge 2$ be integers. Then, an $\wreathOfSym{k}{n_1, \dots ,n_k}$-equivariant map of the form 
	\begin{equation} \label{eq: wreath product map in the introduction}
		\wreathOfConf{k}{d}{n_1, \dots ,n_k} \longrightarrow  S(\wreathOfW{k}{d-1}{n_1, \dots ,n_k})
	\end{equation}
	does not exist if and only if $n_1, \dots ,n_k$ are powers of the same prime number.
\end{theorem}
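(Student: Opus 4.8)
The plan is to turn the existence question for the map \eqref{eq: wreath product map in the introduction} into the vanishing of a single cohomological obstruction, and then to evaluate that obstruction one prime at a time. Write $G:=\wreathOfSym{k}{n_1,\dots,n_k}$, $W:=\wreathOfW{k}{d}{n_1,\dots,n_k}$ and $N:=\dim_\R W=(d-1)(n_1\cdots n_k-1)$, which is also the homotopy dimension of $\wreathOfConf{k}{d}{n_1,\dots,n_k}$. Using the wreath product $\cellWreathOfConf{k}{d}{n_1,\dots,n_k}$ of the Salvetti-type models $\cellConf(d,n_i)$ of \cite{BlagojevicZiegler15}, the space $\wreathOfConf{k}{d}{n_1,\dots,n_k}$ is $G$-homotopy equivalent to a \emph{free} $G$-CW complex of dimension $N$; and since the target $S(W)\cong S^{N-1}$ is $(N-2)$-connected, an equivariant map can always be built on the $(N-1)$-skeleton, so — the complex being $N$-dimensional — the only obstruction to extending it is the primary one
\[
	\mathfrak o\;\in\;H^{N}_{G}\!\left(\wreathOfConf{k}{d}{n_1,\dots,n_k};\,\Z_{W}\right),
\]
where $\Z_W$ is $\Z$ with $G$ acting through $g\mapsto\sgn\det(g|_{W})$. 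For $k=1$ this is the theorem of Blagojevi\'c--Ziegler \cite{BlagojevicZiegler15}, so assume $k\ge 2$; then $N\ge 3$, $S^{N-1}$ is simply connected, $\mathfrak o$ is a well-defined class, and \eqref{eq: wreath product map in the introduction} exists if and only if $\mathfrak o=0$. Restricting to the trivial subgroup the constant self-map of $S^{N-1}$ shows $\res^{G}_{1}\mathfrak o=0$, so by the transfer $\mathfrak o$ is $|G|$-torsion and vanishes if and only if $\res^{G}_{P}\mathfrak o=0$ for a Sylow $p$-subgroup $P$ of $G$, for \emph{every} prime $p$.

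For the ``only if'' direction, suppose $n_1,\dots,n_k$ are not all powers of one prime; then for every prime $p$ there is an index $i$ with $n_i$ not a power of $p$. Fix such $p$ and $i$. A Sylow $p$-subgroup $P$ of $G$ is an iterated wreath product of Sylow $p$-subgroups $Q_j\le\sym_{n_j}$, and since $n_i$ is not a power of $p$ the group $Q_i$ acts on $[n_i]$ with $r\ge 2$ orbits, so $W_{n_i}$ carries a nonzero trivial $Q_i$-subrepresentation. Picking a unit vector $v_0$ there and forming the diagonal vector $(v_0,\dots,v_0)$ inside the $i$-th ``level'' summand $(W_{n_i}^{\oplus(d-1)})^{\oplus m_i}$ of $W$ (with $m_i:=n_1\cdots n_{i-1}$) yields a point of $S(W)$ fixed by all of $P$: the coordinatewise copies of $Q_i$ fix $v_0$, the permutation of the $m_i$ copies fixes the diagonal, and higher levels act trivially on this summand. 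Thus $S(W)^{P}\ne\emptyset$, the constant map to this point is $P$-equivariant, and $\res^{G}_{P}\mathfrak o=0$; as $p$ was arbitrary, $\mathfrak o=0$ and the map exists.

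For the ``if'' direction, suppose $n_i=p^{\ell_i}$ for a single prime $p$ and all $i$, and put $L:=\sum_i\ell_i$. It suffices to find an elementary abelian $E\cong(\Z/p)^{L}\le G$ acting freely on $\wreathOfConf{k}{d}{n_1,\dots,n_k}$ with $\res^{G}_{E}\mathfrak o\ne 0$. Take $E=E_1\times\dots\times E_k$ with $E_i\cong(\Z/p)^{\ell_i}$ acting on the $i$-th level by the regular representation on $[p^{\ell_i}]$ in each of the $m_i$ copies simultaneously (and, as forced by the wreath structure, correspondingly permuting the copies of the lower-level data). A direct check shows that $W|_{E}$ contains \emph{every} nontrivial irreducible $E$-representation and \emph{no} trivial summand, so its $\mathbb F_p$-Euler class is a nonzero element of $H^{N}(BE;\mathbb F_p)$, being divisible by the (nonzero) Euler class of the reduced regular representation of $E$. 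Now $\res^{G}_{E}\mathfrak o$ reduces mod $p$ to this Euler class pulled back to $H^{N}_{E}(\wreathOfConf{k}{d}{n_1,\dots,n_k};\mathbb F_p)$, and it remains to show this pullback is nonzero, i.e.\ that the Euler class is not in the Fadell--Husseini index of $\wreathOfConf{k}{d}{n_1,\dots,n_k}$ over $E$. Using the product decomposition $\cellWreathOfConf{k}{d}{n_1,\dots,n_k}=\prod_i\cellConf(d,p^{\ell_i})^{\times m_i}$ and the corresponding splitting of $W$, this is established by induction on $k$: level by level, the class is represented by a (shuffle) product of copies of the obstruction cocycle for the single configuration space $\conf(\R^d,p^{\ell_i})$, whose non-vanishing over $(\Z/p)^{\ell_i}$ is exactly the result of Karasev--Hubard--Aronov \cite{KarasevHubardAronov14} and Blagojevi\'c--Ziegler \cite{BlagojevicZiegler15}, and one shows that this product of cocycles is not a coboundary in the relevant Borel construction.

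The comparatively soft ingredients are the obstruction-theoretic reduction, the transfer step, and the fixed-point construction; the technical heart — and the main obstacle — is the last step above: propagating the non-vanishing of the top Euler class through the diagonal and iterated-wreath products occurring in $\cellWreathOfConf{k}{d}{n_1,\dots,n_k}$, i.e.\ a Künneth-type bootstrap of the single-configuration-space computation of \cite{KarasevHubardAronov14,BlagojevicZiegler15}. The entire dichotomy ``powers of one prime versus not'' is located precisely in the contrast between the last two directions: whether the relevant Sylow subgroup has a fixed point on $S(W)$, or whether its Euler class survives in $H^{N}(\wreathOfConf{k}{d}{n_1,\dots,n_k}/E;\mathbb F_p)$.
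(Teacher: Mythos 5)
Your existence half (the case where $n_1,\dots,n_k$ are not all powers of one prime) is sound and is essentially the paper's own second alternative proof of that implication (the \"Ozaydin trick of Section \ref{section: existence via ozaydin}): restrict the primary obstruction to a Sylow $p$-subgroup $\wreathOfSym{k}{n_1,\dots,n_k}^{(p)}$, exhibit a fixed point on $S(\wreathOfW{k}{d}{n_1,\dots,n_k})$ by placing a $Q_i$-fixed vector of $W_{n_i}$ diagonally in all copies of the level-$i$ summand, and conclude by transfer over all primes dividing the group order. Only your bookkeeping is off: in the inductive convention $\wreathOfW{k}{d}{n_1,\dots,n_k}=\wreathOfW{k-1}{d}{n_1,\dots,n_{k-1}}^{\oplus n_k}\oplus W_{n_k}^{\oplus d-1}$, the summand $W_{n_i}^{\oplus d-1}$ occurs with multiplicity $n_{i+1}\cdots n_k$, not $n_1\cdots n_{i-1}$; this does not affect the argument.

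The genuine gap is the non-existence half, which is where the whole content of the theorem sits. You reduce it to showing that the restriction of $\mathfrak o$ to an elementary abelian $E\cong(\Z/p)^{L}$ is nonzero, equivalently that the mod-$p$ Euler class of $\wreathOfW{k}{d}{n_1,\dots,n_k}\big|_E$ does not lie in the Fadell--Husseini index of $\wreathOfConf{k}{d}{n_1,\dots,n_k}$ over $E$, and you yourself flag the decisive step (the ``K\"unneth-type bootstrap'') as the main obstacle without supplying it. This step is not routine: the $E$-action on $\wreathOfConf{k-1}{d}{n_1,\dots,n_{k-1}}^{\times n_k}\times \conf(\R^d,n_k)$ is of wreath type, with $E_k$ simultaneously permuting the $n_k$ factors and acting on the last one, so the index of this space is not governed by a naive product/K\"unneth formula, and the single-configuration-space non-vanishing of Karasev--Hubard--Aronov and Blagojevi\'c--Ziegler does not propagate for free; computations of this kind for wreath products of configuration spaces (as in Pali\'c's thesis) require real work. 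As written, the ``if'' direction is therefore a strategy, not a proof. The paper avoids this issue entirely and treats both directions in one cellular computation over the full group: on the model $\cellWreathOfConf{k}{d}{n_1,\dots,n_k}$ the obstruction cocycle of the linear general-position map takes the value $+1$ on the single orbit of top cells (Lemma \ref{lem: cell orientations and c_f(e) = +1}), coboundaries of equivariant cochains take exactly the values in the subgroup of $\Z$ generated by the binomial coefficients $\binom{n_i}{m}$, $1\le m\le n_i-1$, $1\le i\le k$ (Lemma \ref{lem: coboundary of (M-1)-cochain}), and Ram's gcd lemma (Lemma \ref{lem: gcd of binoms}) then yields non-existence precisely when all $n_i$ are powers of one prime. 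To complete your route you would have to actually prove the index/Euler-class non-vanishing for the iterated wreath action; otherwise the cocycle-versus-coboundary computation is the more economical path.
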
 

\medskip
 The proof outlined in Section \ref{section: equivariant obstruction theory} is of the type "if and only if" and gives a complete answer to the question of effectiveness of the introduced CS--TM scheme in the following sense:
 \begin{compactitem}[\quad --]
 	\item If all the numbers $n_1, \dots ,n_k$ are powers of the same prime number, then an $\wreathOfSym{k}{n_1, \dots ,n_k}$-equivariant map of the form \eqref{eq: wreath product map in the introduction} does not exist. Consequently, the CS--TM scheme guaranties existence of an iterated solution of Conjecture \ref{conjecture: nandakumar and ramana rao for general d}.
 	
 	\item If $n_1, \dots ,n_k$ are not all powers of the same prime number, then there exists an $\wreathOfSym{k}{n_1, \dots ,n_k}$-equivariant map of the form \eqref{eq: wreath product map in the introduction}, hence the CS--TM scheme is not able to give any insights into the existence of a solution to Conjecture \ref{conjecture: nandakumar and ramana rao for general d}.
 \end{compactitem}

\medskip
Finally, as a corollary of this approach, we obtain the top equivariant cohomology of the wreath product of configuration spaces with twisted integral coefficients.
 
\medskip
In addition, we include one more proof of the non-existence of the equivariant map in Theorem \ref{theorem: main} and three more proof of their existence, in respective cases. 
We thank the anonymous referee for pointing out two of these proofs.

\medskip
More precisely, in Section \ref{section: equivariant obstruction theory} we reduce the question of non-existence of the map \eqref{eq: wreath product map in the introduction} the non-existence result from the PhD thesis of Pali\'c \cite{Palic18}, which were obtained in collaboration with Blagojevi\'c \& Karasev. 
In Section \ref{section: existence via operad} we prove the existence of an equivariant map \eqref{eq: wreath product map in the introduction} using the structural map of the little cubes operad when $n_1, \dots, n_k$ are not powers of the same prime number. At the end of the section, we produce another argument, similar in spirit, by invoking the existence result of Avvakumov, Karasev \& Skopenkov \cite{AvvakumovKarasevSkopenkov23} for $d \ge 3$, and the existence result of Avvakumov \& Kudrya \cite{AvvakumovKudrya21} when $d=2$ and $n_1 \cdots n_k$ is moreover not twice the prime power.
For further highlights see the recent work of Michael Crabb \cite{crabb2022multivalued}.
In Section \ref{section: existence via ozaydin} we prove existence of the same map by appropriately adapting in detail the trick of \"Ozaydin in \cite{Ozaydin87}.

\medskip
A direct consequence of Theorem \ref{theorem: main} is a new positive solution of the generalised Nandakumar \& Ramana-Rao conjecture.

\medskip
\begin{corollary}[Iterated solution to generalised Nandakumar \& Ramana-Rao]
	Let $K$ be a $d$-dimensional convex body in $\R^d$, let $\mu$ be an absolutely continuous probability measure on $\R^d$, let $n_1, \dots , n_k \ge 2$ be integers, and let $\varphi_1, \dots ,\varphi_{d-1}$ be any $d-1$ continuous functions on the metric space of $d$-dimensional convex bodies in $\R^d$. 
	If $n_1, \dots , n_k$ are all powers of the same prime number, then there exists an iterated partition of $K$ of level $k$ and type $(n_1, \dots, n_k)$ into $n := n_1 \cdots n_k$ convex pieces $K_1, \dots ,K_n$ such that equalities
	\[
		\mu(K_1) = \dots = \mu(K_n)
	\]
	and
	\[
		\varphi_i(K_1) = \dots = \varphi_i(K_n)
	\]
	hold for every $1 \le i \le d-1$.
\end{corollary}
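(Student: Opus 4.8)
The plan is to obtain the corollary as an essentially formal consequence of Theorem~\ref{theorem: main} combined with the configuration space -- test map scheme established in Section~\ref{subsection: cs-tm scheme}. Concretely, I would argue by contradiction. Assume $n_1, \dots, n_k$ are all powers of a single prime $p$, set $n := n_1 \cdots n_k$, and suppose that \emph{no} iterated partition of $K$ of level $k$ and type $(n_1, \dots, n_k)$ into $n$ convex pieces simultaneously equalises the $\mu$-masses and all the values $\varphi_1, \dots, \varphi_{d-1}$.

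First I would invoke the parametrisation map \eqref{eq: parametrisation map, intro}, a $\wreathOfSym{k}{n_1, \dots, n_k}$-equivariant map $\wreathOfConf{k}{d}{n_1, \dots, n_k} \to \EMP_\mu(K, n)$ whose image consists precisely of iterated partitions of the prescribed level and type in which all $n$ pieces already have equal $\mu$-mass; its continuity is guaranteed by Theorem~\ref{theorem: continuity of partitions}. Post-composing with the test map that assigns to a partition $(K_1, \dots, K_n)$ the tuple recording, for each $1 \le i \le d-1$, the deviations of the numbers $\varphi_i(K_1), \dots, \varphi_i(K_n)$ from their averages -- arranged according to the iterated block structure so that the tuple lands in the representation $\wreathOfW{k}{d}{n_1, \dots, n_k}$ -- yields a continuous $\wreathOfSym{k}{n_1, \dots, n_k}$-equivariant map
\[
	\wreathOfConf{k}{d}{n_1, \dots, n_k} \longrightarrow \wreathOfW{k}{d}{n_1, \dots, n_k}.
\]
By construction a zero of this composite map is exactly an iterated solution of the required level and type, so under the contradiction hypothesis the map omits the origin. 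Normalising then produces a $\wreathOfSym{k}{n_1, \dots, n_k}$-equivariant map $\wreathOfConf{k}{d}{n_1, \dots, n_k} \to S(\wreathOfW{k}{d}{n_1, \dots, n_k})$, that is, a map of the form \eqref{eq: wreath product map in the introduction}.

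Finally, since $n_1, \dots, n_k$ are powers of the same prime, the nonexistence half of Theorem~\ref{theorem: main} rules out any such equivariant map, contradicting the previous paragraph. Hence the assumed iterated partition exists, which is the assertion of the corollary. I expect essentially no real obstacle within the corollary itself: all the weight lies in the two ingredients it draws on -- the validity of the CS--TM reduction, which in turn relies on the continuity statement of Theorem~\ref{theorem: continuity of partitions}, and the prime-power nonexistence in Theorem~\ref{theorem: main} -- while the remaining work is the routine bookkeeping of verifying that the zero set of the test map coincides with the set of iterated solutions.
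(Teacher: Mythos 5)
Your argument is correct and is exactly the paper's route: the corollary is deduced by combining the CS--TM scheme theorem of Section~\ref{subsection: cs-tm scheme} (whose proof is precisely the contrapositive composition parametrisation map $\to$ test map $\Phi_k$ $\to$ radial retraction that you describe, with continuity supplied by Theorem~\ref{theorem: continuity of partitions}) with the nonexistence half of Theorem~\ref{theorem: main} in the prime-power case. No substantive difference from the paper's proof.
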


\medskip
Moreover, the proof method, as noted by Karasev \cite[Thm.\,1.6]{Karasev10}, implies also the following result,

\medskip
\begin{corollary}
	Let $K$ be a $d$-dimensional convex body in $\R^d$, $n \ge 2$ an integer,  $n = n_1 \cdots n_k$ a multiplicative decomposition where $n_1, \dots , n_k$ are prime powers, $\mu$ an absolutely continuous probability measure on $\R^d$, and let $\varphi_1, \dots ,\varphi_{d-1}$ be any $d-1$ additive, continuous functions on the metric space of $d$-dimensional convex bodies in $\R^d$. Then there exists an iterated partition of $K$ of level $k$ and type $(n_1, \dots, n_k)$ into $n$ convex pieces $K_1, \dots ,K_n$ such that equalities
	\[
		\mu(K_1) = \dots = \mu(K_n)
	\]
	and
	\[
		\varphi_i(K_1) = \dots = \varphi_i(K_n)
	\]
	hold for every $1 \le i \le d-1$.
\end{corollary}

\medskip
The idea of iterated partitions seems to have appeared first in the context of Gromov's waist of the sphere theorem \cite{Gromov03}, \cite{Memarian11}. 
There, Gromov \cite[Thm.\,4.4.A]{Gromov03} considered partitions of the sphere into $2^i$ pieces, which were parametrised by the wreath products of spheres. 
In order to obtain a slightly different waist of the sphere result, Pali\'c \cite[Thm.\,5.2.5]{Palic18} considered partitions of the sphere into $p^i$ pieces indexed by the $i$th wreath product of the configuration spaces on $p$ points. 
Iterated partitions of Euclidean spaces appeared in the work of Blagojevi\'c \& Sober\'on \cite[Sec.\,2]{BlagojevicSoberon18}, where they were parametrised by the $i$th join of the configuration space.
Moreover, Akopyan, Avvakumov \& Karasev \cite{AkopyanAvvakumovKarasev18} used the method of iterated convex partitions of a convex body into prime number of pieces in every step to prove Theorem \ref{theorem: nandakumar and ramana rao d=2}.
For the purposes of this paper we use a more general version of the wreath product of configuration spaces, in the sense that the number of points of the configuration space in different levels do not need to coincide.

\medskip
\subsection*{Acknowledgements.}
The authors are grateful to the referee of the paper for careful reading and many insightful suggestions which guided our revision and improved the quality of the manuscript significantly.
In addition, Pavle Blagojevi\'c would like to thank Roman Karasev for many intriguing discussion over the years and many relevant comments and suggestions on the current work.

\medskip
\section{The Configuration Space -- Test Map scheme} 
\label{section: cs--tm scheme}
\medskip
In this section we define the spaces needed for developing the configuration space -- test map scheme. The main result of the section is Theorem \ref{theorem: cs-tm scheme} where we show that the existence of an iterated \mbox{solution} to the generalized Nandakumar \& Ramana Rao conjecture follows from the non-existence of a certain \mbox{equivariant} map.

\medskip
Let us denote by $\K^d$ the space of all full dimensional convex bodies in the Euclidean space $\R^d$ endowed with the Hausdorff metric and let $K \in \K^d$. 
As in the statement of Conjecture \ref{conjecture: nandakumar and ramana rao for general d}, let $\mu$ be a probability measure on $\R^d$ which is absolutely continuous with respect to the Lebesgue measure. 

\medskip
For an integer $n \ge 2$ and a convex body $K \in \K^d$, let us say that $(K_1, \dots ,K_n) \in (\K^d)^{\times n}$ form a {\em convex partition of $K$ into $n$ pieces} if
\begin{compactitem}[\quad --]
	\item $K = K_1 \cup \dots \cup K_n$, and
	\item $\interior(K_i) \cap \interior(K_j) = \emptyset$ for each $1 \le i < j \le n$.
\end{compactitem}
Note that, implicitly, $\interior(K_i)\neq \emptyset$, for all $1 \le i  \le n$, because we require that $K_i\in \K^d$.

\medskip
\begin{definition}
	Let $n \ge 2$ be an integer, let $K \in \K^d$ be a convex body, and let $\mu$ be a probability measure on $\R^d$ which is absolutely continuous with respect to the Lebesgue measure. We define $\EMP_{\mu}(K ,n)$ to be the space of all convex partitions of $K$ into $n$ pieces $(K_1, \dots ,K_n) \in \K^d$ such that 
	\[
		\mu(K_1) = \dots = \mu(K_n),
	\]
	endowed with the product Hausdorff metric induced from $(\K^d)^{\times n}$.
\end{definition} 

\medskip
In the case of the plane, that is $d = 2$, the perimeter function induces an $\sym_n$-equivariant map
\begin{equation*}
	\EMP_{\mu}(K,n) \longrightarrow W_n
\end{equation*}
which maps an equal mass partition $(K_1, \dots ,K_n)$ to the vector obtained from $(\perim(K_1), \dots ,\perim(K_n))$ by \mbox{subtracting} the average sum of all coordinates from each individual coordinate. 
Thus, an equal area partition $(K_1, \dots ,K_n) \in \EAP(K,n)$ of $K$ gives a solution to the Nandakumar \& Ramana-Rao conjecture if and only if it is mapped to the origin by the above map. 

\medskip
Analogously, in the general case $d \ge 2$, there exists a map
\begin{equation} \label{eq:EMP_mu to Wn^d-1}
	\EMP_{\mu}(K,n) \longrightarrow W_n^{\oplus d-1}
\end{equation}
induced by the functions $\varphi_1, \dots , \varphi_{d-1}\colon \K^d \longrightarrow \R$ from the statement of Conjecture \ref{conjecture: nandakumar and ramana rao for general d}, which hits the origin in $W_n^{\oplus d-1}$ if and only if there is a solution to the generalised Nandakumar \& Ramana-Rao conjecture. 
See \cite[Sec.\,2]{KarasevHubardAronov14} and \cite[Sec.\,2]{BlagojevicZiegler15} for more details.

\medskip

One of the ways in which the existence of zeros of the map \eqref{eq:EMP_mu to Wn^d-1} has been approached so far was by restricting the attention to a subspace of the domain of the, so called, {\em regular partitions} of $K$. These partitions are the ones which arise from piecewise-linear convex functions.

\medskip
Namely, using the the theory of optimal transport (see \cite[Sec.\,2]{KarasevHubardAronov14} and \cite[Sec.\,2]{BlagojevicZiegler15} for more details), one can parametrise regular convex partitions $(K_1, \dots ,K_n)$ of $K$ into pieces of equal mass by the classical configuration space $F(\R^d,n)$ of $n$ pairwise distinct points in $\R^d$. 
For an illustration see Figure \ref{fig 01}.
More precisely, there exists an $\sym_n$-equivariant map
\begin{equation} \label{eq:F to EMP_mu}
	\conf(\R^d, n) \longrightarrow \EMP_{\mu}(K,n).
\end{equation}
One can think of the parametrisation \eqref{eq:F to EMP_mu} as a prescribed way to assign to any $n$ pairwise distinct points in the plane, called {\em the sites}, a partition of  $K$ into $n$ convex pieces of equal mass. 
In particular, if the image of the composition
\begin{equation} \label{eq: cs-tm scheme, general d}
    \conf(\R^d,n) \longrightarrow \EMP_{\mu}(K,n) \longrightarrow W_n^{\oplus d-1}
\end{equation}
of maps \eqref{eq:F to EMP_mu} and \eqref{eq:EMP_mu to Wn^d-1} contains the origin, there exists a {\em regular} solution to Theorem \ref{theorem: nandakumar and ramana rao d=2}.

\begin{figure}[h]
\includegraphics[scale=1]{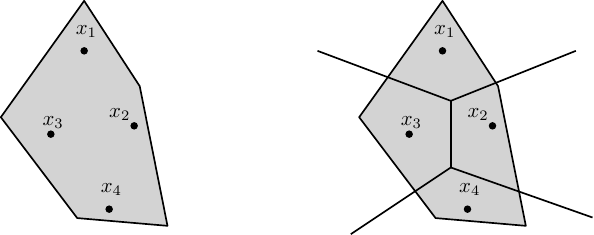}
\caption{\small A point $(x_1,x_2,x_3,x_4)\in \conf(\R^2,4)$ induces a regular partition $(K_1,K_2,K_3,K_4)\in \EMP(K,4)$ of the convex body $K$.}
\label{fig 01}	
\end{figure}

\medskip
In \cite[Thm.\,1.10]{KarasevHubardAronov14} it was shown that whenever $n$ is a power of prime, every $\sym_n$-equivariant map of the form 
\[
    \conf(\R^d,n) \longrightarrow W_n^{\oplus d-1}
\]
hits the origin, thus showing the existence of a solution to the (generalised) Nandakumar \& Ramana-Rao conjecture in this case which is regular. Moreover, in \cite[Thm.\,1.2]{BlagojevicZiegler15} it was shown that outside of the prime power case, there always exists an $\sym_n$-equivariant map $\conf(\R^d,n) \longrightarrow W_n^{\oplus d-1} \setminus \{0\}$. 
This last result used equivariant obstruction theory, and it showed the limits of the above configuration space -- test map scheme proposed by the map \eqref{eq: cs-tm scheme, general d}.

\medskip
Known topological methods are able to give solutions to the Nandakumar \& Ramana-Rao conjecture in the form of regular partitions. 
Since the space $\EMP_{\mu}(K,n)$ of convex equipartitions of $K$ is larger than the set of regular partitions of $K$, a natural question arises: {\em Are there solutions to the Nandakumar \& Ramana-Rao conjecture which are not regular?}

\medskip
In this section we develop a method to parametrise iterated partitions of a given full-dimensional convex body in $\R^d$ into $n$ parts. 
Namely, we parametrise partitions with $k$ iteration by a wreath product of configuration spaces $C_k(d;n_1,..,n_k)$, where $n = n_1 \cdots n_k$ and $n_1, \dots ,n_k \ge 2$.

\medskip
The rest of the section is organised as follows. 
In Section \ref{subsection: iterated partitions: first example} we give an example of the parametrisation in the case when $d=2$ and $k = 2$. 
Then, in Section \ref{subsection: partitions types}, we describe partitions of level $k$ and type $(n_1, \dots ,n_k)$.
 Parametrisation of such partitions by the space $C_k(d;n_1,..,n_k)$ is presented in Section \ref{subsection: cs-tm scheme}, where the configuration space -- test map scheme is set up.

\subsection{Iterated partitions: the first example} \label{subsection: iterated partitions: first example}

We establish a new configuration space -- test map scheme with the intention of identifying a wider class of solutions to Conjecture \ref{conjecture: nandakumar and ramana rao for general d}. Let us first consider the case $d=2$. 
To this end, we parametrise a class of possibly non-regular equal area partitions of $K$ into $n$ convex pieces. 
In this section, we give an example of such a parametrisation for iterated partition of level two in the plane.

\medskip
Let $a \ge 2$ and $b \ge 2$ be natural numbers such that $n=ab$. 
Consider a parametrisation of equal area partitions of $K$ into convex pieces
\begin{equation} \label{eq:F^b x F to EAP, d=2}
	\conf(\R^2, a)^{\times b} \times \conf(\R^2, b) \longrightarrow \EAP(K,n)
\end{equation}
obtained as the composition of the following two maps:
\begin{compactenum} [\quad --]
\item The first map
\[
	\conf(\R^2, a)^{\times b} \times \conf(\R^2, b) \longrightarrow \conf(\R^2, a)^{\times b} \times \EAP(K,b)
\]
is the the product of the identity on $\conf(\R^d, a)^{\times b}$ and the parametrisation \eqref{eq:F to EMP_mu} on $F(\R^2, b)$.

\item The second map
\[
	\conf(\R^2, a)^{\times b} \times \EAP(K,b) \longrightarrow \EAP(K,n)
\]
on each slice $\conf(\R^d, a)^{\times b} \times \{(K_1, \dots ,K_b)\}$ of the domain equals to the product of maps
\[
	\conf(\R^2, a) \longrightarrow\EAP(K_i,a)
\]
for $1 \le i \le b$, followed by the inclusion $\EAP(K_1,a)\times \dots \times \EAP(K_b,a) \subseteq \EAP(K,n)$. 
In Section \ref{section: continuity of partitions} we discuss in more details the continuity of this map.
\end{compactenum}
The map \eqref{eq:F^b x F to EAP, d=2} gives a parametrisation of equal area partition of $K$ into $n$ convex pieces which arise in two iterations. 
Namely, in the first iteration we divide $K$ into $b$ convex pieces $(K_1, \dots ,K_b)$ by a point in $\conf(\R^2, b)$ according to the rule \eqref{eq:F to EMP_mu}, and in the second iteration we again use the rule \eqref{eq:F to EMP_mu} to divide each of the pieces $K_1, \dots, K_b$ by $b$ points in $\conf(\R^2,a)$. 
This is an example of an {\em iterated partition of level two and type (a,b)}.
For an illustration see Figure \ref{fig 00}.

\begin{figure}[h]
\includegraphics[scale=1.2]{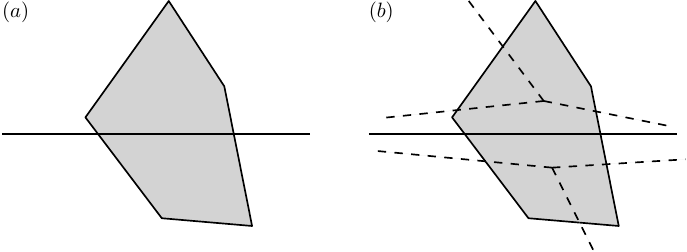}
\caption{\small Partition of convex body of:  $(a)$ level $1$ and type $2$, $(b)$ level $2$ and type $(3,2)$.}
\label{fig 00}	
\end{figure} 

\medskip
In Section \ref{subsection: iterated partitions: first example} we present a more general approach and define {\em iterated partition of level $k$ and type $(n_1, \dots ,n_k)$}, for any multiplicative decomposition $n = n_1 \cdots n_k$ with $n_1, \dots ,n_k \ge 2$. 
The idea is that in the first iteration one divides $K$ into $n_1$ convex pieces of equal area using a point in $\conf(\R^2, n_1)$ according to the rule \eqref{eq:F to EMP_mu}. 
Continuing inductively, for each $2 \le i \le k$, in the $i$th iteration one further divides each of the $n_1\cdots n_{i-1}$ pieces into $n_i$ convex pieces of equal area by as many points in $\conf(\R^2, n_i)$, again according to the rule \eqref{eq:F to EMP_mu}.

\medskip
Let us consider the level two example in the plane once again and discuss the symmetries of the parameter space. 
In this context, the natural group  acting on the domain $\conf(\R^2, a)^{\times b} \times \conf(\R^2, b)$ of the map \eqref{eq:F^b x F to EAP, d=2} is the semi-direct product $(\sym_a)^{\times b} \rtimes \sym_b$, where $\sym_b$ acts on the product $(\sym_a)^{\times b}$ by permuting the factors. 
More precisely, the action is given by
\begin{equation} \label{eq: action of semi-direct product, level two}
	(\tau_1, \dots ,\tau_b; \sigma) \cdot (y_1, \dots , y_b; x) = (\tau_1 \cdot y_{\sigma^{-1}(1)}, \dots ,\tau_a \cdot y_{\sigma^{-1}(b)}; \sigma \cdot x),
\end{equation}
for $(\tau_1, \dots ,\tau_b; \sigma) \in (\sym_a)^{\times b} \rtimes \sym_b$ and $(y_1, \dots , y_b; x) \in \conf(\R^2, a)^{\times b} \times \conf(\R^2, b)$. The action of the symmetric group on the configuration space is assumed to permute the coordinates.

The action \eqref{eq: action of semi-direct product, level two} induces an action on  the codomain of the parametrisation map \eqref{eq:F^b x F to EAP, d=2} in such a way that it becomes $((\sym_a)^{\times b}\rtimes \sym_b)$-equivariant. 
Indeed, an element $(\tau_1, \dots ,\tau_b; \sigma) \in (\sym_a)^{\times b} \rtimes \sym_b$ acts on a partition 
$
	(K_{i,1}, \dots ,K_{i,a})_{i = 1}^b \in \EAP(K,n)
$
by
\[
	(\tau_1, \dots ,\tau_b; \sigma) \cdot (K_{i,1}, \dots ,K_{i,a})_{i = 1}^b = (K_{\sigma^{-1}(i),\tau^{-1}(1)}, \dots ,K_{\sigma^{-1}(i),\tau^{-1}(a)})_{i = 1}^b.
\]
Thus, the full $\sym_n$-symmetry of the partition is broken. 

\medskip
Let us now define a map
\begin{equation} \label{eq: perimeter-induced map, d=2}
	\EAP(K,n) \longrightarrow (W_a)^{\oplus b} \oplus W_b
\end{equation}
which sends a partition
\[
	(K_{i,1}, \dots ,K_{i,a})_{i = 1}^b \in \EAP(K,n)
\]
to a vector which:
\begin{compactitem}[\quad --]
	\item for each $1 \le i \le b$, on the coordinates of the $i$th copy of $W_a$ has the vector obtained from
		\[
			(\perim(K_{i,1}), \dots ,\perim(K_{i,a})) \in \R^a
		\]
		by subtracting the average of all $a$ coordinates;
	\item on the coordinates of $W_b$ has the vector 
		\[
			\Big(\perim(K_{i,1}) + \dots + \perim(K_{i,a})-\frac1b\sum_{1\leq i\leq b} \big(\perim(K_{i,1}) + \dots + \perim(K_{i,a})\big)\Big)_{i=1}^b \in W_b	.	
		\]
\end{compactitem}
In analogy to \eqref{eq: action of semi-direct product, level two}, the actions of $\sym_b$ and $\sym_a$ on $W_b$ and $W_a$, respectively, induce an action of the semi-direct product $(\sym_a)^{\times b}\rtimes \sym_b$ on the vector space $(W_a)^{\oplus b} \oplus W_b$, making the map \eqref{eq: perimeter-induced map, d=2} equivariant with respect to it.

\medskip
Consequently, the $((\sym_a)^{\times b}\rtimes \sym_b)$-equivariant composition
\[
	\conf(\R^2, a)^{\times b} \times \conf(\R^2, b) \longrightarrow \EAP(P,n) \longrightarrow (W_a)^{\oplus b} \oplus W_b
\]
of \eqref{eq:F^b x F to EAP, d=2} and \eqref{eq: perimeter-induced map, d=2} hits the origin, now in $(W_a)^{\oplus b} \oplus W_b$, if and only if there exists an iterated solution of level two and type $(a,b)$. In Theorem \ref{theorem: main} we prove that any $((\sym_a)^{\times b}\rtimes \sym_b)$-equivariant map of the form 
 \[
	\conf(\R^2, a)^{\times b} \times \conf(\R^2, b) \longrightarrow (W_a)^{\oplus b} \oplus W_b
\]
hits the origin if and only if $a$ and $b$ are powers of the same prime number. Thus, the configuration space -- test map scheme is able to give a positive iterated solution of level two and type $(a,b)$ to the Nandakumar \& Ramana-Rao conjecture only in the case when $a$ and $b$ are powers of the same prime.

\subsection{Partition types} \label{subsection: partitions types}

In this section, for integers $k \ge 1$ and $n_1, \dots ,n_k \ge 2$, we define iterated partitions of a given convex body $K \in \K^d$ which are of {\em level $k$ and type $(n_1, \dots ,n_k)$}. 
Furthermore we identify a natural groups of symmetries on such partition spaces.

\medskip
For each convex body $K \in \K^d$ and each integer $n \ge 1$, there exists an $\sym_n$-equivariant map 
\begin{equation} \label{eq:F to EMP}
	\conf(\R^d, n) \longrightarrow \EMP(K,n)
\end{equation}
parametrising regular equal mass partitions of $K$ into $n$ convex pieces. Here $\EMP(K,n) \subseteq (\K^d)^{\times n}$ denotes the metric space of all equal mass partitions of $K$ into $n$ convex pieces. See \cite[Sec.\,2]{KarasevHubardAronov14} or \mbox{\cite[Sec.\,2]{BlagojevicZiegler15}} for more details about this map.

\medskip
As in Section \ref{subsection: iterated partitions: first example}, map \eqref{eq:F to EMP} can be thought of as a prescribed way to divide $K$ into $n$ convex parts of equal mass using $n$ pairwise distinct points in $\R^d$. One can now repeat the process for each of the $n$ parts individually, and divide them regularly into $m \ge 2$ parts, obtaining a partition of $P$ into $nm$ parts, which is not necessarily regular. 
Continuing this process, each of the $nm$ parts can be divided further, etc. 
In this sense, regular partitions \eqref{eq:F to EMP} can be thought of as partitions arising from a single iteration, and in the following definition we introduce a notion of a partition coming from arbitrarily many iterations.
An illustration of the iteration process see Figure \ref{fig 02}.

\medskip
\begin{definition}
	Let $K \in \K^d$ be a convex body.
\begin{compactitem}[\quad --]
	\item (Iterated partition of level one) Let $n \ge 2$ be an integer. Any partition of $K$ in the image of the map \eqref{eq:F to EMP} is said to be \emph{iterated of level $1$ and type $n$}.
	
	\item (Iterated partition of level $k \ge 2$) Suppose $k \ge 2$ and $n_1, \dots ,n_k \ge 2$ are integers. Let
	\[
		(K_1, \dots , K_{n_1 \cdots n_{k-1}}) \in \EMP(K, n_1 \cdots n_{k-1}),
	\]
	be a partition of $K$ of level $k-1$ and type $(n_1, \dots ,n_{k-1})$, and let $(K_{i,1}, \dots ,K_{i,n_k})$, for  $1 \le i \le n_1 \cdots n_{k-1}$, be any partition of $K_i$ of level $1$ and type $n_k$. Then the partition 
	\[
		(K_{i,j}: 1 \le i \le n_1 \cdots n_{k-1},~ 1 \le j \le n_k)
	\]
	of $K$ into $n_1 \cdots n_k$ convex parts of equal mass is said to be \emph{iterated of level $k$ and type $(n_1, \dots ,n_{k})$}.
\end{compactitem}
\end{definition}

\medskip
An iterated partition of level $k$ and type $(n_1, \dots ,n_k)$ arises in $k$ inductive steps. For every iteration $0 \le i \le k-1$, each of the $n_k \cdots n_{k-i+1}$ elements of the partition in the $i$th step contains exactly $n_{k-i}$ elements in the partition from the $(i+1)$st step. Therefore, the collection of all such intermediate parts forms a ranked poset with respect to inclusion. More precisely:
\begin{compactitem}[\quad --]
	\item The maximum of the poset is the full convex body $K$.
	
	\item For each $1 \le i \le k$ the elements of the poset which are of level $i$ are precisely the elements of the partition which arise in the $i$th inductive step.
\end{compactitem}
For fixed parameters $k \ge 1$ and $n_1, \dots ,n_k \ge 2$, the induced posets arising from different convex bodies are isomorphic. Since the posets are supposed to model the type of iterated division process, from now on we will fix one poset of each type.

\begin{figure}[h]
\includegraphics[scale=0.85]{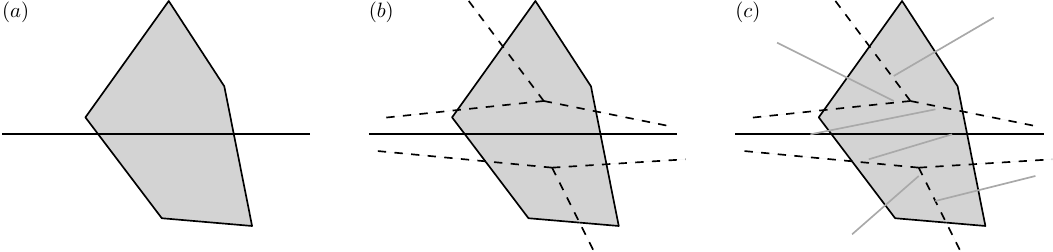}
\caption{\small Partition of convex body of:  $(a)$ level $1$ and type $2$, $(b)$ level $2$ and type $(3,2)$, level three and type $(2,3,2)$.}
\label{fig 02}	
\end{figure} 

\medskip
\begin{definition}
    Let $k \ge 1$ and $n_1, \dots ,n_k \ge 2$ be integers. Let us denote by $P_k(n_1, \dots ,n_k)$ a fixed poset, called {\em poset of level $k$ and type $(n_1, \dots ,n_k)$}, built inductively as follows.
    \begin{compactitem}[\quad --]
        \item (Case $k=1$) Let $P_1(n_1)$ be a poset on $n_1+1$ elements $\pi^0,\pi_1^1, \dots ,\pi_{n_1}^1$ with relations 
        \[
            \pi_1^1, \dots ,\pi_{n_k}^1 \preccurlyeq \pi^0.
        \]
        The maximum $\pi^0$ is at the {\em level zero} and elements $\pi_1^1, \dots ,\pi_{n_1}^1$ are at the {\em level one}.

        \item (Case $k \ge 2$) Let the poset $P_k(n_1, \dots ,n_k)$ be obtained from the poset $P_{k-1}(n_2, \dots ,n_{k})$ by adding $n_1 \cdots n_k$ elements $\pi_1^k, \dots ,\pi_{n_1\cdots n_k}^k$ of {\em level $k$} and relations
        \[
            \pi_{(i-1)n_k + 1}^k, \dots , \pi_{(i-1)n_k + n_k}^k \preccurlyeq \pi^{k-1}_i
        \]
        for each $1 \le i \le n_1 \cdots n_{k-1}$.
    \end{compactitem}
\end{definition}

\medskip
Notice that the Hasse diagram of the poset $P_k(n_1,...,n_k)$ is a rooted tree with $k$ levels, where a vertex of level $0 \le i \le k-1$ has precisely $n_{k-i}$ ``children''.
For an example of posets  $P_k(n_1,...,n_k)$ see Figure \ref{fig 03}.

\begin{figure}[h]
\includegraphics[scale=0.75]{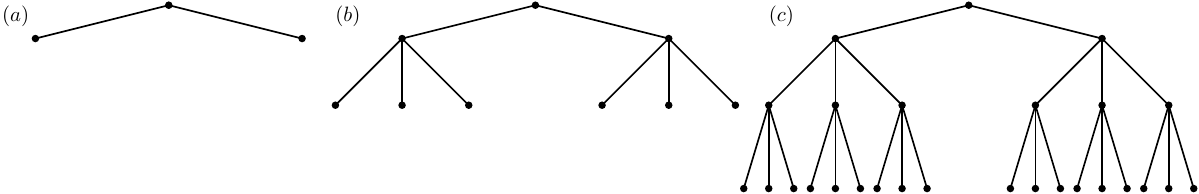}
\caption{\small Hasse diagram of posets:  $(a)$ $P_1(2)$, $(b)$ $P_2(3,2)$, $(c)$ $P_3(3,3,2)$.}
\label{fig 03}	
\end{figure} 

\medskip
For integers $k \ge 1$ and $n_1, \dots ,n_k \ge 2$, let us denote by $\aut_k(n_1, \dots ,n_k)$ the group of automorphisms of the poset $P_k(n_1, \dots ,n_k)$. 

\medskip
In the case $k=1$, it is useful to consider iterated partitions of level $1$ and type $n$ together with the group of symmetries of the poset $P_1(n)$. 
In fact, the group of symmetries in this case is $\aut_1(n) \cong \sym_n$ which coincides with the natural group of symmetries of the configuration space $\conf(\R^d, n)$.
This group was essentially used for identification of partitions of level $1$ and type $n$ which solve the general Nandakumar \& Ramana-Rao conjecture \cite{KarasevHubardAronov14}, \cite{BlagojevicZiegler15}.

\medskip
More generally, the group $\aut_k(n_1, \dots ,n_k)$ naturally acts on the set of partitions of level $k$ and type $(n_1, \dots ,n_k)$, and will be crucial ingredient for the configuration map -- test map scheme set up in Section \ref{subsection: cs-tm scheme} to identify iterated solutions to the general Nandakumar \& Ramana-Rao conjecture which are of level $k$ and type $(n_1, \dots ,n_k)$.

\medskip
\begin{definition}
	Let $k\ge 1$ and $n_1, \dots ,n_k \ge 2$ be integers. We define the group $\wreathOfSym{k}{n_1, \dots ,n_k}$ inductively as follows.
	\begin{compactitem}[\quad --]
		\item For $k=1$ we set $\wreathOfSym{1}{n_1} := \sym_{n_1}$.
		
		\item For $k \ge 2$ let
		\[
			\wreathOfSym{k}{n_1, \dots ,n_k} := \wreathOfSym{k-1}{n_1, \dots ,n_{k-1}}^{\times n_k} \rtimes \sym_{n_k}
		\]
		be the semi-direct product; for a definition of semi-direct product consult for example \cite[Sec.\,5.5]{DummitFoote04}. The symmetric group $\sym_{n_k}$ acts on the product $\wreathOfSym{k-1}{n_1, \dots ,n_{k-1}}^{\times n_k}$ by
		\[
		\sigma \cdot (\Sigma_1, \dots ,\Sigma_{n_k}) = (\Sigma_{\sigma^{-1}(1)}, \dots ,\Sigma_{\sigma^{-1}(n_k)}),
		\]
		for $\sigma \in \sym_n$ and $(\Sigma_1, \dots ,\Sigma_{n_k}) \in \wreathOfSym{k-1}{n_1, \dots ,n_{k-1}}^{\times n_k}$.
	\end{compactitem}
\end{definition}

\medskip
Written down in more details for $k \ge 2$, the operation in the wreath product $\wreathOfSym{k}{n_1, \dots ,n_k}$ is given inductively by
\[
	(\Sigma_1, \dots ,\Sigma_{n_k};\sigma) \cdot (\Theta_1, \dots ,\Theta_{n_k}; \theta) = (\Sigma_1 \cdot \Theta_{\sigma^{-1}(1)}, \dots ,\Sigma_{n_k} \cdot \Theta_{\sigma^{-1}(n_k)}; \sigma \theta),
\]
where $(\Sigma_1, \dots ,\Sigma_{n_k};\sigma), (\Theta_1, \dots ,\Theta_{n_k}; \theta) \in  \wreathOfSym{k-1}{n_1, \dots ,n_{k-1}}^{\times n_k} \rtimes \sym_{n_k} = \wreathOfSym{k}{n_1, \dots ,n_k}$.

\medskip
\begin{lemma} \label{lem: aut = wreathSym}
	Let $k\ge 1$ and $n_1, \dots ,n_k \ge 2$ be integers. Then, there is a group isomorphism
	\[
		\aut_k(n_1, \dots ,n_k) ~\cong~ \wreathOfSym{k}{n_1, \dots ,n_k}.
	\]
\end{lemma}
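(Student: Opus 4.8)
The plan is to prove the isomorphism by induction on $k$, after first reducing the computation of $\aut_k(n_1,\dots,n_k)$ to a computation of automorphisms of a rooted tree. Concretely, I would first observe that every automorphism $\alpha$ of the poset $P_k(n_1,\dots,n_k)$ preserves levels: the unique maximum $\pi^0$ is fixed, $P_k(n_1,\dots,n_k)$ is graded (all maximal chains have $k+1$ elements, one of each level), so the level of an element is its rank and hence is order-theoretic; equivalently one shows by downward induction that an element has level $i$ precisely if it is covered by a level-$(i-1)$ element. Since $\alpha$ then preserves the covering relation and the root, it is exactly an automorphism of the rooted tree which is the Hasse diagram of $P_k(n_1,\dots,n_k)$, and conversely every rooted-tree automorphism is a poset automorphism because in this tree $x\preccurlyeq y$ holds iff $y$ lies on the path from $x$ to the root. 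By the Hasse-diagram description recorded after the definition of $P_k(n_1,\dots,n_k)$, this rooted tree has root of degree $n_k$, and the full subtree below each child of the root is the Hasse diagram of $P_{k-1}(n_1,\dots,n_{k-1})$.

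Now the induction. The base case $k=1$ is immediate: an automorphism of $P_1(n_1)$ fixes $\pi^0$ and permutes $\pi^1_1,\dots,\pi^1_{n_1}$ arbitrarily, and each such permutation extends uniquely, so $\aut_1(n_1)\cong\sym_{n_1}=\wreathOfSym{1}{n_1}$. For $k\ge 2$, let $T$ denote the rooted tree of $P_k(n_1,\dots,n_k)$, let $v_1,\dots,v_{n_k}$ be the children of its root, and let $T_j$ be the full subtree below $v_j$; fix once and for all isomorphisms of each $T_j$ with the rooted tree $S$ of $P_{k-1}(n_1,\dots,n_{k-1})$. Every $\alpha\in\aut_k(n_1,\dots,n_k)$ permutes $T_1,\dots,T_{n_k}$, yielding a homomorphism $\rho\colon\aut_k(n_1,\dots,n_k)\to\sym_{n_k}$ whose kernel --- the automorphisms fixing each $v_j$ --- is the direct product of the automorphism groups of the $T_j$, hence by the inductive hypothesis and the fixed identifications equals $\wreathOfSym{k-1}{n_1,\dots,n_{k-1}}^{\times n_k}$. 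Moreover $\rho$ splits: given $\sigma\in\sym_{n_k}$, one uses the fixed identifications to send $T_j$ rigidly onto $T_{\sigma(j)}$; computing the conjugation action of this section on $\ker\rho$ one recovers exactly the coordinate-permutation action of $\sym_{n_k}$ on $\wreathOfSym{k-1}{n_1,\dots,n_{k-1}}^{\times n_k}$ appearing in the definition. Therefore $\aut_k(n_1,\dots,n_k)\cong\wreathOfSym{k-1}{n_1,\dots,n_{k-1}}^{\times n_k}\rtimes\sym_{n_k}=\wreathOfSym{k}{n_1,\dots,n_k}$, completing the induction.

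I expect the only real work here to be bookkeeping rather than substance. The two points that need genuine care are: matching the recursive shape of $P_k(n_1,\dots,n_k)$ with the recursion defining $\wreathOfSym{k}{n_1,\dots,n_k}$ --- in particular checking that the subtree below a root-child is $P_{k-1}(n_1,\dots,n_{k-1})$ with exactly the right parameters --- and verifying that the chosen section of $\rho$ is a homomorphism whose conjugation action on $\ker\rho$ is literally the permutation action occurring in the semidirect product (getting the $\sigma$-versus-$\sigma^{-1}$ conventions right), so that one obtains the semidirect product $\wreathOfSym{k-1}{n_1,\dots,n_{k-1}}^{\times n_k}\rtimes\sym_{n_k}$ on the nose and not merely an abstract group of the same order. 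Everything else is a routine induction on $k$.
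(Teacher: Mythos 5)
Your proposal is correct and follows essentially the same route as the paper: induction on $k$, with an automorphism of $P_k(n_1,\dots,n_k)$ decomposed into the permutation it induces on the root's $n_k$ level-one children and the automorphisms of their principal order ideals, each identified with $P_{k-1}(n_1,\dots,n_{k-1})$, which is exactly the paper's explicit bijection with $\wreathOfSym{k-1}{n_1,\dots,n_{k-1}}^{\times n_k}\rtimes\sym_{n_k}$, your kernel/section formulation being only a repackaging. The one point you flag for care --- that the root has degree $n_k$ and the subtree below each root child carries the parameters $(n_1,\dots,n_{k-1})$ --- is the same reading the paper's proof adopts (via its Hasse-diagram remark; a literal parsing of the recursive definition of $P_k$ would instead give root degree $n_1$ and subtrees of type $(n_2,\dots,n_k)$, i.e.\ the reversed tuple), so your argument is consistent with the paper's intended conventions.
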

\begin{proof}
	The proof is by induction on $k \ge 1$, because the definitions of relevant objects were given inductively. 
	
	\medskip
	For $k = 1$ we have that $\aut_1(n_1) \cong \sym_{n_1}$. 
	Indeed, $\sigma \in \sym_{n_1}$ acts on the level one elements of $P_1(n_1)$ by the rule $\pi_i^1 \longmapsto \pi_{\sigma(i)}^1$ for each $1 \le i \le n$.
	
	\medskip
	Suppose now that $k \ge 2$. 
	By the induction hypothesis, it is enough to show the existence of an isomorphism
	\begin{equation} \label{eq: inductive iso for aut}
		\aut_{k-1}(n_1, \dots ,n_{k-1})^{\times n_k} \rtimes \sym_n \xrightarrow{~\cong~} \aut_k(n_1, \dots ,n_k).
	\end{equation}
	One way to construct an isomorphism is to map an element 
	\[
		(\phi_1, \dots ,\phi_n; \sigma) \in \aut_{k-1}(n_1, \dots ,n_{k-1})^{\times n_k} \rtimes \sym_n
	\]
	by a composition of the following two maps.
	\begin{compactenum}[\quad (1)]
        \item The first one permutes the level one elements $\pi^1_i$, together with their respective principal order ideals $\langle \pi^1_i \rangle := P_k(n_1, \dots ,n_k)_{\preccurlyeq \pi_i^1} \cong P_{k-1}(n_1, \dots , n_{k-1})$, by the rules		
        \[
			\pi^1_i \longmapsto \pi^1_{\sigma(i)} ~~ \text{ and } ~~ \langle \pi^1_i \rangle \xrightarrow{~ \cong ~} \langle \pi_{\sigma(i)}^1 \rangle,
		\] 
		for each $1 \le i \le n_k$, where the (order) ideals are mapped in the canonical way preserving the order of the indices on each level.
		\item The second one applies the automorphism $\phi_i$ to the (order) ideal $\langle \pi_i^1 \rangle$ via the unique isomorphism $\langle \pi_i^1 \rangle \cong P_{k-1}(n_1, \dots ,n_{k-1})$ which preserves the order of the indices, for each $1 \le i \le n_k$.
	\end{compactenum}
	The map \eqref{eq: inductive iso for aut} is a bijection, since each element of $\aut_k(n_1, \dots ,n_k)$ is charaterised by the permutation of its level-one elements and the automorphism of their respective principle (order) ideals.
	 An illustration of the isomorphism \eqref{eq: inductive iso for aut} is presented Figure \ref{fig 04}.
\end{proof}

\begin{figure}[h]
\includegraphics[scale=0.77]{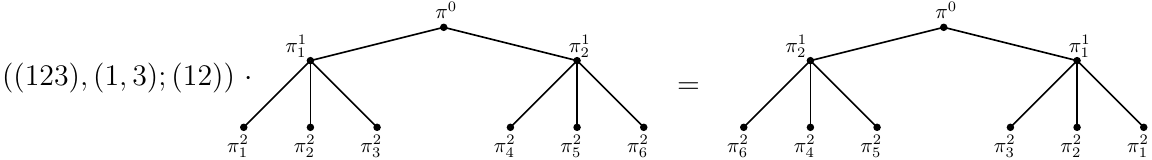}
\caption{\small Action of the element $((123),(13);(12))\in\sym_3$.}
\label{fig 04}	
\end{figure} 

\medskip
\begin{definition} \label{def: wreath product action}
	Assume that a group $G$ acts on the set $X$ and that the symmetric group $\sym_{n}$ acts on the set $Y$. 
	Let $G^{\times n} \rtimes \sym_n$ be the semi-direct product, where $\sym_n$ acts on the product $G^{\times n}$ by permuting the coordinates. The induced action of $G^{\times n} \rtimes \sym_n$ on the set $X^{\times n} \times Y$, given by
	\[
		(g_1, \dots , g_n; \sigma) \cdot (x_1, \dots , x_n; y) := (g_1 \cdot x_{\sigma^{-1}(1)}, \dots , g_n \cdot x_{\sigma^{-1}(n)}; \sigma \cdot y)
	\]
	for each $(g_1, \dots , g_n; \sigma) \in G^{\times n} \rtimes \sym_n$ and $(x_1, \dots , x_n; y) \in X^{\times n} \times Y$, is called {\em the wreath product action}.
\end{definition}

\subsection{Configuration Space -- Test Map scheme} \label{subsection: cs-tm scheme}

In the previous section we described partitions of $K$ into $n := n_1 \cdots n_k$ parts of level $k$ and type $(n_1, \dots ,n_k)$. 
Hence, all such partitions are contained in the space $\EMP(K, n)$, but is it possible to continuously parametrise them by a topological space? 

\medskip
In this section we first give a positive answer to the previous question, and then develop the related configuration space -- test map scheme for finding  solutions to Conjecture \ref{conjecture: nandakumar and ramana rao for general d} which are of level $k$ and type $(n_1, \dots ,n_k)$.

\medskip
\begin{definition} [Wreath product of Configuration Spaces]
Let $d, k\ge 1$ and $n_1, \dots ,n_k \ge 2$ be integers. 
The space $\wreathOfConf{k}{d}{n_1, \dots ,n_k}$ is defined inductively as follows.
	\begin{compactitem}[\quad --]
		\item For $k=1$ set $\wreathOfConf{1}{d}{n_1} := \conf(\R^d, n_1)$.
		
		\item For $k \ge 2$ let
		\[
			\wreathOfConf{k}{d}{n_1, \dots ,n_k} := \wreathOfConf{k-1}{d}{n_1, \dots ,n_{k-1}}^{\times n_k} \times \conf(\R^d, n_k).
		\]
	\end{compactitem}
\end{definition}

\medskip
The action of the symmetric group $\sym_n$ on the configuration space $\conf(\R^d, n)$ induces an action of the group $\wreathOfSym{k}{n_1, \dots ,n_k}$ on $\wreathOfConf{k}{d}{n_1, \dots ,n_k}$. 
Let us introduce this action explicitly by induction on $k \ge 1$. 
\begin{compactitem}[\quad --]
		\item For $k=1$ the group $\wreathOfSym{1}{n} = \sym_{n}$ acts on $\wreathOfConf{1}{d}{n} = \conf(\R^d, n)$ by permuting the coordinates
		\[
			\sigma \cdot (x_1, \dots, x_{n}) = (x_{\sigma^{-1}(1)}, \dots, x_{\sigma^{-1}(n)}),
		\]
		where $\sigma \in \sym_n$ and $(x_1,\dots,x_n) \in \conf(\R^d, n)$.
		\item For $k \ge 2$ and $n_1, \dots ,n_k \ge 2$ the group $\wreathOfSym{k}{n_1, \dots ,n_k}$ acts on the space $\wreathOfConf{k}{d}{n_1, \dots ,n_k}$ by the rule
		\[
			(\Sigma_1, \dots ,\Sigma_{n_k};\sigma) \cdot (X_1, \dots ,X_{n_k}; x) = (\Sigma_1 \cdot X_{\sigma^{-1}(1)}, \dots ,\Sigma_{n_k} \cdot X_{\sigma^{-1}(n_k)}; \sigma \cdot x),
		\]
		where
		\[
			(\Sigma_1, \dots ,\Sigma_{n_k};\sigma) \in \wreathOfSym{k-1}{n_1, \dots ,n_{k-1}}^{\times n_k} \rtimes \sym_{n_k} = \wreathOfSym{k}{n_1, \dots ,n_k}
		\]
		and
		\[
			(X_1, \dots ,X_{n_k}; x) \in \wreathOfConf{k-1}{d}{n_1, \dots ,n_{k-1}}^{\times n_k} \times \conf(\R^d, n_k) = \wreathOfConf{k}{d}{n_1, \dots ,n_k}.
		\]
\end{compactitem}
In similar situations, where the action is defined analogously, we will call it simply {\em the wreath product action}, in the light of Definition \ref{def: wreath product action}.

\medskip
Next, for a given convex body $K \in \K^d$, we define a parametrisation of the iterated equal mass convex  partitions of $K$.

\begin{proposition}
	[Parametrisation of iterated partitions] \label{proposition: parametrisation of iterated partitions}
	Let $k \ge 1$, $d\ge 1$ and $n_1, \dots ,n_k \ge 2$ be integers, and let $K \in \K^d$ be a $d$-dimensional convex body. Then, there exists a continuous $\wreathOfSym{k}{n_1, \dots ,n_k}$-equivariant map
	\begin{equation*}
		\wreathOfConf{k}{d}{n_1, \dots ,n_k} \longrightarrow \EMP_{\mu}(K, n_1 \cdots n_k).
	\end{equation*}
\end{proposition}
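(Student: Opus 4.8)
The plan is to prove, by induction on $k$, a statement slightly stronger than the proposition, which also records continuous dependence on the body: for all $k \ge 1$ there is a continuous map
\[
	\Phi_k \colon \K^d \times \wreathOfConf{k}{d}{n_1, \dots ,n_k} \longrightarrow (\K^d)^{\times n}, \qquad n := n_1 \cdots n_k,
\]
such that $\Phi_k(K,\xi) \in \EMP_{\mu}(K,n)$ for every $K \in \K^d$ and every $\xi \in \wreathOfConf{k}{d}{n_1, \dots ,n_k}$, and such that for each fixed $K$ the map $\Phi_k(K,-)$ is equivariant with respect to the wreath product action on $\wreathOfConf{k}{d}{n_1, \dots ,n_k}$ and the action of $\wreathOfSym{k}{n_1, \dots ,n_k}$ on $(\K^d)^{\times n}$ that permutes the $n$ coordinates through the identification of $\{1, \dots ,n\}$ with the set of leaves of the tree $P_k(n_1, \dots ,n_k)$ and the isomorphism of Lemma \ref{lem: aut = wreathSym}. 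Restricting to a fixed $K$ and corestricting to the subspace $\EMP_{\mu}(K,n) \subseteq (\K^d)^{\times n}$ then yields the proposition.

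For the base case $k = 1$ I would take $\Phi_1(K,x) := (K_1(x), \dots ,K_{n_1}(x))$, the regular equal-mass partition produced by the optimal-transport map \eqref{eq:F to EMP}; it is $\sym_{n_1}$-equivariant, and the fact that it is jointly continuous in $(K,x)$ is precisely Theorem \ref{theorem: continuity of partitions}. For the inductive step, given $(K; X_1, \dots ,X_{n_k}; x)$, I would first use $\Phi_1$ to split $K$ into $n_k$ convex pieces $(K_1, \dots ,K_{n_k}) := \Phi_1(K,x)$ of equal $\mu$-mass $\mu(K)/n_k$, and then set
\[
	\Phi_k(K; X_1, \dots ,X_{n_k}; x) := \big( \Phi_{k-1}(K_1, X_1), \dots , \Phi_{k-1}(K_{n_k}, X_{n_k}) \big),
\]
concatenated according to the fixed leaf-ordering of $P_k(n_1, \dots ,n_k)$. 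To see that the output lies in $\EMP_{\mu}(K,n)$ one checks that the $n_k$ blocks cover $K$ because $\bigcup_i K_i = K$; that pieces from distinct blocks have disjoint interiors because $\interior(L) \subseteq \interior(K_i)$ for any convex body $L \subseteq K_i$ while $\interior(K_i) \cap \interior(K_{i'}) = \emptyset$ for $i \ne i'$; and that every final piece has mass $\mu(K_i)/(n_1 \cdots n_{k-1}) = \mu(K)/n$, since the first split produced pieces of equal $\mu$-mass.

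Continuity of $\Phi_k$ would follow by writing it as the composition of the continuous map $(K,x) \mapsto (K_1, \dots ,K_{n_k})$ of Theorem \ref{theorem: continuity of partitions} with the $n_k$-fold product of the (inductively continuous) map $\Phi_{k-1}$; here it is essential that $\Phi_{k-1}$ be jointly continuous in the body argument, since the pieces $K_i$ themselves vary with $x$, which is exactly why Theorem \ref{theorem: continuity of partitions} is needed in the strong form. For equivariance, write $g = (\Sigma_1, \dots ,\Sigma_{n_k}; \sigma)$ and $\xi = (X_1, \dots ,X_{n_k}; x)$; the first split applied to $(K, \sigma \cdot x)$ returns $(K_{\sigma^{-1}(1)}, \dots ,K_{\sigma^{-1}(n_k)})$ by $\sym_{n_k}$-equivariance of $\Phi_1$, so the $i$-th block of $\Phi_k(K, g \cdot \xi)$ equals $\Phi_{k-1}(K_{\sigma^{-1}(i)}, \Sigma_i \cdot X_{\sigma^{-1}(i)}) = \Sigma_i \cdot \Phi_{k-1}(K_{\sigma^{-1}(i)}, X_{\sigma^{-1}(i)})$ by the inductive equivariance, which is exactly the $i$-th block of $g \cdot \Phi_k(K, \xi)$ for the leaf-permutation action.

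I do not expect a genuine obstacle: the one substantial point — that the regular partition depends continuously on the ambient body and not only on the configuration of sites — is isolated in Theorem \ref{theorem: continuity of partitions}, which is assumed here. Granting that, the remaining work is bookkeeping: phrasing the induction so that joint continuity in $K$ is propagated, and carefully matching the wreath product action on $\wreathOfConf{k}{d}{n_1, \dots ,n_k}$ with the leaf-permutation action on $(\K^d)^{\times n}$ via Lemma \ref{lem: aut = wreathSym}.
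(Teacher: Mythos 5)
Your proposal is correct and follows essentially the same route as the paper: the paper also proves the strengthened statement with the extra $\K^d$-factor (its map $p_k$ on $\wreathOfConf{k}{d}{n_1,\dots,n_k} \times \K^d$), constructs it by the same induction whose step composes the level-one split of Theorem \ref{theorem: continuity of partitions} with the $n_k$-fold product of the level-$(k-1)$ map, and invokes the same joint continuity in the body. Your explicit verification that the output lies in $\EMP_{\mu}(K,n)$ and the equivariance computation are just slightly more detailed versions of what the paper leaves implicit.
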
 
\begin{proof}
	As already discussed in the case when $\mu$ is the Lebesgue measure itself, for each integer $n \ge 2$ and $K \in \K^d$, due to the theory of optimal transport (see \cite[Sec.\,2]{KarasevHubardAronov14} and \cite[Sec.\,2]{BlagojevicZiegler15}) there exists an $\sym_n$-equivariant map
	\begin{equation} \label{eq: slice-wise partition map}
		\conf(\R^d, n) \times \{K\} \longrightarrow \EMP_{\mu}(K, n), ~~~ (x, K) \longmapsto (K_1(x), \dots ,K_n(x)),
	\end{equation}
	where $(K_1(x), \dots ,K_n(x))$ represents the unique regular equipartition of $K$ with respect to the measure $\mu$ and sites $x = (x_1, \dots ,x_n) \in \conf(\R^d, n)$. In Lemma \ref{lemma: existence of unique weights} we provide a topological proof of the existence of the map \eqref{eq: slice-wise partition map}.
	
	\medskip
	One can consider \eqref{eq: slice-wise partition map} as the map given on slices $\conf(\R^d, n) \times \{K\}\subseteq \conf(\R^d, n) \times \K^d$ and construct an $\sym_n$-equivariant map with enlarged domain
	\begin{equation} \label{eq: partition map with sites}
		p: \conf(\R^d, n) \times \K^d \longrightarrow (\K^d)^{\times n}, ~~~~~ (x, K) \longmapsto (K_1(x), \dots ,K_n(x)) \in \EMP_{\mu}(K, n),
	\end{equation}
	where the symmetric group acts on the $\K^d$-coordinate of the domain trivially, and on the codomain by permuting the coordinates. Continuity of the map \eqref{eq: partition map with sites} is proved in Theorem \ref{theorem: continuity of partitions}, and it, in particular, implies continuity of the map \eqref{eq: slice-wise partition map}.
	
	\medskip
	Assuming the existence of a map \eqref{eq: partition map with sites}, we prove something a bit stronger than the statement of the proposition. Namely, we show existence of an $\wreathOfSym{k}{n_1, \dots ,n_k}$-equivariant map
	\begin{equation} \label{eq: p_k}
		p_k: \wreathOfConf{k}{d}{n_1, \dots ,n_k} \times \K^d \longrightarrow (\K^d)^{\times n_1 \cdots n_k}
	\end{equation}
	which on each slice 
	\[
		  \wreathOfConf{k}{d}{n_1, \dots ,n_k} \times \{K\} \subseteq \wreathOfConf{k}{d}{n_1, \dots ,n_k} \times \K^d
	\]
	equals to the desired $\wreathOfSym{k}{n_1, \dots ,n_k}$-equivariant map
	\begin{equation*}
	 \wreathOfConf{k}{d}{n_1, \dots ,n_k} \times \{K\} \longrightarrow \EMP_{\mu}(K, n_1 \cdots n_k) \subseteq (\K^d)^{\times n_1 \cdots n_k}
	\end{equation*}
	from the statement of this proposition. 
	The $\wreathOfSym{k}{n_1, \dots ,n_k}$-action on the $\K^d$-coordinate of the domain of the map \eqref{eq: p_k} is trivial, and the action on the codomain is given inductively by the wreath product action (see Definition \ref{def: wreath product action}).
	
	\medskip
	The construction of a map $p_k$ is done by induction on $k \ge 1$. For the base case $k = 1$, the map \eqref{eq: partition map with sites} has the desired restriction properties \eqref{eq: slice-wise partition map}, hence we set $p_1 := p$. 
	Let now $k \ge 2$ and set ${\bf n} := (n_1, \dots , n_k)$ and ${\bf n}' := (n_1, \dots ,n_{k-1})$ to simplify the notation. 
	Assume moreover there exists an $\wreathOfSym{k-1}{{\bf n}'}$-equivariant parametrisation map
	\begin{equation} \label{eq: p_{k-1}}
		p_{k-1}: \wreathOfConf{k-1}{d}{{\bf n}'} \times \K^d \longrightarrow (\K^d)^{\times n_1 \cdots n_{k-1}}
	\end{equation}
	with the required slice-wise restrictions. The map $p_k$ is constructed from two ingredients. 
	\begin{compactenum}[\quad \rm (1)]
		\item Let the $\wreathOfSym{k}{\bf n}$-equivariant map
	\begin{equation} \label{eq: first map in the partition map}
		\id \times p_{1}:\wreathOfConf{k-1}{d}{{\bf n}'}^{\times n_k} \times \conf(\R^d, n_k) \times \K^d \longrightarrow \wreathOfConf{k-1}{d}{{\bf n}'}^{\times n_k} \times (\K^d)^{\times n_k}
	\end{equation}
	be the product of the identity on $\wreathOfConf{k-1}{d}{{\bf n}'}^{\times n_k}$ and the map \eqref{eq: partition map with sites} with value $n = n_k$. The action of the group $\wreathOfSym{k}{{\bf n}} = \wreathOfSym{k-1}{{\bf n}'}^{\times n_k} \rtimes \sym_{n_k}$ on the codomain of the map \eqref{eq: first map in the partition map} is induced by the product action of $\wreathOfSym{k-1}{{\bf n}'}^{\times n_k}$ on $\wreathOfConf{k-1}{d}{{\bf n}'}^{\times n_k}$, and the action of $\sym_{n_k}$ on the product $(\K^d)^{\times n_k}$ which permutes the coordinates. Notice that the map \eqref{eq: first map in the partition map} restricts to the slice-wise equivariant map
	\[
		\wreathOfConf{k}{d}{{\bf n}} \times \{K\} \longrightarrow \wreathOfConf{k-1}{d}{{\bf n}'}^{\times n_k} \times \EMP_{\mu}(K, n_k) \subseteq \wreathOfConf{k-1}{d}{{\bf n}'}^{\times n_k} \times (\K^d)^{\times n_k}
	\]
	for each $K \in \K^d$.
		\item The $n_k$-fold product $(p_{k-1})^{\times n_k}$ of the map \eqref{eq: p_{k-1}} induces the $\wreathOfSym{k}{{\bf n}}$-equivariant map
		\begin{equation} \label{eq: second map in the partition map}
			\wreathOfConf{k-1}{d}{{\bf n}'}^{\times n_k} \times (\K^d)^{\times n_k} \xrightarrow{~(p_{k-1})^{\times n_k}~} ((\K^d)^{\times n_1 \cdots n_{k-1}})^{\times n_k}.
		\end{equation}
		From the induction hypothesis it follows that the map \eqref{eq: second map in the partition map} restricts to the slise-wise equivariant map
		\[
			\wreathOfConf{k-1}{d}{{\bf n}'}^{\times n_k} \times \{(K_1, \dots ,K_{n_k})\} \longrightarrow \prod_{i = 1}^{n_k} \EMP_{\mu}(K_i, n_1\cdots n_{k-1}) \subseteq \prod_{i = 1}^{n_k} (\K^d)^{\times n_1 \cdots n_{k-1}}
		\]
		for each $(K_1, \dots ,K_{n_k}) \in (\K^d)^{\times n_k}$.
	\end{compactenum}
	Having these two ingredients, we can define the map \eqref{eq: p_k} as the $\wreathOfSym{k}{n_1, \dots ,n_k}$-equivariant composition
	\begin{equation*}
			p_k: \wreathOfConf{k}{d}{\bf n} \times \K^d \xrightarrow{~\eqref{eq: first map in the partition map}~} \wreathOfConf{k-1}{d}{{\bf n}'}^{\times n_k} \times (\K^d)^{\times n_k} \xrightarrow{~\eqref{eq: second map in the partition map}~} (\K^d)^{\times n_1 \cdots n_{k}}.
	\end{equation*}
	The slice-wise restrictions of the maps \eqref{eq: first map in the partition map} and \eqref{eq: second map in the partition map} imply that the restriction of the map $p_k$ factors as
	\[
		\wreathOfConf{k}{d}{\bf n} \times \{K\} \longrightarrow \EMP_{\mu}(K, n_1 \cdots n_k)
	\]
	for each $K \in \K^d$, which finishes the proof.
\end{proof}

\medskip
In order to proceed with setting up the configuration space -- test map scheme, we first define an $\wreathOfSym{k}{n_1, \dots ,n_k}$-representation which is used as a codomain of the test map of the scheme.

\medskip
For each $n \ge 1$ let $W_n := \{ x \in \R^n:~ x_1 + \dots + x_n = 0 \}$. It might be convenient to consider $W_n$ to be a space of row vectors with zero coordinate sums.

\medskip
\begin{definition}
Let $d \ge 1$, $k\ge 1$ and $n_1, \dots ,n_k \ge 2$ be integers.
The vector space $\wreathOfW{k}{d-1}{n_1, \dots ,n_k}$ is defined inductively as follows.
	\begin{compactitem}[\quad --]
		\item For $k=1$ set 
		\[
		\wreathOfW{1}{d-1}{n_1} := W_{n_1}^{\oplus d-1} \cong \{(z_1, \dots ,z_{n_1}) \in (\R^{d-1})^{\oplus n_1}:~ z_1 + \dots + z_{n_1} = 0\}.
		\]
		
		\item For $k \ge 2$ let
		\[
			\wreathOfW{k}{d-1}{n_1, \dots ,n_k} := \wreathOfW{k-1}{d-1}{n_1, \dots ,n_{k-1}}^{\oplus n_k} \oplus W_{n_k}^{\oplus d-1}.
		\]
	\end{compactitem}
\end{definition}

\medskip
The dimension of the vector space in the case $k=1$ is $\dim \wreathOfW{1}{d-1}{n_1} = (d-1)(n_1-1)$. 
From the inductive relation
\[
	\dim \big(\wreathOfW{k}{d-1}{n_1, \dots ,n_k}\big) = n_k \cdot \dim \big( \wreathOfW{k-1}{d-1}{n_1, \dots ,n_{k-1}}\big) + (d-1)(n_k-1),
\]
where $k \ge 2$, it follows that
\[
	\dim \big(\wreathOfW{k}{d-1}{n_1, \dots ,n_k}\big) = (d-1)(n_1 \cdots n_k - 1).
\] 
In the analogy to the case of $\wreathOfConf{k}{d}{n_1, \dots , n_k}$, the action of $\wreathOfSym{k}{n_1, \dots, n_k}$ on $\wreathOfW{k}{d-1}{n_1, \dots ,n_k}$ is defined by the wreath product action; consult Definition \ref{def: wreath product action}.

\medskip
Let $S(\wreathOfW{k}{d-1}{n_1, \dots ,n_k})$ be the unit sphere of $\wreathOfW{k}{d-1}{n_1, \dots ,n_k}$.
Now, we state the main theorem of this section which establishes the configuration space -- test map scheme.

\medskip
\begin{theorem} \label{theorem: cs-tm scheme}
	Let $k \ge 1$, $d\ge 1$ and $n_1, \dots ,n_k \ge 2$ be integers. 
	Let $K \in \K^d$ be a $d$-dimensional convex body and $n := n_1 \cdots n_k$. 
	If there is no $\wreathOfSym{k}{n_1, \dots ,n_k}$-equivariant map of the form
	\begin{equation} \label{eq: wreath to sphere of wreath map}
		\wreathOfConf{k}{d}{n_1, \dots ,n_k} \longrightarrow S(\wreathOfW{k}{d-1}{n_1, \dots ,n_k}),
	\end{equation}
	then there exists a solution to Conjecture \ref{conjecture: nandakumar and ramana rao for general d} for the convex body $K$ which is of level $k$ and type $(n_1, \dots ,n_k)$.
\end{theorem}
\begin{proof}
	Let us first construct an $\wreathOfSym{k}{n_1, \dots ,n_k}$-equivariant map
	\begin{equation} \label{eq: EMP to wreathOfW}
		\Phi_k:\EMP_{\mu}(K ,n_1 \cdots n_k) \longrightarrow \wreathOfW{k}{d-1}{n_1, \dots ,n_k},
	\end{equation}
	which tests whether a convex equipartition of $K$ into $n_1 \cdots n_k$ parts is a solution to Conjecture \ref{conjecture: nandakumar and ramana rao for general d}. 
	The map $\Phi_k$ will be induced from the functions $\varphi_1, \dots ,\varphi_{d-1}: \K^d \rightarrow \R$, which are given in the statement of the conjecture, and will be defined inductively on $k \ge 1$. 
	In fact, it will be given on a larger domain $(\K^d)^{\times n_1 \cdots n_k}$, where the action of the group $\wreathOfSym{k}{n_1,...,n_k}$ is as before.
	
	\medskip
	For $k=1$, the $\sym_{n_1}$-equivariant map
     \[
        \Phi_1: (\K^d)^{\times n_1} \longrightarrow W_{n_1}^{\oplus d-1}
     \]
     is given by mapping $(K_1,...,K_{n_1})$ to the an element which, for each $1 \le i \le d-1$, has in the coordinate living in the $i$th copy of $W_{n_1}$ the vector obtained from 
     \[
        (\varphi_i(K_1), \dots , \varphi_i(K_{n_1})) \in \R^{n_1}
    \]
    by subtracting the average $\frac{1}{n}(\varphi_i(K_1)+ \dots + \varphi_i(K_{n_1}))$ from each coordinate.

	\medskip
    Suppose $k \ge 2$ and assume the $\wreathOfSym{k-1}{n_1, \dots , n_{k-1}}$-equivariant map
    \[
        \Phi_{k-1}: (\K^d)^{\times n_1 \cdots n_{k-1}} \longrightarrow \wreathOfW{k-1}{d-1}{n_1, \dots , n_{k-1}}
    \]
    is already constructed . 
    We define the desired $\wreathOfSym{k}{n_1, \dots ,n_k}$-equivariant map
    \[
        \Phi_k: ((\K^d)^{\times n_1 \cdots n_{k-1}})^{\times n_k} \longrightarrow \wreathOfW{k-1}{d-1}{n_1, \dots ,n_{k-1}}^{\oplus n_k} \oplus W_{n_k}^{\oplus d-1}
    \]
    by mapping an element $(\Pi_1,...,\Pi_{n_k}) \in ((\K^d)^{\times n_1 \cdots n_{k-1}})^{\times n_k}$ to an element which:
    \begin{itemize}[\quad --]
        \item for each $1 \le j \le n_k$, at the coordinate living in the $j$th copy of $\wreathOfW{k-1}{d-1}{n_1, \dots ,n_{k-1}}$ has the value $\Phi_{k-1}(\Pi_j)$, and
        \item for each $1 \le i \le d-1$, at the coordinate living in the $i$th copy of $W_{n_k}$ is the vector obtained from
        \[
            (\varphi^{\Sigma}_i(\Pi_1), \dots , \varphi^{\Sigma}_i(\Pi_{n_k})) \in \R^{n_k}
        \]
        by subtracting the average $\frac{1}{n_k}(\varphi^{\Sigma}_i(\Pi_1)+ \dots + \varphi^{\Sigma}_i(\Pi_{n_k}))$ from each coordinate.
        Here we denoted by $\varphi_i^{\Sigma}$ the composition
        \[
            \varphi_i^{\Sigma}: (\K^d)^{\times m} \xrightarrow{~~ (\varphi_i)^{\times m} ~} \R^m \xrightarrow{\langle -,~ (1, \dots , 1) \rangle} \R,
        \]
        for any integer $m \ge 1$.  In the current situation $m=n_1\cdots n_{k-1}$.  
        \end{itemize}
        This completes the construction of the map $\Phi_k$.
        From the induction hypothesis it follows that $\Phi_k$ is indeed $\wreathOfSym{k}{n_1, \dots , n_k}$-equivariant.
    
\medskip    
The crucial property of the map \eqref{eq: EMP to wreathOfW} is that it maps a tuple $(K_1, \dots, K_n) \in (\K^d)^{\times n}$ to the origin if and only if the tuple satisfies
\[
    \varphi_i(K_1) = \dots = \varphi_i(K_n)
\]
for each $1 \le i \le n$.

\medskip
Now, we return to the proof of the theorem. 
Assume that for a convex body $K \in \K^d$ there is no solution to Conjecture \ref{conjecture: nandakumar and ramana rao for general d} which is of level $k$ and type $(n_1, \dots ,n_k)$. 
We will show that there exists an $\wreathOfSym{k}{n_1, \dots ,n_k}$-equivariant map of the form \eqref{eq: wreath to sphere of wreath map}.
 
Namely, since $K$ does not solve the conjecture, the $\wreathOfSym{k}{n_1, \dots , n_k}$-equivariant map \eqref{eq: EMP to wreathOfW} does not hit the origin.
In other words we may restrict the codomain of the map $ \Phi_k$
\[
    \Phi_k\colon \EMP_{\mu}(K, n) \longrightarrow \wreathOfW{k}{d-1}{n_1, \dots , n_k} \setminus \{ 0 \}.
\]
(In order to simplify the notation we do not introduce new name for the new map.)
Pre-composing it with the $\wreathOfSym{k}{n_1, \dots ,n_k}$-equivariant parametrisation map
\[
	\wreathOfConf{k}{d}{n_1, \dots ,n_k} \longrightarrow \EMP_{\mu}(K,n)
\]
from Proposition \ref{proposition: parametrisation of iterated partitions}, and post-composing it with the $\wreathOfSym{k}{n_1, \dots ,n_k}$-equivariant retraction
	\[
		\wreathOfW{k}{d-1}{n_1, \dots ,n_k} \setminus \{0\} \longrightarrow S(\wreathOfW{k}{d-1}{n_1, \dots ,n_k}),
	\]
yields an $\wreathOfSym{k}{n_1, \dots ,n_k}$-equivariant map
\[
    \wreathOfConf{k}{d}{n_1, \dots ,n_k} \longrightarrow \EMP_{\mu}(K,n) \longrightarrow \wreathOfW{k}{d-1}{n_1, \dots , n_k} \setminus \{ 0 \} \longrightarrow S(\wreathOfW{k}{d-1}{n_1, \dots ,n_k})
\]
of the form \eqref{eq: wreath to sphere of wreath map}, which completes  the proof.
\end{proof}

\medskip
\section{Proof of Theorem \ref{theorem: main}: Equivariant Obstruction Theory} 
\label{section: equivariant obstruction theory}
\medskip

In this section we give a proof of Theorem \ref{theorem: main} using equivariant obstruction theory of tom Dieck \cite[Sec.\,II.3]{tomDieck:TransformationGroups}. 
Namely, for integers $k \ge 1$, $d \ge 1$ and $n_1, \dots , n_k \ge 2$ we show that an $\wreathOfSym{k}{n_1, \dots ,n_k}$-equivariant map
\begin{equation*}
	\wreathOfConf{k}{d}{n_1, \dots , n_k} \longrightarrow S(\wreathOfW{k}{d-1}{n_1, \dots , n_k})
\end{equation*}
does not exist if and only if $n_1, \dots , n_k$ are powers of the same prime number.
 
In order to use the equivariant obstruction theory \cite[Sec.\,II.3]{tomDieck:TransformationGroups}, we need to  construct a cellular model $\cellWreathOfConf{k}{d}{n_1, \dots , n_k}$ of the space $\wreathOfConf{k}{d}{n_1, \dots , n_k}$ which is also its equivariant deformation retract. 
Then, instead of studying the existence of the above map, we focus our attention on the equivalent problem of the existence of an $\wreathOfSym{k}{n_1, \dots , n_k}$-equivariant map
\begin{equation*}
	\cellWreathOfConf{k}{d}{n_1, \dots , n_k} \longrightarrow S(\wreathOfW{k}{d-1}{n_1, \dots , n_k}).
\end{equation*}

\medskip
\subsection{Cellular model}

Blagojevi\'c \& Ziegler \cite[Sec.\,3]{BlagojevicZiegler15} have constructed an $(n-1)(d-1)$-dimensional $\sym_n$-equivariant cellular model $\cellConf(d,n)$ of the Salvetti type for the classical configuration space $\conf(\R^d, n)$ which is its  $\sym_n$-equivariant deformation retract. 
In this section, we use this model to construct an $\wreathOfSym{k}{n_1, \dots ,n_k}$-equivariant model $\cellWreathOfConf{k}{d}{n_1, \dots , n_k}$ of $\wreathOfConf{k}{d}{n_1, \dots , n_k}$ which is of dimension $(d-1)(n_1 \dots  n_k-1)$ and is its $\wreathOfSym{k}{n_1, \dots , n_k}$-equivariant deformation retract.
Additionally we collect relevant facts about this model needed for an application of the equivariant obstruction theory.

\medskip
For integers $k \ge 1$, $d \ge 1$ and $n_1, \dots , n_k \ge 2$ we simplify notation on occasions by setting
${\bf n} := (n_1, \dots ,n_k)$ and ${\bf n}' := (n_1, \dots, n_{k-1})$.
However, we will keep the longer notation in statements of lemmas, definitions, propositions, and theorems.

\medskip
\begin{definition} [Cellular model]
	Let $k \ge 1$, $d \ge 1$ and $n_1, \dots , n_k \ge 2$ be integers.
	An $\wreathOfSym{k}{n_1, \dots , n_k}$-equivariant cell complex $\cellWreathOfConf{k}{d}{n_1, \dots , n_k}$ is defined inductively as follows.
	 \begin{compactitem}[\quad --]
		\item For $k = 1$ set $\cellWreathOfConf{1}{d}{n_1} := \cellConf(\R^d, n_1)$ with the $\wreathOfSym{1}{n_1} \cong \sym_{n_1}$-action (see \cite[Sec.\,3]{BlagojevicZiegler15}).
		
		\item For $k \ge 2$ set
		\[
			\cellWreathOfConf{k}{d}{n_1, \dots , n_k} := \cellWreathOfConf{k-1}{d}{n_1, \dots , n_{k-1}}^{\times n_k} \times \cellConf(d, n_k)
		\]
		with the wreath product action of $\wreathOfSym{k}{n_1, \dots , n_k} = \wreathOfSym{k-1}{n_1, \dots , n_{k-1}}^{\times n_k} \rtimes \sym_{n_k}$ (see \mbox{Definition \ref{def: wreath product action}}).
	\end{compactitem}
\end{definition}

\medskip
From the recursive formula follows that 
\[
	\dim \cellWreathOfConf{k}{d}{\bf n} = n_k \dim \cellWreathOfConf{k-1}{d}{{\bf n}'} + (d-1)(n_k-1).
\]
Since in the base case $\dim \cellWreathOfConf{1}{d}{n} = \dim \cellConf(d, n) = (d-1)(n-1)$, it follows that the dimension of the complex $\cellWreathOfConf{k}{d}{\bf n}$ is indeed 
\[
	M_k := (d-1)(n_1 \dots n_k-1).
\]
Even though $M_k$ depends also on parameters $d$ and $n_1, \dots , n_k$, for the sake of simplicity
 they will be omitted from the notation. However, we will keep parameter $k$ in the notation, as most of the proofs are done using induction on $k$. This convention will be kept for other notions when there is not danger of confusion.

\medskip
\begin{proposition} 
	Let $k \ge 1$, $d \ge 1$ and $n_1, \dots , n_k \ge 2$ be integers. 
	Then $\cellWreathOfConf{k}{d}{n_1, \dots , n_k}$ is a finite, regular, free $\wreathOfSym{k}{n_1, \dots ,n_k}$-CW-complex. 
	Moreover, it is an $\wreathOfSym{k}{n_1, \dots , n_k}$-equivariant deformation retract of the wreath product of configuration spaces $\wreathOfConf{k}{d}{n_1, \dots , n_k}$.
\end{proposition}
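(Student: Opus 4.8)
The plan is to induct on $k$, in parallel with the inductive definitions of $\cellWreathOfConf{k}{d}{{\bf n}}$, $\wreathOfConf{k}{d}{{\bf n}}$ and $\wreathOfSym{k}{{\bf n}}$, where ${\bf n}=(n_1,\dots,n_k)$. The base case $k=1$ is exactly \cite[Sec.\,3]{BlagojevicZiegler15}: the Salvetti-type complex $\cellConf(d,n_1)=\cellWreathOfConf{1}{d}{n_1}$ is a finite regular CW complex with a free cellular $\sym_{n_1}=\wreathOfSym{1}{n_1}$-action, and it is an $\sym_{n_1}$-equivariant deformation retract of $\conf(\R^d,n_1)$. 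For the inductive step set ${\bf n}'=(n_1,\dots,n_{k-1})$, $G:=\wreathOfSym{k-1}{{\bf n}'}$, $A:=\cellWreathOfConf{k-1}{d}{{\bf n}'}$ and $X:=\wreathOfConf{k-1}{d}{{\bf n}'}$; by the inductive hypothesis $A$ is a finite regular free $G$-CW complex and a $G$-equivariant deformation retract of $X$. By definition $\cellWreathOfConf{k}{d}{{\bf n}}=A^{\times n_k}\times\cellConf(d,n_k)$ and $\wreathOfConf{k}{d}{{\bf n}}=X^{\times n_k}\times\conf(\R^d,n_k)$, each carrying the wreath product action of $\wreathOfSym{k}{{\bf n}}=G^{\times n_k}\rtimes\sym_{n_k}$.

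First I would equip $\cellWreathOfConf{k}{d}{{\bf n}}$ with the product CW structure, whose cells are the products $e_1\times\cdots\times e_{n_k}\times f$ of cells $e_i$ of $A$ and a cell $f$ of $\cellConf(d,n_k)$, and check that it is a finite regular CW complex: finiteness and the agreement of the product CW topology with the product topology are clear since all factors are finite, and a finite product of regular CW complexes is regular, the characteristic map of $e_1\times\cdots\times e_{n_k}\times f$ being the product of the individual characteristic maps, hence a homeomorphism onto $\overline{e_1}\times\cdots\times\overline{e_{n_k}}\times\overline{f}$. The wreath product action is cellular, since $(\Sigma_1,\dots,\Sigma_{n_k};\sigma)$ sends $e_1\times\cdots\times e_{n_k}\times f$ to $(\Sigma_1e_{\sigma^{-1}(1)})\times\cdots\times(\Sigma_{n_k}e_{\sigma^{-1}(n_k)})\times(\sigma f)$, which is again a cell. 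It is free: if $(\Sigma_1,\dots,\Sigma_{n_k};\sigma)$ fixes $(a_1,\dots,a_{n_k};c)$, then $\sigma c=c$, so $\sigma=\id$ by freeness of the $\sym_{n_k}$-action on $\cellConf(d,n_k)$, and then $\Sigma_ia_i=a_i$ for every $i$ forces $\Sigma_i=\id$ by freeness of the $G$-action on $A$. Hence $\cellWreathOfConf{k}{d}{{\bf n}}$ is a finite regular free $\wreathOfSym{k}{{\bf n}}$-CW complex.

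Next I would build the equivariant deformation retraction. Pick a $G$-equivariant strong deformation retraction $H\colon X\times[0,1]\to X$ of $X$ onto $A$ (inductive hypothesis) and a $\sym_{n_k}$-equivariant strong deformation retraction $H'\colon\conf(\R^d,n_k)\times[0,1]\to\conf(\R^d,n_k)$ onto $\cellConf(d,n_k)$ (base case), and set
\[
\widetilde H\big((x_1,\dots,x_{n_k},y),t\big):=\big(H(x_1,t),\dots,H(x_{n_k},t),H'(y,t)\big).
\]
This is a strong deformation retraction of $\wreathOfConf{k}{d}{{\bf n}}$ onto $\cellWreathOfConf{k}{d}{{\bf n}}$; it is $G^{\times n_k}$-equivariant factor by factor, and it is $\sym_{n_k}$-equivariant because the single homotopy $H$ is applied identically in all $n_k$ coordinates, so permuting coordinates commutes with $\widetilde H$, while $H'$ is $\sym_{n_k}$-equivariant. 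Thus $\widetilde H$ is $\wreathOfSym{k}{{\bf n}}$-equivariant, which closes the induction.

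I expect the only non-formal point to be this last one: one must use the \emph{same} equivariant deformation retraction in every one of the $n_k$ product factors, which is exactly what upgrades equivariance from $G^{\times n_k}$ to the full wreath product $G^{\times n_k}\rtimes\sym_{n_k}$; this is legitimate precisely because those $n_k$ factors are the same $G$-space with the same retract. The remaining verifications — that a finite product of finite regular CW complexes is a regular CW complex carrying the product topology, and that a free cellular action on a regular CW complex yields a free $G$-CW complex — are standard.
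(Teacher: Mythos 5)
Your proposal is correct and follows essentially the same route as the paper: induction on $k$, with the base case taken from Blagojevi\'c--Ziegler and the inductive step given by the product of the previously constructed equivariant deformation retractions (and homotopies), the key point being that the same retraction is used in all $n_k$ factors so that equivariance upgrades from $\wreathOfSym{k-1}{n_1,\dots,n_{k-1}}^{\times n_k}$ to the full wreath product. You merely spell out the routine verifications (regularity and freeness of the product $G$-CW structure) that the paper leaves implicit.
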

\begin{proof}
	The proof is done by induction on $k \ge 1$. For the base case $k = 1$ let 
	\[
		r_1\colon \conf(\R^d, n) \longrightarrow_{\sym_n} \cellConf(\R^d, n)
	\]
	be the $\wreathOfSym{1}{n_1}$-equivariant deformation retraction and $h_1\colon \id \simeq_{\sym_n} i_1 \circ r_1$ the equivariant homotopy guarantied by \cite[Thm.\,3.13]{BlagojevicZiegler15}.
	Here  $i_1\colon \cellConf(d,n) \hookrightarrow \conf(\R^d, n)$ is the inclusion.
	
	\medskip
	Let now $k \ge 2$. 
	The deformation retraction $r_k$ is set to be
	\[
		r_k := (r_{k-1})^{\times n_k} \times r_1\colon  \wreathOfConf{k-1}{d}{{\bf n}'}^{\times n_k} \times \conf(\R^d, n_k) \to \cellWreathOfConf{k-1}{d}{{\bf n}'}^{\times n_k} \times \cellConf(d, n_k).
	\] 
	By the induction hypothesis it follows that the complex is finite, regular and free, as well as the fact that $r_k$ is indeed an $\wreathOfSym{k}{\bf n}$-equivariant retraction.
	Furthermore, the homotopy defined by
	\[
		h_k := (h_{k-1})^{\times n_k} \times h_1\colon \id \simeq_{\wreathOfSym{k}{\bf n}} i_k \circ r_k
	\]
	makes $r_k$ into a deformation retraction.
	Here, $i_k\colon  \cellWreathOfConf{l}{d}{\bf n} \hookrightarrow \wreathOfConf{k}{d}{\bf n}$ is the inclusion. 
\end{proof}

\medskip
Let us describe in more detail the cellular structure of the $\sym_n$-CW complex $\cellConf(d,n)$ developed by Blagojevi\'c \& Ziegler in \cite[Thm.\,3.13]{BlagojevicZiegler15}. 
The set of cells of the $\sym_n$-CW complex complex $\cellConf(d, n)$ is 	
\[
	\{\check{c}(\sigma, {\bf i}):~ \sigma \in \sym_n, ~ {\bf i} \in [d]^{\times n-1}\}.
\]
The dimension of the cell $\check{c}(\sigma, {\bf i})$ is given by the formula  $\dim \check{c}(\sigma, {\bf i}) = (i_1-1) + \dots + (i_{n-1}-1)$, making the complex $(n-1)(d-1)$-dimensional. 
The cellular $\sym_n$-action is given on the cells by $\tau \cdot \check{c}(\sigma, {\bf i}) = \check{c}(\tau \cdot \sigma, {\bf i})$, for each $\tau \in \sym_n$. 
In particular, from this information, we obtain the following:
\begin{compactitem}[\quad --]
	\item There are in total $n!$ maximal cells, all of which belong to the same $\sym_n$-orbit.  
	Let us choose an orbit representative $\check{c}(\id, {\bf d})$, where ${\bf d} := (d, \dots ,d) \in [d]^{\times n-1}$. 
	\item There are in total $(n-1)n!$ codimension one cells split into $n-1$ orbits of the group $\sym_n$. 
	For $1 \le i \le n-1$ let us denote by ${\bf d}_i \in [d]^{\times n-1}$ the vector which has value $d$ on all coordinates different from $i$, and value $d-1$ on coordinate $i$. In this notation, codimension one cells are encoded by the set
		\[
			\{\check{c}(\sigma, {\bf d}_i):~ \sigma \in \sym_n,~ 1 \le i \le n-1\}.
		\]
		For example, one choice of $n-1$ orbit representatives is $\check{c}(\id, {\bf d}_1), \dots , \check{c}(\id, {\bf d}_{n-1})$.
\end{compactitem}

\medskip
Boundary of a maximal cell $\check{c}(\sigma, {\bf d})$ consists of the following $\binom{n}{1} + \dots + \binom{n}{n-1}$ codimension one cells, as can be seen either from the boundary relations in \cite[Thm.\,3.13]{BlagojevicZiegler15} or explicitly from the proof of \mbox{\cite[Lem.\,4.1]{BlagojevicZiegler15}}. 
For $\sigma \in \sym_n$, integer $1 \le m \le n-1$, and an $m$-element subset $J \subseteq [n]$, let us denote by $\sigma_J \in \sym_n$ the permutation
\[
	\sigma_J\colon [n] \longrightarrow [n],\qquad t \longmapsto \sigma(j_t),
\]
where $J = \{j_1 < \dots < j_m)$ and $[n] \setminus J = \{j_{m+1}< \dots < j_n\}$ is the ordering of the elements. The set of codimension one cells in the boundary of the maximal cell $\check{c}(\sigma, {\bf d})$ is encoded by the set 
\begin{equation} \label{eq: bdry cell of ch(id, d)}
	\{\check{c}(\sigma_J, {\bf d}_{m}):~ 1 \le m \le n-1,~ J  \subseteq [n] \text{ with } |J| = m\},
\end{equation}
where two cells $\check{c}(\sigma_J, {\bf d}_{m})$ and $\check{c}(\sigma_I, {\bf d}_{s})$ belong to the same $\sym_n$-orbit if and only if $m = s$. In particular, for each $1 \le m \le n-1$, the size of the intersection of the orbit of the cell $\check{c}(\id, {\bf d}_m)$ with the above set of boundary cells is precisely $\binom{n}{m}$.

\medskip
Extending the notation introduced above, let us define a generalisation of the special cell $\check{c}(\id, {\bf d})$ of $\cellConf(d, n)$, which was chosen to be the representative of the orbit of maximal cells.

\medskip
\begin{definition}[Orbit representative maximal cell] \label{def: orbit representative maximal cell}
	Let $k \ge 1$, $d \ge 1$ and $n_1, \dots , n_k \ge 2$ be integers. 
	We define a maximal cell $e_k$ of the cell complex $\cellWreathOfConf{k}{d}{n_1, \dots , n_k}$ inductively as follows.
	\begin{compactitem}[\quad --]
		\item For $k = 1$ let $e_1 := \check{c}(\id, {\bf d})$ be a maximal cell in $\cellWreathOfConf{1}{d}{n} = \cellConf(d,n)$.
		
		\item For $k \ge 2$ let us define 
		\[
			e_k := (e_{k-1})^{\times n_k} \times \check{c}(\id, {\bf d})
		\]
		to be a maximal cell in cell complex 
		\[
			\cellWreathOfConf{k}{d}{n_1, \dots , n_k} = \cellWreathOfConf{k-1}{d}{n_1, \dots , n_{k-1}}^{\times n_k} \times \cellConf(d, n_k),
		\]
		where $e_{k-1}$ denotes the previously defined maximal cell of $\cellWreathOfConf{k-1}{d}{n_1, \dots , n_{k-1}}$ and $\check{c}(\id, {\bf d})$ is a maximal cell of $\cellConf(d, n_k)$.
	\end{compactitem}
\end{definition}

\medskip
In the next lemma we describe the index set for the codimension one cells lying in the boundary of the $M_k$-cell $e_k$.

\medskip
\begin{lemma} \label{lem: orbits of M and (M-1)-cells and B_k}
	Let $k \ge 1$, $d \ge 1$ and $n_1, \dots , n_k \ge 2$ be integers.
	\begin{enumerate}[\quad \rm (i)]
		\item\label{lem: orbits of M and (M-1)-cells and B_k -- 01} All maximal dimensional cells of $\cellWreathOfConf{k}{d}{n_1, \dots , n_k}$ form a single $\wreathOfSym{k}{n_1, \dots , n_k}$-orbit.
		
		\item\label{lem: orbits of M and (M-1)-cells and B_k -- 02} The orbit representative maximal cell $e_k$ chosen in Definition \ref{def: orbit representative maximal cell} has the boundary consisting of codimension one cells which are indexed by the set
			\begin{equation*}
				B_k := \bigcup_{i = 1}^k \bigcup_{m = 1}^{n_i - 1} \binom{[n_i]}{m} \times [n_{i+1}] \times \dots \times [n_{k}].
			\end{equation*}
		
		\item\label{lem: orbits of M and (M-1)-cells and B_k -- 03} The $\wreathOfSym{k}{n_1, \dots , n_k}$-orbit stratification of $B_k$ is given by
			\[
				\Big\{\binom{[n_i]}{m} \times \{{\bf j}\}:~ 1 \le i \le k,~ 1 \le m \le n_i - 1,~ {\bf j} \in [n_{i+1}] \times \dots \times [n_{k}] \Big\}.
			\]
	\end{enumerate}
\end{lemma}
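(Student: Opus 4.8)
The plan is to prove (i), (ii), and (iii) simultaneously by induction on $k$, exploiting the defining product structure $\cellWreathOfConf{k}{d}{n_1, \dots, n_k} = \cellWreathOfConf{k-1}{d}{n_1, \dots, n_{k-1}}^{\times n_k} \times \cellConf(d, n_k)$, the fact that cells of a product CW-complex are products of cells with dimensions adding, and the wreath product form of the $\wreathOfSym{k}{n_1, \dots, n_k}$-action. For $k = 1$ all three statements are exactly the recalled structure of $\cellConf(d, n_1)$: the maximal cells form a single $\sym_{n_1}$-orbit, the boundary of $e_1 = \check c(\id, {\bf d})$ is indexed by $B_1 = \bigcup_{m=1}^{n_1-1}\binom{[n_1]}{m}$, and, since $\check c(\sigma_J, {\bf d}_m)$ and $\check c(\sigma_I, {\bf d}_s)$ are $\sym_{n_1}$-equivalent iff $m = s$, the orbit stratification of $B_1$ is $\{\binom{[n_1]}{m} : 1 \le m \le n_1 - 1\}$.

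For the inductive step $k \ge 2$ write ${\bf n} = (n_1, \dots, n_k)$, ${\bf n}' = (n_1, \dots, n_{k-1})$ and $G = \wreathOfSym{k}{{\bf n}} = \wreathOfSym{k-1}{{\bf n}'}^{\times n_k} \rtimes \sym_{n_k}$. A maximal cell of $\cellWreathOfConf{k}{d}{{\bf n}}$ is a product $C_1 \times \dots \times C_{n_k} \times C$ of maximal cells of the $n_k$ copies of $\cellWreathOfConf{k-1}{d}{{\bf n}'}$ and of $\cellConf(d, n_k)$. To prove (i), I would first apply $(\id, \dots, \id; \sigma) \in G$ with $\sigma \cdot C = \check c(\id, {\bf d})$ (the base case for $\cellConf(d, n_k)$), then apply $(\Sigma_1, \dots, \Sigma_{n_k}; \id)$ with each $\Sigma_j$ moving the cell currently in slot $j$ to $e_{k-1}$ (the inductive hypothesis for (i)), arriving at $e_k = (e_{k-1})^{\times n_k} \times \check c(\id, {\bf d})$. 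For (ii), the codimension-one cells of $\partial e_k$ are of two kinds: \textbf{(A)} exactly one slot $l \in [n_k]$ carries a codimension-one cell of $\partial e_{k-1}$ while all other slots carry $e_{k-1}$ and the last coordinate is $\check c(\id, {\bf d})$, giving the index set $[n_k] \times B_{k-1}$ by the inductive hypothesis for (ii); \textbf{(B)} all slots carry $e_{k-1}$ and the last coordinate is a codimension-one cell of $\partial\check c(\id, {\bf d})$ in $\cellConf(d, n_k)$, giving the index set $\bigcup_{m=1}^{n_k-1}\binom{[n_k]}{m}$. Moving the slot index of kind (A) into the final Cartesian coordinate rewrites $[n_k] \times B_{k-1}$ as the $1 \le i \le k-1$ part of $B_k$, while kind (B) is its $i = k$ summand (with empty trailing product), so the union is exactly $B_k$.

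For (iii), the first point is that the wreath product action permutes the $n_k$ slots and acts cellularly inside each factor, hence preserves the kind of a codimension-one cell; so kind-(A) and kind-(B) strata cannot share a $G$-orbit. For a kind-(B) boundary cell $((e_{k-1})^{\times n_k}; b')$ of $e_k$, since all maximal cells of $\cellWreathOfConf{k-1}{d}{{\bf n}'}$ lie in one orbit by (i), two such are $G$-equivalent iff $b'$ and $b''$ are $\sym_{n_k}$-equivalent, i.e.\ iff their indices $m$ coincide (base case); this gives the strata $\binom{[n_k]}{m}$. For a kind-(A) boundary cell $c_{l,\beta}$ (non-maximal slot $l$, carrying the boundary cell of $e_{k-1}$ indexed by $\beta \in B_{k-1}$), any $(\Sigma_1, \dots, \Sigma_{n_k}; \sigma) \in G$ sends the last coordinate $\check c(\id, {\bf d})$ to $\check c(\sigma, {\bf d})$ and the non-maximal slot from $l$ to $\sigma(l)$; so matching two such boundary cells of $e_k$ forces $\sigma = \id$, hence $l = l'$, and then the problem reduces inside slot $l$ to matching $\beta$ and $\beta'$, i.e.\ by the inductive hypothesis for (iii) to their lying in a common stratum of $B_{k-1}$. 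Appending the slot index $l$ to the inductive stratum $\binom{[n_i]}{m}\times\{{\bf j}'\}$ (with $1 \le i \le k-1$) produces $\binom{[n_i]}{m}\times\{{\bf j}\}$ with ${\bf j} = ({\bf j}', l)$, which is the $1 \le i \le k-1$ part of the claimed stratification; together with the kind-(B) strata this completes the induction.

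The hard part is (iii), and specifically the temptation to merge cells that sit in different slots: one must notice that any element of $G$ permuting the slots simultaneously moves the $\cellConf(d, n_k)$-coordinate and therefore cannot fix the representative maximal cell $e_k$ unless it acts trivially there — which is exactly why the factor $[n_k]$ (the slot index) survives as the last coordinate of the orbit label in $B_k$ rather than being absorbed. The rest is routine bookkeeping: checking the product-cell boundary formula and keeping the reindexing between $[n_k] \times B_{k-1}$ together with the boundary cells of $\check c(\id, {\bf d})$, on one side, and the union defining $B_k$, on the other, consistent at every level of the recursion.
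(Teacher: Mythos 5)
Your proposal is correct and follows essentially the same route as the paper: induction on $k$ with the base case read off from the known cell structure of $\cellConf(d,n_1)$, the splitting of $\partial e_k$ into the two types of product boundary cells (non-maximal cell in one of the $n_k$ slots versus non-maximal cell in the $\cellConf(d,n_k)$-factor), the recursive identification $B_k = B_{k-1}\times[n_k] \cup \bigcup_m \binom{[n_k]}{m}$, and the same orbit analysis, including the key observation that a slot-permuting element moves the last coordinate $\check{c}(\id,{\bf d})$ and hence cannot merge cells sitting in different slots. No substantive difference from the paper's argument.
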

\begin{proof}
	The proof of all three statements is done simultaneously by induction on $k \ge 1$. 
	The base case $k = 1$ of the complex $\cellWreathOfConf{1}{d}{n} = \cellConf(d, n)$ is treated in \cite[Lem.\,4.1]{BlagojevicZiegler15} and \cite[Proof of Lem.\,4.2]{BlagojevicZiegler15}. 
	See also the boundary description \eqref{eq: bdry cell of ch(id, d)}. 
	A codimension one boundary cell $\check{c}(\id_J, {\bf d}_j) \subseteq \partial \check{c}(\id, {\bf d})$ corresponds to an element
	\[
		J \in \bigcup_{m=1}^{n-1} \binom{[n]}{m} = B_1,
	\]
	for each $1 \le j \le n-1$ and $J \subseteq [n]$ with $|J| = j$. 
	
	\medskip
	Let $k \ge 2$. 
	From the inductive definition of the cell complex $\cellWreathOfConf{k}{d}{\bf n}$ and the induction hypothesis it follows that the $M_k$-cells form a single $\wreathOfSym{k}{\bf n}$-orbit, which completes the proof of part \eqref{lem: orbits of M and (M-1)-cells and B_k -- 01}. 
	By the boundary formula of the product applied to the cell $e_k = (e_{k-1})^{n_k} \times \check{c}(\id, {\bf d})$ it follows that the $(M_k-1)$-cells in the boundary $\partial e_k$ are of the following two types.
	\begin{compactenum} [\quad (1)]
		\item The first type is
		\[
			(e_{k-1})^{ i-1} \times \beta \times (e_{k-1})^{ n_k-i} \times \check{c}(\id, {\bf d}),
		\]
		where $1 \le i \le n_k$ and $\beta$ is the codimension one boundary cell of $e_{k-1}$. Let $\beta \subseteq \partial e_{k-1}$ be indexed by $b \in B_{k-1}$. The above $(M_k-1)$-cell is set to be indexed by 
		\[
			(b, i) \in B_{k-1} \times [n_k].
		\]
		From the inductive definition of the $\wreathOfSym{k}{\bf n}$-action it follows that two boundary cells of the first type indexed by 
		\[
			(b, i),~ (b', i') \in B_{k-1} \times [n_k]
		\]
		are in the same $\wreathOfSym{k}{\bf n}$-orbit if and only if $i = i'$, and cells $\beta,\beta' \subseteq \partial e_{k-1}$ are in the same $\wreathOfSym{k-1}{{\bf n}'}$-orbit.
		
		\item The second type is
		\[
			(e_{k-1})^{ n_k} \times \check{c}(\id_J, {\bf d}_j)
		\]
		where $1 \le j \le n_k - 1$ and $J \subseteq [n_k]$ with $|J| = j$. Here, $\check{c}(\id_J, {\bf d}_j)$ denotes the codimension one boundary cell of $\check{c}(\id, {\bf d})$ described in \eqref{eq: bdry cell of ch(id, d)}. The above $(M_k-1)$-cell is set to be indexed by 	
		\[
			J \in \bigcup_{m=1}^{n_k-1} \binom{[n_k]}{m}.
		\] 
		From the definition of the group action it follows that two boundary cells of the second type, indexed by non-empty proper subsets $J, J' \subseteq [n_k]$, are in the same $\wreathOfSym{k}{\bf n}$-orbit if and only if $|J| = |J'|$.
	\end{compactenum}
	In particular, we obtained a recursive formula
	\[
		B_k = B_{k-1} \times [n_k] ~\cup~  \bigcup_{m=1}^{n_k-1} \binom{[n_k]}{m},
	\]
	which together with the base case description of $B_1$ implies the part \eqref{lem: orbits of M and (M-1)-cells and B_k -- 02}.
	
	\medskip
	Since no cell of the first type is in the same $\wreathOfSym{k}{\bf n}$-orbit as the cells of the second type, by the above orbit description of the cells of each of the two types, it follows that the orbit stratification of $B_k$ stated in the part \eqref{lem: orbits of M and (M-1)-cells and B_k -- 03} holds.
\end{proof}

\medskip
\subsection{Obstructions}

Our goal is to show that an $\wreathOfSym{k}{\bf n}$-equivariant map
\begin{equation} \label{eq: cellWreath to sphere}
	\cellWreathOfConf{k}{d}{\bf n} \longrightarrow S(\wreathOfW{k}{d-1}{\bf n})
\end{equation}
does not exists if and only if $n_1, \dots , n_k$ are all powers of the same prime number. 
We have that:
\begin{compactitem}[\quad --]
	\item $\cellWreathOfConf{k}{d}{\bf n}$ is a free $\wreathOfSym{k}{\bf n}$-CW complex of dimension $M_k$,
	
	\item the sphere $S(\wreathOfW{k}{d-1}{\bf n})$ is $(M_k-1)$-simple and $(M_k-2)$-connected.
\end{compactitem}
Applying the equivariant obstruction theory \cite[Sec.\,II.3]{tomDieck:TransformationGroups}, we get that the existence of an $\wreathOfSym{k}{\bf n}$-equivariant map \eqref{eq: cellWreath to sphere} is equivalent to the vanishing of the primary obstruction class
\[
	{\frak o} = [c(f_k)] \in H^{M_k}_{\wreathOfSym{k}{\bf n}}(\cellWreathOfConf{k}{d}{\bf n}; \pi_{M_k-1}(S(\wreathOfW{k}{d-1}{\bf n}))).
\]
Here $c(f_k)$ represents the $M_k$-dimensional obstruction cocycle associated to a general position equivariant map \mbox{$f_k\colon\cellWreathOfConf{k}{d}{\bf n} \longrightarrow \wreathOfW{k}{d-1}{\bf n}$} (see \cite[Def.\,1.5]{BlagojevicBlagojevic07}). 
Its values on the $M_k$-cells $e$ are given by the following degrees
\[
	c(f_k)(e) = \deg(r \circ f_k\colon \partial e \longrightarrow \wreathOfW{k}{d-1}{\bf n} \setminus \{0\} \longrightarrow S(\wreathOfW{k}{d-1}{\bf n})),
\]
where $r$ is the radial retraction.

\medskip
The Hurewicz isomorphism \cite[Cor.\,VII.10.8]{Bredon2010} gives an $\wreathOfSym{k}{\bf n}$-module isomorphism
\[
	\pi_{M_k-1}(S(\wreathOfW{k}{d-1}{\bf n})) \cong H_{M_k-1}(S(\wreathOfW{k}{d-1}{\bf n});\Z)  =:\ZZ_k(d-1; {\bf n}).
\]
In order to simplify the notation, let us put $\ZZ_k := \ZZ_k(d-1; {\bf n})$ when there is no danger of confusion.

The $\wreathOfSym{k}{\bf n}$-module $\ZZ_k = \langle \xi_k \rangle$ is isomorphic to $\Z$ as an abelian group. 
In order to describe the $\wreathOfSym{k}{\bf n}$-module structure on $\ZZ_k$ we need the following.

\medskip
\begin{definition}(Orientation function) \label{def: orientation function}
	Let $k \ge 1$, $d \ge 1$ and $n_1, \dots , n_k \ge 2$ be integers. 
	The orientation function
	\[
		\orient\colon \wreathOfSym{k}{n_1, \dots , n_k} \longrightarrow \{-1, +1\}
	\]
	is given inductively on $k \ge 1$ as follows.
	\begin{compactitem}[\quad --]
		\item For $k = 1$ we set $\orient(\sigma) := (\sgn \sigma)^{d-1}$ for each $\sigma \in \sym_n = \wreathOfSym{1}{n}$, where $\sgn$ denotes the sign of the permutation.
		
		\item For $k \ge 2$ we set
		\[
			\orient(\Sigma_1, \dots, \Sigma_{n_k}; \sigma) := (\sgn \sigma)^{(d-1)\cdot n_1 \cdots n_{k-1}} \cdot \orient(\Sigma_1) \cdots \orient(\Sigma_{n_k}),
		\]
		for any 
		\[
			(\Sigma_1, \dots, \Sigma_{n_k}; \sigma) \in \wreathOfSym{k-1}{n_1, \dots , n_{k-1}}^{\times n_k} \rtimes \sym_{n_k} = \wreathOfSym{k}{n_1, \dots , n_k}.
		\]
	\end{compactitem}
\end{definition}

\medskip
In the next lemma we show that the action of $\wreathOfSym{k}{\bf n}$ changes orientation on the vector space $\wreathOfW{k}{d-1}{\bf n}$ according to the orientation function $\orient$. 
Consequently, the $\wreathOfSym{k}{\bf n}$-module structure on $\ZZ_k = \langle \xi_k \rangle$ is given by
\begin{equation} \label{eq: wreathSym-module structure on ZZ}
	\Sigma \cdot \xi_k = \orient(\Sigma) \cdot \xi_k
\end{equation}
for each $\Sigma \in \wreathOfSym{k}{\bf n}$.

\medskip
\begin{lemma} \label{lem: wreathSym action on wreathW}
	Let $k \ge 1$, $d \ge 1$ and $n_1, \dots , n_k \ge 2$ be integers. 
	The group $\wreathOfSym{k}{\bf n}$ acts on the vector space $\wreathOfW{k}{d-1}{\bf n}$ by changing the orientation according to the orientation function $\orient$ from Definition \ref{def: orientation function}. In particular, the map
	\[
		\orient: \wreathOfSym{k}{n_1, \dots , n_k} \longrightarrow (\{-1, +1\}, \cdot)
	\]
	is a group homomorphism.
\end{lemma}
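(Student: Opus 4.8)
The plan is to establish the following precise form of the first assertion: for every $\Sigma\in\wreathOfSym{k}{n_1,\dots,n_k}$ the linear automorphism of $\wreathOfW{k}{d}{n_1,\dots,n_k}$ given by the wreath product action has determinant $\orient(\Sigma)\in\{-1,+1\}$. Granting this, the homomorphism claim is free, since $\det$ is multiplicative on linear automorphisms: $\orient(\Sigma\Theta)=\det(\Sigma\Theta)=\det(\Sigma)\det(\Theta)=\orient(\Sigma)\orient(\Theta)$, and formula \eqref{eq: wreathSym-module structure on ZZ} for the $\wreathOfSym{k}{\bf n}$-module structure on $\ZZ_k$ follows as well. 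The determinant identity is proved by induction on $k$, following the inductive definitions of $\wreathOfW{k}{d}{\bf n}$ and of $\orient$ in Definition \ref{def: orientation function}. For the base case $k=1$ I would use the $\sym_{n_1}$-invariant splitting $\R^{n_1}=W_{n_1}\oplus\langle(1,\dots,1)\rangle$: a permutation matrix on $\R^{n_1}$ has determinant $\sgn\sigma$ and fixes the all-ones vector, so its restriction to $W_{n_1}$ again has determinant $\sgn\sigma$; taking the diagonal action on $\wreathOfW{1}{d}{n_1}=W_{n_1}^{\oplus d-1}$ raises this to $(\sgn\sigma)^{d-1}=\orient(\sigma)$.

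For the inductive step, write $\wreathOfW{k}{d}{\bf n}=\wreathOfW{k-1}{d}{{\bf n}'}^{\oplus n_k}\oplus W_{n_k}^{\oplus d-1}$. By Definition \ref{def: wreath product action}, the element $(\Sigma_1,\dots,\Sigma_{n_k};\sigma)$ acts on the second summand through $\sigma$ alone, and on the first summand by $(x_1,\dots,x_{n_k})\mapsto(\Sigma_1\cdot x_{\sigma^{-1}(1)},\dots,\Sigma_{n_k}\cdot x_{\sigma^{-1}(n_k)})$; in particular it respects the displayed direct sum, so its determinant is the product of the determinants on the two summands. On $W_{n_k}^{\oplus d-1}$ the base-case computation (with $n_k$ in place of $n_1$) gives $(\sgn\sigma)^{d-1}$. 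On $\wreathOfW{k-1}{d}{{\bf n}'}^{\oplus n_k}$ I would factor the action as the block permutation $(x_1,\dots,x_{n_k})\mapsto(x_{\sigma^{-1}(1)},\dots,x_{\sigma^{-1}(n_k)})$ followed by the block-diagonal map $\mathrm{diag}(\Sigma_1,\dots,\Sigma_{n_k})$; the latter has determinant $\prod_{j=1}^{n_k}\det(\Sigma_j)=\prod_{j=1}^{n_k}\orient(\Sigma_j)$ by the induction hypothesis, and the former has determinant $(\sgn\sigma)^{m}$ where $m:=\dim\wreathOfW{k-1}{d}{{\bf n}'}=(d-1)(n_1\cdots n_{k-1}-1)$. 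Multiplying the three contributions, the determinant on $\wreathOfW{k}{d}{\bf n}$ equals $\big(\prod_{j}\orient(\Sigma_j)\big)\cdot(\sgn\sigma)^{m+(d-1)}$, and since $m+(d-1)=(d-1)n_1\cdots n_{k-1}$ this is exactly $\orient(\Sigma_1,\dots,\Sigma_{n_k};\sigma)$ from Definition \ref{def: orientation function}, closing the induction.

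The one step that needs an actual argument — the main obstacle, such as it is — is the determinant of a block permutation: interchanging two coordinate blocks of size $m$ is a product of $m$ disjoint transpositions of coordinates, so the permutation of the $n_km$ coordinates induced by $\sigma$ has sign $(\sgn\sigma)^{m}$. The remaining ingredients are routine: checking that the wreath action genuinely preserves the direct sum decomposition of $\wreathOfW{k}{d}{\bf n}$ (immediate from Definition \ref{def: wreath product action}), the elementary identity $(d-1)(n_1\cdots n_{k-1}-1)+(d-1)=(d-1)n_1\cdots n_{k-1}$, and the observation that $\orient$ lands in $\{-1,+1\}$ — visible directly from its defining formula, or from the fact that $\wreathOfSym{k}{\bf n}$ is finite, so that every determinant in sight is a root of unity and hence, being real, equals $\pm1$.
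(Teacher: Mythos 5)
Your proof is correct and follows essentially the same route as the paper: induction on $k$, factoring the action of $(\Sigma_1,\dots,\Sigma_{n_k};\sigma)$ into the block permutation by $\sigma$ (contributing $(\sgn\sigma)^{(d-1)(n_1\cdots n_{k-1}-1)}$ on $\wreathOfW{k-1}{d}{{\bf n}'}^{\oplus n_k}$ and $(\sgn\sigma)^{d-1}$ on $W_{n_k}^{\oplus d-1}$) followed by the block-diagonal action of the $\Sigma_i$'s, with the same exponent bookkeeping. The only cosmetic difference is your base case, which uses the invariant splitting $\R^{n_1}=W_{n_1}\oplus\langle(1,\dots,1)\rangle$ and determinants in place of the paper's observation that each transposition acts on $W_{n_1}$ as a hyperplane reflection.
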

\begin{proof}
	The proof is conducted by induction on $k \ge 1$. 
	
	\medskip
	The base case $k = 1$ is treated in \cite[Sec.\,4, p.\,69]{BlagojevicZiegler15}. 
	Indeed, each transposition $\tau_{ij} \in \sym_n$ acts on $W_n$ by reflection in the hyperplane $x_i = x_j$, so a permutation $\sigma \in \sym_n$ reverses the orientation on $W_n$ by $\sgn \sigma$. 
	Thus, $\sigma$ changes the orientation of $W_n^{\oplus d-1}$ by $(\sgn \sigma)^{d-1}$.
	
	\medskip
	Let $k \ge 2$. 
	We can split the action of an element 
	\[
		((\Sigma_i); \sigma) := (\Sigma_1, \dots , \Sigma_{n_k};\sigma) \in \wreathOfSym{k-1}{{\bf n}'}^{\times n_k} \rtimes \sym_{n_k} = \wreathOfSym{k}{\bf n}
	\]
	on a vector 
	\[
		((V_i); v) := (V_1, \dots , V_{n_k};v) \in \wreathOfW{k-1}{d-1}{{\bf n}'}^{\oplus n_k} \oplus W_{n_k}^{\oplus d-1} = \wreathOfW{k}{d-1}{\bf n}
	\]
	in two steps $(\star)$ and $(\star\star)$ as follows. 
	We have
	\[
		((\Sigma_i);\sigma) \cdot ((V_i); v) = ((\Sigma_i); \id) \cdot ((\id); \sigma) \cdot ((V_i); v) \overset{(\star)}{=} ((\Sigma_i); \id) \cdot ((V_{\sigma^{-1}(i)}); \sigma v) \overset{(\star\star)}{=} ((\Sigma_iV_{\sigma^{-1}(i)}); \sigma v).
	\]
	In the step $(\star)$ permutation $\sigma \in \sym_{n_k}$ acts on $v \in W_{n_k}^{\oplus d-1}$ by changing the orientation by $(\sgn \sigma)^{d-1}$, and permutes the $n_k$ vectors $V_i \in \wreathOfW{k-1}{d-1}{{\bf n}'}$ changing the orientation by 
	\[
		(\sgn \sigma)^{\dim \wreathOfW{k-1}{d-1}{{\bf n}'}} = (\sgn \sigma)^{(d-1)(n_1 \cdots n_{k-1} - 1)}.
	\]
	In the step $(\star\star)$ each of the $n_k$ elements $\Sigma_i \in \wreathOfSym{k-1}{{\bf n}'}$ acts on $V_{\sigma^{-1}(i)} \in \wreathOfW{k-1}{d-1}{{\bf n}'}$ and changes the orientation by $\orient(\Sigma_i)$. 
	
	\medskip
	In total, an element $((\Sigma_1, \dots , \Sigma_{n_k});\sigma) \in \wreathOfSym{k}{\bf n}$ acts on the vector space $\wreathOfW{k}{d-1}{\bf n}$ and changes the orientation by
	\[
		(\sgn \sigma)^{d-1} \cdot (\sgn \sigma)^{(d-1)(n_1 \cdots n_{k-1} - 1)} \cdot \orient(\Sigma_1) \cdots \orient(\Sigma_{n_k}) = \orient(\Sigma_1, \dots , \Sigma_{n_k};\sigma)
	\]
	as claimed.
\end{proof}

\medskip
To evaluate the obstruction cocycle, we use the $\wreathOfSym{k}{\bf n}$-equivariant linear projection
\[
	f_k\colon \wreathOfW{k}{d}{\bf n} \longrightarrow \wreathOfW{k}{d-1}{\bf n},
\]
given by forgetting the $d$th coordinate.
It serves as a general position map and is defined inductively as follows.
\begin{compactitem}[\quad --]
	\item For $k = 1$ the map $f_1\colon W_n^{\oplus d} \to W_n^{\oplus d-1}$ forgets the $d$th coordinate.
	
	\item For $k \ge 2$ we define $f_k$ to be the $\wreathOfSym{k}{\bf n}$-equivariant map
	\[
		(f_{k-1})^{\oplus n_k} \oplus f_1: \wreathOfW{k-1}{d}{{\bf n}'}^{\oplus n_k} \oplus W_{n_k}^{\oplus d} \longrightarrow \wreathOfW{k-1}{d-1}{{\bf n}'}^{\oplus n_k} \oplus W_{n_k}^{\oplus d-1}.
	\]
\end{compactitem}
Since by construction we have $\cellWreathOfConf{1}{d}{n} = \cellConf(d, n) \subseteq W_n^{\oplus d} = \wreathOfW{1}{d}{n}$ (see \cite[Sec.\,3]{BlagojevicZiegler15}), by restriction of the domain we can speak of the $\wreathOfSym{1}{n}$-equivariant map
\[
	f_1\colon\cellWreathOfConf{1}{d}{n} \longrightarrow \wreathOfW{1}{d-1}{n}.
\]
By induction on $k \ge 1$ it follows that $\cellWreathOfConf{k}{d}{\bf n} \subseteq \wreathOfW{k}{d}{\bf n}$, so we may speak about the $\wreathOfSym{k}{\bf n}$-equivariant map
\[
	f_k\colon\cellWreathOfConf{k}{d}{\bf n} \longrightarrow  \wreathOfW{k}{d-1}{\bf n}.
\]

\medskip
\begin{lemma} \label{lem: cell orientations and c_f(e) = +1}
	Let $k \ge 1$, $d \ge 1$ and $n_1, \dots , n_k \ge 2$ be integers. 
	Then the following statements hold.
	\begin{enumerate}[\quad \rm(i)]
		\item\label{lem: cell orientations and c_f(e) = +1 -- 01} The linear map $f_k$ maps all $M_k$-cells of $\cellWreathOfConf{k}{d}{n_1, \dots , n_k}$ by a cellular homeomorphism to the same star-shaped $\wreathOfSym{k}{n_1, \dots , n_k}$-neighbourhood $\B_k \subseteq \wreathOfW{k}{d-1}{n_1, \dots , n_k}$ of the origin.

		\item\label{lem: cell orientations and c_f(e) = +1 -- 02} There exists an orientation of the $M_k$- and $(M_k-1)$-cells of the cell complex $\cellWreathOfConf{k}{d}{n_1, \dots , n_k}$ such that the cellular action of $\wreathOfSym{k}{n_1, \dots , n_k}$ on $M_k$- and $(M_k-1)$-cells changes the orientation according to the orientation function $\orient$ defined in Definition \ref{def: orientation function}.
		
		\item\label{lem: cell orientations and c_f(e) = +1 -- 03} Assuming the orientations of the cells of $\cellWreathOfConf{k}{d}{n_1, \dots , n_k}$ from part \eqref{lem: cell orientations and c_f(e) = +1 -- 02}, the obstruction cocycle $c(f_k)$ has the value $+1$ on all oriented $M_k$-cells of $\cellWreathOfConf{k}{d}{n_1, \dots , n_k}$.
		
		\item\label{lem: cell orientations and c_f(e) = +1 -- 04} Assuming the orientations of the cells of $\cellWreathOfConf{k}{d}{n_1, \dots , n_k}$ from part \eqref{lem: cell orientations and c_f(e) = +1 -- 02}, the following formula of cellular chains holds
		\[
			\partial e_k = \sum_{b \in B_k} \sgn_k(b) \cdot e(b) \in C_{M_k - 1}(\cellWreathOfConf{k}{d}{n_1, \dots , n_k}),
		\]
		where $e(b)$ denotes the $(M_k-1)$-cell in the boundary of $e_k$ indexed by $b \in B_k$ in light of Lemma \ref{lem: orbits of M and (M-1)-cells and B_k}\eqref{lem: orbits of M and (M-1)-cells and B_k -- 02} , and $\sgn_k(b) \in \{-1, +1\}$ is a sign function constant on each $\wreathOfSym{k}{n_1, \dots , n_k}$-orbit.
	\end{enumerate}
\end{lemma}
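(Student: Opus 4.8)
The plan is to prove all four statements simultaneously by induction on $k\ge1$, following the inductive constructions of the complex $\cellWreathOfConf{k}{d}{\bf n}$, of the general-position map $f_k$, of the representation $\wreathOfW{k}{d}{\bf n}$, and of the orientation function $\orient$. In the base case $k=1$ everything is contained in, or is an immediate consequence of, the analysis of the Salvetti-type model $\cellConf(d,n)\subseteq W_n^{\oplus d}=\wreathOfW{1}{d+1}{n}$ in \cite[Sec.\,3, Lem.\,4.1]{BlagojevicZiegler15}: the $n!$ maximal (``chamber'') cells are each carried by the coordinate projection $f_1\colon W_n^{\oplus d}\to W_n^{\oplus d-1}$ homeomorphically onto one and the same star-shaped neighbourhood $\B_1\subseteq\wreathOfW{1}{d}{n}$ of the origin, establishing \eqref{lem: cell orientations and c_f(e) = +1 -- 01}; the cells are oriented so that $\sym_n$ acts through $(\sgn)^{d-1}=\orient$, establishing \eqref{lem: cell orientations and c_f(e) = +1 -- 02}; since $0$ lies in the interior of $\B_1$, the composite $r\circ f_1\colon\partial e_1\xrightarrow{\,\cong\,}\partial\B_1\xrightarrow{\,\cong\,}S(\wreathOfW{1}{d}{n})$ is a homeomorphism of $(M_1-1)$-spheres, hence of degree $\pm1$, and with those orientations and with the generator $\xi_1$ chosen as the orientation class of a fixed orientation $\omega$ of $\wreathOfW{1}{d}{n}$ this degree is $+1$, establishing \eqref{lem: cell orientations and c_f(e) = +1 -- 03}; finally \eqref{lem: cell orientations and c_f(e) = +1 -- 04} is the boundary computation recalled in \eqref{eq: bdry cell of ch(id, d)}, in which the coefficient of $\check c(\id_J,{\bf d}_{|J|})$ in $\partial\check c(\id,{\bf d})$ depends only on $|J|$ and is therefore constant on $\sym_n$-orbits.

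For the inductive step I would use $\cellWreathOfConf{k}{d}{\bf n}=\cellWreathOfConf{k-1}{d}{{\bf n}'}^{\times n_k}\times\cellConf(d,n_k)$ together with $f_k=(f_{k-1})^{\times n_k}\times f_1$, and the fact that every maximal cell is a product of maximal cells of the factors, $e_k=(e_{k-1})^{\times n_k}\times\check c(\id,{\bf d})$ among them. Statement \eqref{lem: cell orientations and c_f(e) = +1 -- 01} holds because a Cartesian product of homeomorphisms is a homeomorphism and a Cartesian product of sets star-shaped at the origin is star-shaped at the origin, so every maximal cell is carried by $f_k$ homeomorphically onto $\B_k:=\B_{k-1}^{\times n_k}\times\B_1$, which is moreover $\wreathOfSym{k}{\bf n}$-invariant since the action merely permutes the $\B_{k-1}$-blocks. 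For \eqref{lem: cell orientations and c_f(e) = +1 -- 02} I would fix an orientation $\omega$ of $\wreathOfW{k}{d}{\bf n}$, orient each maximal cell by pulling $\omega$ back along $f_k$, and orient each codimension-one cell by pulling back the orientation that the corresponding facet of $\partial\B_k$ receives from $(\B_k,\omega)$ under the outward-normal convention; since $f_k$ is $\wreathOfSym{k}{\bf n}$-equivariant, $\B_k$ is $\wreathOfSym{k}{\bf n}$-invariant, and $\wreathOfSym{k}{\bf n}$ changes $\omega$ by the factor $\orient$ by Lemma \ref{lem: wreathSym action on wreathW}, these induced cell orientations change by $\orient$ as well. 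Statement \eqref{lem: cell orientations and c_f(e) = +1 -- 03} then repeats the base-case argument verbatim: by \eqref{lem: cell orientations and c_f(e) = +1 -- 01} each maximal cell is carried homeomorphically onto the star-shaped $\B_k$, so $r\circ f_k\colon\partial e\to S(\wreathOfW{k}{d}{\bf n})$ has degree $\pm1$, normalised to $+1$ by the pullback orientation of $e$ and the choice of $\xi_k$ as the orientation class of $\omega$.

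For \eqref{lem: cell orientations and c_f(e) = +1 -- 04} I would expand $\partial e_k$ by repeated application of the Leibniz rule $\partial(a\times b)=\partial a\times b+(-1)^{\dim a}\,a\times\partial b$ to $e_k=(e_{k-1})^{\times n_k}\times\check c(\id,{\bf d})$, then substitute the inductive formula for $\partial e_{k-1}$ and the base-case formula for $\partial\check c(\id,{\bf d})$. This assigns the coefficient $(-1)^{(i-1)M_{k-1}}\sgn_{k-1}(b)$ to the codimension-one cell indexed by $(b,i)\in B_{k-1}\times[n_k]$, and $(-1)^{n_kM_{k-1}}$ times the ($|J|$-dependent) base-case coefficient to the cell indexed by $J\in\binom{[n_k]}{m}$. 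Comparing with the orbit stratification of $B_k$ in Lemma \ref{lem: orbits of M and (M-1)-cells and B_k}\eqref{lem: orbits of M and (M-1)-cells and B_k -- 03}, and using the inductive orbit-constancy of $\sgn_{k-1}$, shows that $\sgn_k$ is constant on each $\wreathOfSym{k}{\bf n}$-orbit.

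The step I expect to be the main obstacle is \eqref{lem: cell orientations and c_f(e) = +1 -- 02} for the codimension-one cells together with its interaction with \eqref{lem: cell orientations and c_f(e) = +1 -- 04}: one has to fix a single bookkeeping convention -- an ordering of the product factors and a precise boundary orientation of $\partial\B_k$ -- in such a way that the product orientations assigned to the codimension-one cells in \eqref{lem: cell orientations and c_f(e) = +1 -- 02} are exactly the ones appearing with the asserted Koszul signs in the Leibniz expansion used for \eqref{lem: cell orientations and c_f(e) = +1 -- 04}, and in such a way that their equivariance by $\orient$ is compatible with the equivariance used to evaluate the obstruction cocycle in \eqref{lem: cell orientations and c_f(e) = +1 -- 03}. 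Tracking these signs coherently through the induction is where essentially all the work lies; the underlying topology (products of homeomorphisms onto star-shaped sets, and the degree of a homeomorphism between spheres) is elementary.
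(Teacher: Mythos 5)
Your proposal is correct and follows essentially the same route as the paper: the same simultaneous induction on $k$, the same choice $\B_k=\B_{k-1}^{\times n_k}\times\B_1$, orientations pulled back along the equivariant cellular homeomorphism $f_k$ so that equivariance by $\orient$ follows from Lemma \ref{lem: wreathSym action on wreathW}, the degree-$+1$ evaluation of $c(f_k)$, and the product boundary formula with Koszul signs for part (iv). Your only deviation --- orienting the codimension-one cells via the outward-normal convention on $\partial\B_k$ instead of the product orientations used in the paper --- is immaterial, since the discrepancy between the two conventions is itself constant on $\wreathOfSym{k}{n_1,\dots,n_k}$-orbits and part (iv) only asserts orbit-constancy of the sign function.
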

\begin{proof}
	All statements are proved by the (same) induction on $k \ge 1$. 
	The base case $k=1$ is treated in \mbox{\cite[Lem.\,4.1]{BlagojevicZiegler15}}. 
	There, the $(M_1-1)$-cells in $\partial \B_1$ are oriented such that they all appear with a sign $+1$ in the cellular boundary formula of $\B_1$. 
	
	\medskip
	Now, assume that $k \ge 2$.
	\begin{compactitem} [\quad --]
		\item We define 
		$
			\B_k := (\B_{k-1})^{\times n_k} \times \B_1
		$,
		where $\B_{k-1} \subseteq \wreathOfW{k-1}{d-1}{{\bf n}'}$ is the star-shaped neighbourhood from the induction hypothesis. Any $M_k$-cell $e$ of $\cellWreathOfConf{k}{d}{\bf n}$ equals to the product of $n_k$ maximal cells of $\cellWreathOfConf{k-1}{d}{{\bf n}'}$ and a maximal cell of $\cellConf(d, n_k)$. Therefore, by induction hypothesis and the inductive definition of the map $f_k$, it follows that $f_k$ restricted to the cell $e$ is cellular homeomorphism.
		
		\item Let $\B_k$ and the $(M_k-1)$-cells in $\partial \B_k$ be endowed with the product orientation. The map $f_k$ sends each $M_k$- and $(M_k-1)$-cell of $\cellWreathOfConf{k}{d}{\bf n}$ homeomorphically to $\B_k$ and a $(M_k-1)$-cell in the boundary of $\B_k$, respectively. Thus, we can set orientation on each $M_k$- and $(M_k-1)$-cell of $\cellWreathOfConf{k}{d}{\bf n}$ such that $f_k$ restricted to them is orientation preserving. By Lemma \ref{lem: wreathSym action on wreathW} the $\wreathOfSym{k}{\bf n}$-action on $\B_k$ changes the orientation by the sign given by orientation function $\orient$, so the same is true on the cells of $\cellWreathOfConf{k}{d}{\bf n}$.
		
		\item Since $f_k$ is orientation preserving cellular homeomorphism, we have		
		\[
			c(f_k)(e) = \deg(r \circ f_k\colon \partial e \longrightarrow \wreathOfW{k}{d-1}{\bf n} \setminus \{0\} \longrightarrow \partial \B_k) = 1,
		\]
		for each $M_k$-cell $e \subseteq \cellWreathOfConf{k}{d}{\bf n}$.
		
		\item Since $\cellWreathOfConf{k}{d}{\bf n}$ is regular, we need to show only that $\sgn(b) = \sgn(b')$ for any $b,b' \in B_k$ in the same orbit. By the boundary of the cross-product formula for $e_k = (e_{k-1})^{\times n_k} \times \check{c}(\id, {\bf d})$, we have the following equality of cellular $(M_k-1)$-chains:
		\begin{equation*}
			\partial e_k = \sum_{i=1}^{n_k} (-1)^{(i-1)M_{k-1}} e_{k-1}^{i-1} \times \partial e_{k-1} \times e_{k-1}^{n_k-i} \times \check{c}(\id, {\bf d}) + (-1)^{n_kM_{k-1}}e_{k-1}^{n_k} \times \partial \check{c}(\id, {\bf d}).
		\end{equation*}
		By the induction hypothesis, we have
		\[
			\partial e_{k-1} = \sum_{c \in B_{k-1}} \sgn_{k-1}(c) \cdot e(c) \qquad \text{ and } \qquad \partial\check{c}(\id, {\bf d}) = \sum_{a \in B_1} \sgn_1(a) \check{c}(a).
		\]
		Moreover, we have $\sgn_{k-1}(c) = \sgn_{k-1}(c')$ if $c,c' \in B_{k-1}$ are in the same $\wreathOfSym{k-1}{{\bf n}'}$-orbit and $\sgn_1(a) = \sgn_1(a')$ if $a,a' \in B_{1}$ are in the same $\sym_{n_k}$-orbit. Similarly to the proof of Lemma \ref{lem: orbits of M and (M-1)-cells and B_k}, boundary cells of $e_k$ are divided into two types.
		\begin{compactenum}[\quad (1)]
			\item Cells of the first type are of the form 
			\[
				(e_{k-1})^{\times n_k} \times e(c) \times (e_{k-1})^{\times n_k-i} \times \check{c}(\id, {\bf d})
			\]
			for $1 \le i \le n_k$ and $c \in B_{k-1}$. By part (iii) of Lemma \ref{lem: orbits of M and (M-1)-cells and B_k} or its proof, it is seen that such a cell receives a label
			\[
				(c,i) \in B_{k-1} \times [n_k] \subseteq B_k.
			\]
			Moreover, two cells labeled by $(c,i), (c', i') \in B_{k-1} \times [n_k] \subseteq B_k$ are in the same $\wreathOfSym{k}{\bf n}$-orbit if and only if $i = i'$ and $c, c' \in B_{k-1}$ are in the same $\wreathOfSym{k-1}{{\bf n}'}$-orbit. Therefore, by the inductive sign formula for the cells of the first type
			\[
				\sgn_k(c,i) = (-1)^{(i-1)M_{k-1}} \cdot \sgn_{k-1}(c)
			\]
			the claim follows for the boundary cells of the first type.
			
			\item Cells of the second type are of the form 
			\[
				(e_{k-1})^{\times n_k} \times \check{c}(a)
			\]
			for $a \in B_{1}$. Again by part (iii) of Lemma \ref{lem: orbits of M and (M-1)-cells and B_k} or its proof, it is seen that such a cell receives a label $a \in B_1 \subseteq B_k$. Moreover, two cells labeled by $a, a' \in B_1 \subseteq B_k$ are in the same $\wreathOfSym{k}{\bf n}$-orbit if and only if $a,a' \in B_1$ are in the same $\sym_{n_k}$-orbit. Therefore, by the inductive sign formula for the cells of the second type
			\[
				\sgn_k(a) = (-1)^{n_kM_{k-1}} \cdot \sgn_1(a)
			\]
			the claim follows for the boundary cells of the second type.
		\end{compactenum}
		Finally, by part (iii) of Lemma \ref{lem: orbits of M and (M-1)-cells and B_k} or its proof, no cell of the first type is in the same orbit as a cell of the second type, so the proof is completed. 	
	\end{compactitem}
\end{proof}

\medskip
\subsection{When is the obstruction cocycle a coboundary?} \label{subsection: when is the obstruction cocycle a coboundary?}

In this section we discuss when the computed cocycle $c(f_k)$ is a coboundary, and consequently complete the proof of Theorem \ref{theorem: main}. 
The orientation of cells of the complex $\cellWreathOfConf{k}{d}{\bf n}$ is understood to be the one from Lemma \ref{lem: cell orientations and c_f(e) = +1}\eqref{lem: cell orientations and c_f(e) = +1 -- 02}. The following lemma is a generalisation of the $k = 1$ case treated in \cite[Lem.\,4.2]{BlagojevicZiegler15}.

\medskip
\begin{lemma} \label{lem: coboundary of (M-1)-cochain}
	Let $k \ge 1$, $d \ge 1$ and $n_1, \dots , n_k \ge 2$ be integers. Then the following statements hold.
	\begin{compactenum} [\quad \rm (i)]
		\item\label{lem: coboundary of (M-1)-cochain -- 01} The value of the coboundary $\delta w$ of any equivariant cellular cochain 
		\[
			w \in C^{M_k-1}_{\wreathOfSym{k}{n_1, \dots , n_k}}(\cellWreathOfConf{k}{d}{n_1, \dots , n_k};\ZZ_k)
		\]
		is the same on each $M_k$-cell of $\cellWreathOfConf{k}{d}{n_1, \dots , n_k}$ and is equal to the $\Z$-linear combination of binomial coefficients
		\begin{equation} \label{eq: binomial coefficients}
			\binom{n_1}{1}, \dots , \binom{n_1}{n_1 - 1}, \dots , \binom{n_k}{1}, \dots , \binom{n_k}{n_k - 1}.	
		\end{equation}

		\item\label{lem: coboundary of (M-1)-cochain -- 02} For any $\Z$-linear combination of binomial coefficients \eqref{eq: binomial coefficients}, there exists a cochain whose coboundary takes precisely that value on all $M_k$-cells of $\cellWreathOfConf{k}{d}{n_1, \dots , n_k}$.
	\end{compactenum} 
\end{lemma}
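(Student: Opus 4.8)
\subsubsection*{Proof plan}

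The plan is to reduce both statements to the cellular boundary formula for the orbit--representative maximal cell $e_k$ from Lemma~\ref{lem: cell orientations and c_f(e) = +1}\eqref{lem: cell orientations and c_f(e) = +1 -- 04}, combined with the orbit description of the codimension one boundary cells of $e_k$ in Lemma~\ref{lem: orbits of M and (M-1)-cells and B_k}. Throughout, I would work with the orientations of the $M_k$- and $(M_k-1)$-cells of $\cellWreathOfConf{k}{d}{\bf n}$ fixed in Lemma~\ref{lem: cell orientations and c_f(e) = +1}\eqref{lem: cell orientations and c_f(e) = +1 -- 02}, for which the cellular $\wreathOfSym{k}{\bf n}$-action changes orientation by the scalar $\orient(g)$, and with $\ZZ_k = \langle\xi_k\rangle$ carrying the module structure \eqref{eq: wreathSym-module structure on ZZ}.

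The first step is a compatibility remark: any equivariant cellular $\ZZ_k$-cochain $v$ (in degree $M_k$ or $M_k-1$) is constant on $\wreathOfSym{k}{\bf n}$-orbits of oriented cells. Indeed, if $g\cdot c = \orient(g)\,c'$ for cells $c, c'$ in the same orbit --- which is exactly how the action permutes cells, by Lemma~\ref{lem: cell orientations and c_f(e) = +1}\eqref{lem: cell orientations and c_f(e) = +1 -- 02} --- then equivariance of $v$ together with \eqref{eq: wreathSym-module structure on ZZ} gives $\orient(g)\,v(c') = v(g\cdot c) = g\cdot v(c) = \orient(g)\,v(c)$, hence $v(c') = v(c)$. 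Applied to $v = \delta w$ and to the single orbit of maximal cells (Lemma~\ref{lem: orbits of M and (M-1)-cells and B_k}\eqref{lem: orbits of M and (M-1)-cells and B_k -- 01}), this already gives the first assertion of part~\eqref{lem: coboundary of (M-1)-cochain -- 01}: $\delta w$ is constant on all $M_k$-cells, so it suffices to evaluate it on $e_k$.

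The second step is the computation. Using $\delta w(e_k) = w(\partial e_k)$ and the boundary formula of Lemma~\ref{lem: cell orientations and c_f(e) = +1}\eqref{lem: cell orientations and c_f(e) = +1 -- 04},
\[
	\delta w(e_k) \;=\; \sum_{b\in B_k}\sgn_k(b)\, w(e(b)).
\]
The sign $\sgn_k(b)$ is constant on each $\wreathOfSym{k}{\bf n}$-orbit of $B_k$ by Lemma~\ref{lem: cell orientations and c_f(e) = +1}\eqref{lem: cell orientations and c_f(e) = +1 -- 04}, and $w(e(b))$ is constant on each such orbit by the first step; since by Lemma~\ref{lem: orbits of M and (M-1)-cells and B_k}\eqref{lem: orbits of M and (M-1)-cells and B_k -- 03} the orbits of $B_k$ are the strata $\binom{[n_i]}{m}\times\{{\bf j}\}$, each of cardinality $\binom{n_i}{m}$, grouping the sum by orbits yields
\[
	\delta w(e_k) \;=\; \sum_{i=1}^{k}\ \sum_{m=1}^{n_i-1}\ \sum_{{\bf j}\in[n_{i+1}]\times\dots\times[n_k]} \binom{n_i}{m}\,\sgn_k(i,m,{\bf j})\,w_{i,m,{\bf j}},
\]
where $w_{i,m,{\bf j}}\in\ZZ_k\cong\Z$ is the value of $w$ on a chosen representative of the orbit labelled by $(i,m,{\bf j})$. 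Under the identification $\ZZ_k\cong\Z$ this is a $\Z$-linear combination of the binomial coefficients \eqref{eq: binomial coefficients}, proving part~\eqref{lem: coboundary of (M-1)-cochain -- 01}. For part~\eqref{lem: coboundary of (M-1)-cochain -- 02} I would invoke that $\cellWreathOfConf{k}{d}{\bf n}$ is a free $\wreathOfSym{k}{\bf n}$-CW complex, so that an equivariant $\ZZ_k$-cochain can be prescribed arbitrarily on a set of orbit representatives of its cells: given a target $\sum_{i,m}a_{i,m}\binom{n_i}{m}$ with $a_{i,m}\in\Z$, for each pair $(i,m)$ fix one multi-index ${\bf j}_0$, set $w_{i,m,{\bf j}_0} := \sgn_k(i,m,{\bf j}_0)\cdot a_{i,m}$, and let $w$ vanish on all the remaining orbit representatives of $(M_k-1)$-cells; the displayed formula then gives $\delta w(e_k) = \sum_{i,m}a_{i,m}\binom{n_i}{m}$, and by part~\eqref{lem: coboundary of (M-1)-cochain -- 01} this is the value on every $M_k$-cell.

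The argument is mostly bookkeeping; the single point requiring genuine care is the compatibility used in the first step, namely matching the orientation-reversing cellular action with the action on the coefficient module $\ZZ_k$, so that equivariant cochains are honestly orbit-constant. This is exactly why the cell orientations were synchronised in advance with the orientation function $\orient$ in Lemma~\ref{lem: cell orientations and c_f(e) = +1}\eqref{lem: cell orientations and c_f(e) = +1 -- 02}; granting that, everything reduces to counting the codimension one faces of $e_k$ orbit by orbit, which is delivered by Lemma~\ref{lem: orbits of M and (M-1)-cells and B_k}.
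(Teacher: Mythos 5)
Your proposal is correct and follows essentially the same route as the paper: orbit-constancy of equivariant cochains (using the matching of cell orientations with the $\orient$-twisted module structure on $\ZZ_k$), evaluation of $\delta w$ on the representative cell $e_k$ via the boundary formula and the orbit stratification of $B_k$, and, for the converse, prescribing the cochain freely on orbit representatives supported on a single stratum per pair $(i,m)$. The only cosmetic difference is that you absorb the sign into the prescribed value as $\sgn_k(i,m,{\bf j}_0)\cdot a_{i,m}$, exactly as the paper does with $x_{i,m}\cdot\sgn(i,m;{\bf 1})$.
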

\begin{proof}
	Let
	\[
		w \in C^{M_k-1}_{\wreathOfSym{k}{\bf n}}(\cellWreathOfConf{k}{d}{\bf n};\ZZ_k) = \hom_{\wreathOfSym{k}{\bf n}}(C_{M_k - 1}(\cellWreathOfConf{k}{d}{\bf n}), \ZZ_k)
	\]
	be a cellular cochain. 
	The group $\wreathOfSym{k}{\bf n}$:
	\begin{compactitem}[\quad --]
		\item changes the orientation of any $M_k$- or $(M_k-1)$-cell according to the orientation function $\orient$ (consult Lemma \ref{lem: cell orientations and c_f(e) = +1} (ii)), and
		\item  acts on $\ZZ_k$ by multiplication with the value of $\orient$ (see \eqref{eq: wreathSym-module structure on ZZ}).
	\end{compactitem}
	Therefore, an $\wreathOfSym{k}{\bf n}$-module morphism of the form
	\[
		C_{M_k-1}(\cellWreathOfConf{k}{d}{\bf n}) \longrightarrow \ZZ_k ~~~ \text{ or} ~~~~ C_{M_k}(\cellWreathOfConf{k}{d}{\bf n}) \longrightarrow \ZZ_k
	\]
	is constant on each $\wreathOfSym{k}{\bf n}$-orbit.
	
	\medskip
	In particular, since $M_k$-cells form a single $\wreathOfSym{k}{\bf n}$-orbit by Lemma \ref{lem: orbits of M and (M-1)-cells and B_k}\eqref{lem: orbits of M and (M-1)-cells and B_k -- 01}, it follows that the coboundary $\delta w$ has the same value on all $M_k$-cells. 
	Therefore, we restrict our attention to the orbit representative $M_k$-cell $e_k$ introduced in Definition \ref{def: orbit representative maximal cell} and the value $(\delta w)(e_k) = w(\partial e_k)$.
	
	\medskip
	From Lemma \ref{lem: cell orientations and c_f(e) = +1}\eqref{lem: cell orientations and c_f(e) = +1 -- 04} we have that 
	\[
		\partial e_k = \sum_{b \in B_k} \sgn(b) \cdot e(b) \in C_{M_k - 1}(\cellWreathOfConf{k}{d}{\bf n}),
	\]
	with the sign function $\sgn$ being constant on $\wreathOfSym{k}{\bf n}$-orbits of $(M_k-1)$-cells. 
	According to Lemma \ref{lem: orbits of M and (M-1)-cells and B_k}\eqref{lem: orbits of M and (M-1)-cells and B_k -- 03}, the $\wreathOfSym{k}{\bf n}$-orbit stratification of the index set $B_k$ is given by
	\[
		\Big\{\binom{[n_i]}{m} \times \{{\bf j}\}:~ 1 \le i \le k,~ 1 \le m \le n_i - 1,~ {\bf j} \in [n_{i+1}] \times \dots \times [n_{k}] \Big\},
	\]
	and we may denote by $w(i, m; {\bf j})$ and $\sgn(i, m; {\bf j})$ the values of $w$ and $\sgn$ on the orbit containing the corresponding stratum. With this notation in hand, we have
	\begin{equation} \label{eq: (delta w)(e_k)}
		(\delta w)(e_k) = w(\partial e_k) = \sum_{1 \le i \le k} ~\sum_{1 \le m \le n_i - 1} ~ \sum_{{\bf j} \in [n_{i+1}] \times \dots \times [n_{k}]} \binom{n_i}{m}\sgn(i,m;{\bf j}) w(i,m;{\bf j}),
	\end{equation}
	which proves part \eqref{lem: coboundary of (M-1)-cochain -- 01} of the lemma. 
	
	\medskip
	To prove part \eqref{lem: coboundary of (M-1)-cochain -- 02}, let us show that any $\Z$-linear combination	
	\[
		\sum_{1 \le i \le k} ~\sum_{1 \le m \le n_i - 1} x_{i,m} \cdot \binom{n_i}{m}
	\]
	is equal to the value $(\delta w)(e_k)$, for some cellular $(M_k-1)$-cochain $w$. The value of $w$ can be chosen independently on each orbit. For example, one can set $w$ to be the equivariant extension of the map given on boundary cells of $e_k$ as
	\[
		w(e(b)) := \begin{cases}
			x_{i, m} \cdot \sgn(i,m;(1, \dots , 1)), & b \in \binom{[n_i]}{m} \times \{(1, \dots , 1)\} \text{ for some } i \in [k], m \in [n_i-1],\\
			0, & \text{otherwise},
		\end{cases}
	\]
	for each $b \in B_k$. Indeed, by \eqref{eq: (delta w)(e_k)} we have
	\[
		(\delta w)(e_k) = \sum_{1 \le i \le k} ~ \sum_{1 \le m \le n_i-1} \binom{n_i}{m}\sgn(i,m;{\bf 1})w(i,m;{\bf 1}) = \sum_{1 \le i \le k} ~ \sum_{1 \le m \le n_i-1} \binom{n_i}{m} x_{i,m},
	\]
	where ${\bf 1} := (1, \dots ,1) \in [n_{i+1}] \times \dots \times [n_k]$ is the notation in each summand.
\end{proof}

\medskip
The next elementary consequence of the Ram's result \cite{Ram09} is used in the proof of Theorem \ref{theorem: main}.

\medskip
\begin{lemma} \label{lem: gcd of binoms}
	Let $k \ge 1$, $d \ge 1$ and $n_1, \dots , n_k \ge 2$ be integers. 
	Then we have that
	\begin{equation*}
		\gcd\Big\{\binom{n_i}{1}, \dots , \binom{n_i}{n_i - 1}:~ 1 \le i \le k\Big\} = \begin{cases}
			p, & \text{if all } n_i \text{ are powers of the same prime } p,\\
			1, & \text{otherwise.}
		\end{cases}
	\end{equation*}
\end{lemma}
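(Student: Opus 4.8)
The plan is to reduce everything to the classical single-modulus computation, which is precisely the content of Ram's theorem \cite{Ram09}: for any integer $n \ge 2$ one has
\[
	\gcd\Big\{\binom{n}{1}, \dots, \binom{n}{n-1}\Big\} =
	\begin{cases}
		p, & \text{if } n = p^a \text{ for a prime } p \text{ and some } a \ge 1,\\
		1, & \text{otherwise.}
	\end{cases}
\]
Here the parameter $d$ plays no role whatsoever; it is carried along only for uniformity with the rest of the section. Writing $g_i := \gcd\{\binom{n_i}{m} : 1 \le m \le n_i - 1\}$ for $1 \le i \le k$, the quantity appearing in the lemma is the greatest common divisor of a union of finite sets of integers, hence equals $\gcd(g_1, \dots, g_k)$. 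So it suffices to apply Ram's theorem to each $n_i$ separately and then intersect the outcomes.

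First I would treat the case in which all of $n_1, \dots, n_k$ are powers of one fixed prime $p$. Then Ram's theorem gives $g_i = p$ for every $i$, so $\gcd(g_1, \dots, g_k) = p$, matching the first branch of the claimed formula.

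Next, suppose the $n_i$ are \emph{not} all powers of a single prime. I would split this into two subcases. If some $n_j$ fails to be a prime power, then $g_j = 1$ by Ram's theorem, and therefore $\gcd(g_1, \dots, g_k) = 1$. If instead every $n_i$ is a prime power, then, by the assumption of this case, there are indices $j$ and $\ell$ and distinct primes $p \ne q$ with $n_j = p^a$ and $n_\ell = q^b$; now $g_j = p$ and $g_\ell = q$, so $\gcd(g_1, \dots, g_k)$ divides $\gcd(p, q) = 1$. In both subcases we obtain the value $1$, which is the second branch of the formula, and the proof is complete.

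There is no genuine obstacle here: the entire arithmetic content sits inside Ram's theorem (equivalently, inside Kummer's theorem on the $p$-adic valuation of binomial coefficients, or inside the observation that $p \mid \binom{n}{m}$ for all $0 < m < n$ forces the identity $(1+x)^n \equiv 1 + x^n$ in $\mathbb{F}_p[x]$, hence $n$ to be a power of $p$). The only point demanding a moment's attention is organising the case distinction correctly -- in particular noticing that the negation of ``all $n_i$ are powers of the same prime'' bifurcates into the ``some $n_i$ is not a prime power'' case and the ``two distinct primes occur among the $n_i$'' case -- but each resulting subcase is then an immediate consequence of the single-modulus statement.
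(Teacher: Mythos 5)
Your proposal is correct and follows essentially the same route as the paper: reduce each $n_i$ to its own gcd $g_i$ via Ram's theorem and then observe that $\gcd(g_1,\dots,g_k)$ exceeds $1$ exactly when all the $n_i$ are powers of one common prime. Your explicit bifurcation of the "otherwise" case is just a spelled-out version of the paper's one-line conclusion, so there is nothing to add.
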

\begin{proof}
	By the work of Ram's result \cite{Ram09} it follows that for each $1 \le i \le k$ we have
	\begin{equation*}
		N_i := \gcd\Big\{\binom{n_i}{1}, \dots , \binom{n_i}{n_i - 1}\Big\} = \begin{cases}
			p, & \text{if } n_i \text{ is a power of a prime } p,\\
			1, & \text{otherwise.}
		\end{cases}
	\end{equation*}
	Therefore, we have $\gcd(N_1, \dots , N_k) \neq 1$ if and only if $n_1, \dots n_k$ are all powers of the same prime $p$, in which case we have $\gcd(N_1, \dots , N_k) = p$.
\end{proof}

\medskip
Finally, we are ready to give a proof of Theorem \ref{theorem: main}.

\medskip
\begin{proof}[Proof of Theorem \ref{theorem: main}] 
By \cite[Sec.\,II.3]{tomDieck:TransformationGroups} the equivariant map exists if and only if the cohomology class ${\frak o} = [c(f_k)]$ vanishes. 
This happens if and only if $c(f_k)$ is the coboundary of some equivariant cellular cochain in $C^{M-1}_{\wreathOfSym{k}{\bf n}}(\cellWreathOfConf{k}{d}{\bf n};\ZZ_k)$. 
By Lemmas \ref{lem: cell orientations and c_f(e) = +1} and \ref{lem: coboundary of (M-1)-cochain} this happens if and only if there exists a linear combination of binomial coefficients
	\[
		\binom{n_1}{1}, \dots , \binom{n_1}{n_1 - 1}, \dots , \binom{n_k}{1}, \dots , \binom{n_k}{n_k - 1}
	\]
	which is equal to $1$. This equivalent to the fact that the greatest common divisor of all such binomial coefficients equals $1$, which by Lemma \ref{lem: gcd of binoms} happens if and only if $n_1, \dots , n_k$ are not all powers of the same prime number.
\end{proof}

Moreover, we are able to compute the top equivariant cohomology of the wreath product of configuration spaces with coefficients in the module $\ZZ_k$.

\begin{corollary}
	Let $d\ge 2$, $k \ge 1$ and $n_1, \dots, n_k \ge 2$ be integers. Then we have
	\[
		H^{M_k}_{\wreathOfSym{k}{n_1, \dots, n_k}}(\wreathOfConf{k}{d}{n_1, \dots, n_k};\ZZ_k) = \langle [c(f_k)] \rangle \cong \begin{cases}
			\Z/p & n_1, \dots , n_k \textrm{ are powers of a prime number }p\\
			0 & \textrm{otherwise.}
		\end{cases}
	\]
\end{corollary}
\begin{proof}
	The cellular complex $\cellWreathOfConf{k}{d}{\bf n}$ has a single $\wreathOfSym{k}{\bf n}$-orbit of maximal cells, so
	\[
		C^{M_k}_{\wreathOfSym{k}{\bf n}}(\cellWreathOfConf{k}{d}{\bf n};\ZZ_k) \cong \Z\langle c(f_k) \rangle.
	\]
	By Lemma \ref{lem: coboundary of (M-1)-cochain} it follows that
	\[
		H^{M_k}_{\wreathOfSym{k}{\bf n}}(\cellWreathOfConf{k}{d}{\bf n};\ZZ_k) \cong \Z\langle c(f_k) \rangle/ \Z \langle N \cdot c(f_k) \rangle,
	\]
	where $N := \gcd\{ \binom{n_i}{1}, \dots, \binom{n_i}{n_i-1}:~1 \le i \le k\}$, so the claim follows by Lemma \ref{lem: gcd of binoms}.
\end{proof}

As another consequence of the cellular model, we obtain the following extension of \cite[Cor.~4.7]{BlagojevicZiegler15}, following analogous proof.

\begin{corollary}
	Let $k \ge 1$, $d\ge 2$ and $n_1, \dots, n_k \ge 2$ be integers, and $G \coloneqq \Z_{n_1} \wr \dots \wr \Z_{n_k}$. Let $X$ be a free Hausdorff $G$-space and $f \colon X \longrightarrow \R^d$ a continuous map. If $X$ is $(d-1)(n-1)$-connected, where $n \coloneqq n_1 \dots n_k$, there there exist $x \in X$ and nontrivial $g \in G$ such that
	\[
		f(x) = f(g\cdot x).
	\]
\end{corollary}
\begin{proof}
	Assume the contrary. Then, due to free action on $X$, it follows by induction on $k$ that there exists $G$-equivariant map
	\[
		X \longrightarrow \wreathOfConf{k}{d}{n_1, \dots, n_k},\qquad x \longmapsto(g\cdot x)_{g \in G},
	\]
	where we consider $G \subseteq \wreathOfSym{k}{n_1, \dots, n_k}$. 
	
	\medskip
	Post-composing this with the equivariant retraction $\wreathOfConf{k}{d}{n_1, \dots, n_k} \simeq \cellWreathOfConf{k}{d}{n_1, \dots, n_k}$, we obtain a $G$-equivariant map
	\[
		X \longrightarrow\cellWreathOfConf{k}{d}{n_1, \dots, n_k}
	\]
	from an $(n-1)(d-1)$-connected $G$-space into an $(n-1)(d-1)$-dimensoinal free $G$-space, which contradicts Dold's theorem \cite{Dold83}.
\end{proof}

\subsection{Another proof of the non-existence}

In this section we give another proof of non-existence of an $\wreathOfSym{k}{n_1, \dots, n_k}$-equivariant map
\begin{equation} \label{eq: non-existence map}
	\wreathOfConf{k}{d}{n_1, \dots, n_k} \longrightarrow S(\wreathOfW{k}{d-1}{n_1, \dots, n_k})
\end{equation}
when $n_1, \dots, n_k$ are powers of the same prime. We are thankful to the anonymous referee for pointing out the following argument.

\medskip
Let $n_1=p^{m_1}, \dots, n_k = p^{m_k}$ be powers of a prime number $p$ and let $m := m_1 + \cdots + m_k$.
First, let us notice that we can consider $\wreathOfSym{m}{p, \dots, p} = \aut(P_m(p, \dots, p))$ as a subgroup of $\wreathOfSym{k}{p^{m_1}, \dots, p^{m_k}} = \aut(P_k(p^{m_1}, \dots, p^{m_k}))$, since there is a natural identification of the sets of $p^m$ leaves of the two posets $P_m(p, \dots, p)$ and $P_k(p^{m_1}, \dots, p^{m_k})$ on which the groups are acting.

\medskip
By induction on $k$ and using the map from Lemma \ref{lem: wreathConf to conf} induced by the little cubes operad, there is an $\wreathOfSym{m}{p, \dots, p}$-equivariant map 
\begin{equation} \label{eq: map from Cm(p,...,p) to Ck(n)}
	\wreathOfConf{m}{d}{p, \dots, p} ~ \longrightarrow~ \wreathOfConf{k}{d}{p^{m_1}, \dots, p^{m_k}}.
\end{equation}
The next key ingredient is the following non-existence result from the PhD thesis of Pali\'c \cite[Prop.\,5.4.1]{Palic18}.

\begin{proposition} \label{prop: non-existence by nevena}
	Let $k, d \ge 1$ be integers and $p \ge 2$ be a prime number. Then, there is no $\wreathOfSym{m}{p, \dots, p}$-equivariant map of the form
\begin{equation*}
	\wreathOfConf{m}{d}{p, \dots, p} ~ \longrightarrow~ S(\wreathOfW{m}{d-1}{p, \dots, p}).
\end{equation*}
\end{proposition}

Since $\wreathOfW{m}{d-1}{p, \dots, p}$ and $\wreathOfW{k}{d-1}{p^{m_1}, \dots, p^{m_k}}$ are isomorphic as vector spaces, and after considering $\wreathOfSym{m}{p, \dots, p}$ as a subgroup of $\wreathOfSym{k}{p^{m_1}, \dots, p^{m_k}}$ as before, we see that the vector spaces are moreover isomorphic as $\wreathOfSym{m}{p, \dots, p}$-modules.

Thus, the existence of an $\wreathOfSym{k}{p^{m_1}, \dots, p^{m_k}}$-equivariant map \eqref{eq: non-existence map} would, together with the $\wreathOfSym{m}{p, \dots, p}$-equivariant map \eqref{eq: map from Cm(p,...,p) to Ck(n)}, imply the existence of an $\wreathOfSym{m}{p, \dots, p}$-equivariant map 
\begin{equation*}
	\wreathOfConf{m}{d}{p, \dots, p} ~ \longrightarrow~ S(\wreathOfW{m}{d-1}{p, \dots, p}),
\end{equation*}
which is prohibited by the above proposition.

\medskip
Let us denote by $\wreathOfSym{m}{p, \dots, p}^{(p)}$ the $p$-Sylow subgroup of $\wreathOfSym{m}{p, \dots, p}$. For more details on Sylow subgroups, see for example \cite[Sec.\,4.5]{DummitFoote04}.
What is proved in Pali\'c's thesis \cite[Prop.5.4.1]{Palic18} is actually a little bit stronger than Proposition \ref{prop: non-existence by nevena}. 
Namely, it was shown via equivariant obstruction theory that there is no $\wreathOfSym{m}{p, \dots, p}^{(p)}$-equivariant map of the same form.

\medskip
\section{Existence in Theorem \ref{theorem: main}: the little cubes operad} 
\label{section: existence via operad}
\medskip

In this section we give a short proof of the existence part of Theorem \ref{theorem: main} using the little cubes operad structural map. 
As before, let us set ${\bf n} := (n_1, \dots ,n_k)$.
 Our goal is to show the existence of an $\wreathOfSym{k}{\bf n}$-equivariant map
\[
	\wreathOfConf{k}{d}{\bf n} \longrightarrow S(\wreathOfW{k}{d-1}{\bf n})
\]
in the case when $n_1, \dots , n_k$ are not all powers of the same prime number.

\medskip
In the remainder of the section, we will consider $\wreathOfSym{k}{\bf n}$ as the subgroup of the symmetric group $\sym_n$, where $n := n_1 \cdots n_k$. 
Indeed, $\wreathOfSym{k}{\bf n}$ is the automorphism group of the poset $P_k({\bf n})$ by Lemma \ref{lem: aut = wreathSym}. 
In particular, it permutes the $n$ level $k$ elements, i.e., leaves, which have the symmetry of the group $\sym_n$.
Note that this description specifies the inclusion of $\wreathOfSym{k}{\bf n}$ into  $\sym_n$.

\medskip
Let $(\littleCubeOperad{d}{n})_{n \ge 1}$ denote the {\em little cubes operad} as introduced by May \cite{May72}. See also \cite[Sec.\,7.3]{BCCLZ21} for notation. 
Each $\littleCubeOperad{d}{n}$ is a free $\sym_n$-space. The operad comes with the {\em structural map}
\[
	\mu\colon (\littleCubeOperad{d}{m_1} \times \dots \times \littleCubeOperad{d}{m_k}) \times \littleCubeOperad{d}{k} \longrightarrow \littleCubeOperad{d}{m_1 + \dots + m_k},
\]
for any integers $k \ge 1$ and $m_1, \dots ,m_k \ge 1$. See Figure \ref{fig 05} for illustration.

\begin{figure}[h]
\begin{tikzpicture}[scale = 0.8]
	\node at (-3,0) {$\Bigg($};
	
	\draw[dashed] (-2.5,-1.5) -- (0.5,-1.5);
	\draw[dashed] (0.5,-1.5) -- (0.5,1.5);
	\draw[dashed] (0.5,1.5) -- (-2.5,1.5);
	\draw[dashed] (-2.5,1.5) -- (-2.5,-1.5);
	
	\draw (-2.5,0.25) ++ (1,0) -- ++(1,0) -- ++(0,1) -- ++(-1,0) -- ++(0,-1);
	\draw (-2.5,-1.25) ++ (1,0) -- ++(1,0) -- ++(0,1) -- ++(-1,0) -- ++(0,-1);
	
	\node at (0.75,-0.5) {$,$};
	
	\draw[dashed] (1,-1.5) -- (4,-1.5);
	\draw[dashed] (4,-1.5) -- (4,1.5);
	\draw[dashed] (4,1.5) -- (1,1.5);
	\draw[dashed] (1,1.5) -- (1,-1.5);
	
	\draw (1,0.25) ++ (1,0) -- ++(1,0) -- ++(0,1) -- ++(-1,0) -- ++(0,-1);
	\draw (1.75,-1) ++ (1,0) -- ++(1,0) -- ++(0,1) -- ++(-1,0) -- ++(0,-1);
	\draw (0.25,-1.25) ++ (1,0) -- ++(1,0) -- ++(0,1) -- ++(-1,0) -- ++(0,-1);
	
	\node at (4.25,-0.5) {$ ;$};
	
	\draw[dashed] (4.5,-1.5) -- (7.5,-1.5);
	\draw[dashed] (7.5,-1.5) -- (7.5,1.5);
	\draw[dashed] (7.5,1.5) -- (4.5,1.5);
	\draw[dashed] (4.5,1.5) -- (4.5,-1.5);
	
	\draw (5.25,0.25) ++ (1,0) -- ++(1,0) -- ++(0,1) -- ++(-1,0) -- ++(0,-1);
	\draw (3.75,-1.25) ++ (1,0) -- ++(1,0) -- ++(0,1) -- ++(-1,0) -- ++(0,-1);
	
	\node at (8,0) {$\Bigg )$};
	\node at (8.75,0) {$\longmapsto$};
	
	\draw[dashed] (9.5,-1.5) -- (12.5,-1.5);
	\draw[dashed] (12.5,-1.5) -- (12.5,1.5);
	\draw[dashed] (12.5,1.5) -- (9.5,1.5);
	\draw[dashed] (9.5,1.5) -- (9.5,-1.5);
	
	\draw[dashed] (10.25,0.25) ++ (1,0) -- ++(1,0) -- ++(0,1) -- ++(-1,0) -- ++(0,-1);
	\draw[dashed] (8.75,-1.25) ++ (1,0) -- ++(1,0) -- ++(0,1) -- ++(-1,0) -- ++(0,-1);
	
	\draw (9.1,-0.67) ++ (1,0) -- ++(0.33,0) -- ++(0,0.33) -- ++(-0.33,0) -- ++(0,-0.33);
	\draw (9.1,-1.15) ++ (1,0) -- ++(0.33,0) -- ++(0,0.33) -- ++(-0.33,0) -- ++(0,-0.33);
	
	\draw (10.6,0.85) ++ (1,0) -- ++(0.33,0) -- ++(0,0.33) -- ++(-0.33,0) -- ++(0,-0.33);
	\draw (10.8,0.45) ++ (1,0) -- ++(0.33,0) -- ++(0,0.33) -- ++(-0.33,0) -- ++(0,-0.33);
	\draw (10.35,0.35) ++ (1,0) -- ++(0.33,0) -- ++(0,0.33) -- ++(-0.33,0) -- ++(0,-0.33);
	
\end{tikzpicture}

\caption{\small The structural map $\mu: (\littleCubeOperad{2}{2} \times \littleCubeOperad{2}{3}) \times \littleCubeOperad{2}{2} \to \littleCubeOperad{2}{5}$ of the little cubes operad.}
\label{fig 05}	
\end{figure}

Moreover, we have the following fact \cite[Thm.\,4.8]{May72}. For illustration see Figure \ref{fig 06}.

\medskip
\begin{lemma} \label{lem: littleCube and F are homotopy equivalent}
	Let $d \ge 1$ and $n \ge 1$ be integers. Then, the space $\littleCubeOperad{d}{n}$ is $\sym_n$-homotopy equivalent to the configuration space $\conf(\R^d,n)$.
\end{lemma}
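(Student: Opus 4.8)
The plan is to realise the configuration space as an $\sym_n$-equivariant (strong) deformation retract of $\littleCubeOperad{d}{n}$ via the map that records the centres of the little cubes; this is in substance May's argument for \cite[Thm.\,4.8]{May72}. Fix once and for all a homeomorphism $\R^d \cong (0,1)^d$; it induces an $\sym_n$-equivariant homeomorphism $\conf(\R^d, n) \cong \conf((0,1)^d, n)$, so we may work inside the open unit cube. A point of $\littleCubeOperad{d}{n}$ is an $n$-tuple $(c_1, \dots, c_n)$ of rectilinear embeddings $(0,1)^d \hookrightarrow (0,1)^d$ with pairwise disjoint images; write $y_i \in (0,1)^d$ for the centre of $c_i$ and $b_i \in \R_{>0}^d$ for its vector of half-side-lengths, so that the image of $c_i$ is the open box $\prod_{\ell=1}^d (y_i^\ell - b_i^\ell,\, y_i^\ell + b_i^\ell)$. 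The evaluation map
\[
	\mathrm{ev}\colon \littleCubeOperad{d}{n} \longrightarrow \conf((0,1)^d, n), \qquad (c_1, \dots, c_n) \longmapsto (y_1, \dots, y_n)
\]
is continuous, surjective and $\sym_n$-equivariant, and it is its homotopy behaviour that we must understand.

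First I would construct a continuous $\sym_n$-equivariant section $s$. To a configuration $(y_1, \dots, y_n)$ of distinct points of $(0,1)^d$ associate the positive real numbers
\[
	\beta_i := \tfrac14 \min\Big( \min_{j \ne i} \|y_i - y_j\|_\infty,\ \min_{1 \le \ell \le d} \min(y_i^\ell,\, 1 - y_i^\ell) \Big),
\]
and let $s(y_1, \dots, y_n)$ be the tuple of little cubes centred at the $y_i$ all of whose half-side-lengths equal $\beta_i$. Such a box lies in $(0,1)^d$ since $\beta_i \le \min_\ell \min(y_i^\ell, 1 - y_i^\ell)$, and for $i \ne j$ the boxes even have disjoint closures, because some coordinate $\ell$ satisfies $|y_i^\ell - y_j^\ell| = \|y_i - y_j\|_\infty \ge 4\max(\beta_i, \beta_j) > \beta_i + \beta_j$. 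Since $\beta_i$ depends $\sym_n$-symmetrically on the configuration, $s$ is continuous and $\sym_n$-equivariant, and by construction $\mathrm{ev} \circ s = \id$.

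It remains to give an $\sym_n$-equivariant homotopy $s \circ \mathrm{ev} \simeq \id$, which I would write down in two stages, both keeping all centres $y_i$ fixed and moving only the half-side vectors $b_i$. In the first stage, replace $b_i$ by the coordinatewise minimum $\min\big(b_i,\ ((1-t) + t\beta_i)\,\mathbf{1}\big)$ for $t \in [0,1]$; this is non-increasing in $t$, hence shrinks all boxes and keeps them disjoint and inside $(0,1)^d$, and at $t = 1$ it lands in the subspace $Q := \{(b_i)_i :\ b_i^\ell \le \beta_i \text{ for all } i, \ell\}$. With all centres fixed, every configuration in $Q$ is automatically a valid disjoint little-cube configuration, again by the inequality $\beta_i + \beta_j < \|y_i - y_j\|_\infty$ used above; thus $Q$ is literally a product of coordinate boxes and, in particular, convex. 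In the second stage, straight-line interpolate the half-side vectors from their stage-one values to $\beta_i\,\mathbf{1}$, staying inside $Q$; the endpoint is exactly $s(\mathrm{ev}(c_1, \dots, c_n))$. Concatenating the two stages yields the desired equivariant homotopy, so $\mathrm{ev}$ is an $\sym_n$-homotopy equivalence.

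The one genuine subtlety — and the reason for splitting the homotopy in two — is that the sizes prescribed by $s$ may be \emph{larger} than the given ones, so a naive straight-line homotopy of sizes could break disjointness; the remedy is to first shrink everything into the region $Q$, where all constraints become vacuous and the problem becomes convex. One must also be careful that $s$ and the homotopy are simultaneously continuous, $\sym_n$-equivariant and valued in genuine little-cube configurations, which is exactly why $\beta_i$ is chosen symmetric in the configuration and bounded both by the pairwise distances and by the distance to the boundary. Alternatively, one could avoid constructing $s$ and the homotopy by hand: both $\littleCubeOperad{d}{n}$ and $\conf(\R^d, n)$ are open submanifolds of Euclidean spaces carrying free smooth $\sym_n$-actions, hence have the $\sym_n$-homotopy type of free $\sym_n$-CW complexes, so by the equivariant Whitehead theorem it suffices to check that $\mathrm{ev}$ is an ordinary weak homotopy equivalence — which follows from its being a quasifibration whose fibre over a configuration $(y_i)$, the space of disjoint little cubes with those centres, is contractible by uniform shrinking of the boxes toward their centres. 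Either way this reproves \cite[Thm.\,4.8]{May72}.
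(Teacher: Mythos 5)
Your construction is correct, and it is worth noting that the paper itself offers no proof of this lemma: it is quoted as a known fact with a citation to May's \cite[Thm.\,4.8]{May72}. What you have written is a sound, self-contained reconstruction of essentially that classical argument: the centre map $\mathrm{ev}$ is $\sym_n$-equivariant, the section $s$ given by the symmetric radius $\beta_i$ (bounded by a quarter of both the pairwise $\ell_\infty$-distances and the distance to $\partial(0,1)^d$) is continuous, equivariant and splits $\mathrm{ev}$, and your two-stage homotopy (first shrink coordinatewise into the region $Q$ where all disjointness constraints are automatic, then interpolate linearly inside the convex set $Q$) correctly handles the one real pitfall, namely that $s$ may prescribe larger cubes than the given ones. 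I checked the key inequalities: $\min(b_i^\ell,1)=b_i^\ell$ at $t=0$ since half-sides are at most $1/2$, the radii stay positive throughout, shrinking about fixed centres preserves disjointness and containment, and in $Q$ disjointness follows from $\|y_i-y_j\|_\infty\ge 4\max(\beta_i,\beta_j)>\beta_i+\beta_j$; so $\mathrm{ev}$ is an equivariant homotopy equivalence (indeed $s$ exhibits the configuration space as an equivariant deformation retract). The only part I would not lean on is your closing alternative: the assertion that $\mathrm{ev}$ is a quasifibration is not justified as stated and would itself require an argument (e.g.\ a Dold--Thom-type criterion), and the equivariant Whitehead reduction needs the free-$\sym_n$-CW structure you invoke; since your direct construction already does everything, that paragraph is dispensable.
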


\medskip
The special case  of the structural map when $m_1 = \dots = m_k$ is useful for us, as explained in the following lemma. Consult also \cite[Lem.\,7.2]{BCCLZ21}.

\medskip
\begin{lemma} \label{lem: mu is equivariant}
	Let $d \ge 1$ and $n \ge 1$ be integers. The structural map
\[
	\mu\colon \littleCubeOperad{d}{m}^{\times k} \times \littleCubeOperad{d}{k} \longrightarrow \littleCubeOperad{d}{mk}
\]
is $((\sym_m)^{\times k } \rtimes \sym_k)$-equivariant. The action of $(\sym_m)^{\times k } \rtimes \sym_k$ on the domain is assumed to be the wreath product action from Definition \ref{def: wreath product action}, and the action on the codomain is given by restriction $(\sym_m)^{\times k } \rtimes \sym_k = \wreathOfSym{2}{m,k} \subseteq \sym_{mk}$.
\end{lemma}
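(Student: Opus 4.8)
The plan is to verify the identity $\mu(g \cdot x) = g \cdot \mu(x)$ directly from the explicit formulas: the assertion is nothing more than the compatibility of the operad composition with the relevant permutations, which is an instance of the equivariance axioms for a symmetric operad in the sense of May \cite{May72}. I will nevertheless spell out the bookkeeping, since the only delicate point is the placement of the inverses.

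First I would fix notation. Write a point of the domain as $x = \big((C^{(1)}, \dots , C^{(k)}); c\big)$, where $c = (c_1, \dots , c_k) \in \littleCubeOperad{d}{k}$ and $C^{(i)} = (C^{(i)}_1, \dots , C^{(i)}_m) \in \littleCubeOperad{d}{m}$ for $1 \le i \le k$, and recall that $\sym_n$ acts on $\littleCubeOperad{d}{n}$ by $\pi \cdot (e_1, \dots , e_n) = (e_{\pi^{-1}(1)}, \dots , e_{\pi^{-1}(n)})$. Identify the $mk$ output slots of $\mu$ with $[k] \times [m]$ via $(i,j) \leftrightarrow (i-1)m + j$, so that the slot $(i,j)$ of $\mu(x)$ carries the composite little cube $c_i \circ C^{(i)}_j$; thus the output decomposes into $k$ blocks of size $m$, the $i$-th block coming from $c_i$ together with $C^{(i)}$. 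Under this identification the inclusion $\wreathOfSym{2}{m,k} = (\sym_m)^{\times k} \rtimes \sym_k \subseteq \sym_{mk}$ recalled before the statement is the imprimitive embedding $(\tau_1, \dots , \tau_k ; \sigma) \cdot (i,j) = \big(\sigma(i), \tau_{\sigma(i)}(j)\big)$; a short check confirms that this is a group homomorphism (equivalently, it is the action of $\wreathOfSym{2}{m,k} \cong \aut(P_2(m,k))$ on the $mk$ level-$2$ elements of $P_2(m,k)$), and that the inverse of $g = (\tau_1, \dots , \tau_k ; \sigma)$ sends $(l,s)$ to $\big(\sigma^{-1}(l), \tau_l^{-1}(s)\big)$.

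Then I would run the computation for such a $g$ and $x$. On one side, Definition \ref{def: wreath product action} gives $g \cdot x = \big((\tau_1 \cdot C^{(\sigma^{-1}(1))}, \dots , \tau_k \cdot C^{(\sigma^{-1}(k))}); \sigma \cdot c\big)$, where the $j$-th coordinate of $\tau_l \cdot C^{(\sigma^{-1}(l))}$ is $C^{(\sigma^{-1}(l))}_{\tau_l^{-1}(j)}$ and the $l$-th coordinate of $\sigma \cdot c$ is $c_{\sigma^{-1}(l)}$; applying $\mu$, the slot $(l,s)$ of $\mu(g \cdot x)$ therefore carries $c_{\sigma^{-1}(l)} \circ C^{(\sigma^{-1}(l))}_{\tau_l^{-1}(s)}$. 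On the other side, $g \cdot \mu(x)$ carries in slot $(l,s)$ whatever $\mu(x)$ carries in slot $g^{-1} \cdot (l,s) = \big(\sigma^{-1}(l), \tau_l^{-1}(s)\big)$, which is again $c_{\sigma^{-1}(l)} \circ C^{(\sigma^{-1}(l))}_{\tau_l^{-1}(s)}$. Since the two agree in every slot $(l,s) \in [k] \times [m]$, we get $\mu(g \cdot x) = g \cdot \mu(x)$, as desired.

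I do not expect any genuine obstacle here. The only things requiring care are the consistent bookkeeping of inverses --- in the permutation action on tuples, in the wreath-product formula of Definition \ref{def: wreath product action}, and in the imprimitive embedding $\wreathOfSym{2}{m,k} \subseteq \sym_{mk}$ --- together with the routine observation that this imprimitive embedding is precisely the inclusion intended in the statement, so that the block decomposition of the output of $\mu$ is compatible with the wreath structure on the domain.
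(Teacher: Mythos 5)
Your verification is correct, and the bookkeeping (placement of the inverses in the tuple action, in the wreath-product formula, and in the imprimitive embedding) is exactly right: both sides place $c_{\sigma^{-1}(l)} \circ C^{(\sigma^{-1}(l))}_{\tau_l^{-1}(s)}$ in slot $(l,s)$. Note, however, that the paper does not prove this lemma at all: it is stated with a pointer to the literature (May's equivariance axioms for the symmetric operad structure, and \cite[Lem.\,7.2]{BCCLZ21}), so your contribution is to spell out explicitly the computation that those references contain in general form --- there is no genuinely different method at play, since the slot-by-slot check of how the composite cubes are reindexed is the standard (and essentially only) argument. Your identification of the inclusion $\wreathOfSym{2}{m,k}\subseteq\sym_{mk}$ with the block (imprimitive) embedding $(i,j)\mapsto(\sigma(i),\tau_{\sigma(i)}(j))$, and its consistency with the paper's description via automorphisms of the poset acting on the bottom-level elements, is the right reading of the intended restriction action, and checking that this embedding is a homomorphism for the paper's semidirect-product convention is a worthwhile detail that the paper leaves implicit.
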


\begin{figure}[h]
\center
\begin{tikzpicture}[scale = 0.8]
		\draw[dashed] (-2.5,0) -- (2.5,0);
		\draw[dashed] (2.5,0) -- (2.5,5);
		\draw[dashed] (2.5,5) -- (-2.5,5);
		\draw[dashed] (-2.5,5) -- (-2.5,0);
		
		\draw (-2.5,0.5) ++ (1,0) -- ++(1,0) -- ++(0,1) -- ++(-1,0) -- ++(0,-1);
		\node at (-0.75,1.25) {$x_2$};
		\node at (-1,1) {$\cdot$};
		\node at (-2,1.5) {$C_2$};
		
		\draw (-0.5,1.5) ++ (1,0) -- ++(1,0) -- ++(0,1) -- ++(-1,0) -- ++(0,-1);
		\node at (1.25,2.25) {$x_3$};
		\node at (1,2) {$\cdot$};
		\node at (2,2.5) {$C_3$};
		
		\draw (-1.5,3.5) ++ (1,0) -- ++(1,0) -- ++(0,1) -- ++(-1,0) -- ++(0,-1);
		\node at (0.25,4.25) {$x_1$};
		\node at (0,4) {$\cdot$};
		\node at (1,4.5) {$C_1$};
		
		\node at (3.5,2.5) {$\longmapsto$};
		
		\node at (5.25,1.25) {$x_2$};
		\node at (5,1) {$\cdot$};
	
		\node at (7.25,2.25) {$x_3$};
		\node at (7,2) {$\cdot$};
		
		\node at (6.25,4.25) {$x_1$};
		\node at (6,4) {$\cdot$};
\end{tikzpicture}
\caption{\small Homotopy equivalence $\littleCubeOperad{2}{3} \to \conf(\R^2,3)$ mapping $(C_1, C_2, C_3)$ to the centers $(x_1, x_2, x_3)$.}
\label{fig 06}
\end{figure}

\medskip
Now, we prove the following auxiliar result.

\medskip
\begin{lemma} \label{lem: wreathConf to conf}
	Let $k \ge 1$, $d \ge 1$ and $n_1, \dots , n_k \ge 2$ be integers. 
	There exists an $\wreathOfSym{k}{n_1, \dots , n_k}$-equivariant map
	\[
		\wreathOfConf{k}{d}{n_1, \dots , n_k} \longrightarrow \conf(\R^d, n),
	\]
	where $n := n_1 \cdots  n_k$ and the action on the codomain is the restriction action $\wreathOfSym{k}{n_1, \dots , n_k} \subseteq \sym_{n}$.
\end{lemma}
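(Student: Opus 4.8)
The plan is to realize the desired map through the little cubes operad, using its structural map to ``glue together'' configurations. Since by Lemma \ref{lem: littleCube and F are homotopy equivalent} the space $\littleCubeOperad{d}{n}$ is $\sym_n$-homotopy equivalent to $\conf(\R^d,n)$, and a homotopy equivalence of $\sym_n$-spaces provides $\sym_n$-equivariant maps in both directions, it suffices to construct an $\wreathOfSym{k}{n_1,\dots,n_k}$-equivariant map
\[
	g_k\colon\ \wreathOfConf{k}{d}{n_1,\dots,n_k}\ \longrightarrow\ \littleCubeOperad{d}{n},
\]
where $\littleCubeOperad{d}{n}$ carries the action restricted along $\wreathOfSym{k}{\bf n}\subseteq\sym_n$; post-composing $g_k$ with the $\sym_n$-equivariant map $\littleCubeOperad{d}{n}\to\conf(\R^d,n)$ then finishes the argument.

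I would build $g_k$ by induction on $k$, in parallel with the inductive definitions of $\wreathOfConf{k}{d}{\bf n}$ and $\wreathOfSym{k}{\bf n}$. For $k=1$ we have $\wreathOfConf{1}{d}{n_1}=\conf(\R^d,n_1)$ and $n=n_1$, and Lemma \ref{lem: littleCube and F are homotopy equivalent} supplies a $\sym_{n_1}$-equivariant map $\conf(\R^d,n_1)\to\littleCubeOperad{d}{n_1}$, taken as $g_1$. For $k\ge 2$ set $m:=n_1\cdots n_{k-1}$, so $n=mn_k$. By the induction hypothesis there is a $\wreathOfSym{k-1}{{\bf n}'}$-equivariant map $g_{k-1}\colon\wreathOfConf{k-1}{d}{{\bf n}'}\to\littleCubeOperad{d}{m}$, and the $k=1$ instance of Lemma \ref{lem: littleCube and F are homotopy equivalent} gives a $\sym_{n_k}$-equivariant map $g_1'\colon\conf(\R^d,n_k)\to\littleCubeOperad{d}{n_k}$. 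I then define $g_k$ as the composite
\[
	\wreathOfConf{k-1}{d}{{\bf n}'}^{\times n_k}\times\conf(\R^d,n_k)\ \xrightarrow{\ (g_{k-1})^{\times n_k}\times g_1'\ }\ \littleCubeOperad{d}{m}^{\times n_k}\times\littleCubeOperad{d}{n_k}\ \xrightarrow{\ \mu\ }\ \littleCubeOperad{d}{n},
\]
where $\mu$ is the structural map of the little cubes operad.

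For equivariance, the map $(g_{k-1})^{\times n_k}\times g_1'$ intertwines the wreath product actions of $\wreathOfSym{k}{\bf n}=\wreathOfSym{k-1}{{\bf n}'}^{\times n_k}\rtimes\sym_{n_k}$ on source and target; here one uses that the \emph{same} map $g_{k-1}$ occupies all $n_k$ slots, so that the factor-permuting $\sym_{n_k}$ is respected. By Lemma \ref{lem: mu is equivariant}, $\mu$ is $\big((\sym_m)^{\times n_k}\rtimes\sym_{n_k}\big)$-equivariant with $\littleCubeOperad{d}{n}$ carrying the action restricted along $\wreathOfSym{2}{m,n_k}\subseteq\sym_n$; restricting this along the inclusion $\wreathOfSym{k-1}{{\bf n}'}^{\times n_k}\rtimes\sym_{n_k}\subseteq(\sym_m)^{\times n_k}\rtimes\sym_{n_k}$ induced by the intrinsic embedding $\wreathOfSym{k-1}{{\bf n}'}\subseteq\sym_m$ in each coordinate shows that $g_k$ is $\wreathOfSym{k}{\bf n}$-equivariant, where $\littleCubeOperad{d}{n}$ carries the action restricted along the composite $\wreathOfSym{k}{\bf n}\hookrightarrow\wreathOfSym{2}{m,n_k}\hookrightarrow\sym_n$.

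The one step requiring genuine care — and the one I expect to be the main obstacle — is to check that this composite inclusion $\wreathOfSym{k}{\bf n}\hookrightarrow\sym_n$ coincides with the inclusion prescribed before the statement, namely the one coming from $\wreathOfSym{k}{\bf n}\cong\aut_k({\bf n})$ acting on the $n$ level-$k$ elements of the poset $P_k({\bf n})$ (Lemma \ref{lem: aut = wreathSym}). This is verified by unwinding the poset structure: the $n$ level-$k$ leaves of $P_k({\bf n})$ split into $n_k$ blocks of size $m$ according to which of the $n_k$ maximal proper subtrees of $P_k({\bf n})$ — each isomorphic to $P_{k-1}({\bf n}')$ — contains them, and under the resulting identification $[n]\cong[m]\times[n_k]$ the intrinsic $\wreathOfSym{k}{\bf n}$-action is exactly ``permute the $n_k$ blocks by $\sym_{n_k}$ and act within the $i$-th block by $\Sigma_i$ through the intrinsic $\wreathOfSym{k-1}{{\bf n}'}\subseteq\sym_m$'', which is precisely the composite inclusion above. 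Granting this identification, $g_k$ is the sought map, and post-composing it with the $\sym_n$-equivariant map $\littleCubeOperad{d}{n}\to\conf(\R^d,n)$ of Lemma \ref{lem: littleCube and F are homotopy equivalent} completes the induction and the proof.
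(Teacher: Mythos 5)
Your proposal is correct and takes essentially the same route as the paper's proof: an induction on $k$ whose inductive step feeds the product $(g_{k-1})^{\times n_k}\times g_1'$ into the structural map $\mu$ of the little cubes operad, using Lemma \ref{lem: mu is equivariant} for the equivariance of $\mu$ and the $\sym_n$-homotopy equivalence of Lemma \ref{lem: littleCube and F are homotopy equivalent} (and its equivariant homotopy inverse) to move between $\littleCubeOperad{d}{m}$ and $\conf(\R^d,m)$, exactly as in the paper's construction of $\gamma_k$. The only deviation is bookkeeping: you keep the codomain in the cubes operad throughout the induction and pass to $\conf(\R^d,n)$ once at the end, while the paper returns to configuration spaces at each inductive step; your explicit check that the inductively arising inclusion $\wreathOfSym{k}{n_1,\dots,n_k}\subseteq\sym_n$ agrees with the poset-automorphism one is a point the paper leaves implicit, and it is verified correctly.
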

\begin{proof}
	We will show the existence of an $\wreathOfSym{k}{n_1, \dots , n_k}$-equivariant map
	\[
		\gamma_{k}\colon\wreathOfConf{k}{d}{n_1, \dots , n_k} \longrightarrow  \conf(\R^d, n),
	\]
	by induction on $k \ge 1$. 
	In the base case $k=1$, the map $\gamma_1$ is the equality.
	
	\medskip
	Assume $k \ge 2$. 
	As before, let $n' := n_1 \cdots n_{k-1}$ and ${\bf n}' := (n_1, \dots ,n_{k-1})$. 
	The map $\gamma_k$ is defined by the following diagram
	\begin{equation*}
		\begin{tikzcd}
			C_{k-1}(d; {\bf n}')^{\times n_k} \times F(\R^d, n_k) \arrow[rr, "(\gamma_{k-1})^{\times  n_k} \times \id"] & & F(\R^d, n')^{\times n_k} \times F (\R^d, n_k) & F(\R^d, n)\\
			 & &  \littleCubeOperad{d}{n'}^{\times n_k} \times \littleCubeOperad{d}{n_k} \arrow[u, "\simeq"] \arrow[r, "\mu"] & \littleCubeOperad{d}{n} \arrow[u, "\simeq"].
		\end{tikzcd}
	\end{equation*}
	The group $\wreathOfSym{k}{\bf n}$ acts on $F(\R^d, n')^{\times n_k} \times F (\R^d, n_k)$ by the wreath product action (see Definition \ref{def: wreath product action}), so the top horizontal map is $\wreathOfSym{k}{\bf n}$-equivariant by the induction hypothesis. 
	Both vertical maps are $\wreathOfSym{k}{\bf n}$-equivariant by Lemma \ref{lem: littleCube and F are homotopy equivalent}. 
	Indeed, the left one is the product of equivariant homotopy equivalences, hence its homotopy inverse is equivariant as well. Finally, the bottom horizontal map is $\wreathOfSym{k}{\bf n}$-equivariant by Lemma \ref{lem: mu is equivariant}.
\end{proof}

\medskip
Next, we prove an additional auxiliar result.

\medskip
\begin{lemma} \label{lem: Wn to wreathW}
	Let $k \ge 1$, $d \ge 1$ and $n_1, \dots , n_k \ge 2$ be integers. Then, there exists an $\wreathOfSym{k}{n_1, \dots , n_k}$-module isomorphism
	\begin{equation*}
			W_n^{\oplus d-1} \xrightarrow{~ \cong ~} \wreathOfW{k}{d-1}{n_1, \dots , n_k},
	\end{equation*}
	where $n := n_1 \cdots  n_k$ and the action on the domain is the restriction action $\wreathOfSym{k}{n_1, \dots , n_k} \subseteq \sym_{n}$.
\end{lemma}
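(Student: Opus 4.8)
The plan is to prove this by induction on $k$, establishing in fact that $W_n^{\oplus d-1}$ and $\wreathOfW{k}{d}{n_1, \dots ,n_k}$ are isomorphic as $\wreathOfSym{k}{n_1, \dots ,n_k}$-modules. I note first that both modules have dimension $(d-1)(n_1\cdots n_k-1)$ (for the codomain this was computed right after its definition), so any $\wreathOfSym{k}{\bf n}$-module monomorphism between them is automatically an isomorphism; hence it suffices to produce a monomorphism. The base case $k=1$ is trivial, since $\wreathOfW{1}{d}{n_1} = W_{n_1}^{\oplus d-1}$ and $\wreathOfSym{1}{n_1} = \sym_{n_1}$ acts by permuting coordinates, which is exactly the restriction action on $W_n^{\oplus d-1}$ with $n = n_1$.

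For the inductive step, set $n' := n_1\cdots n_{k-1}$ and ${\bf n}' := (n_1, \dots ,n_{k-1})$, so $n = n'n_k$. The inclusion $\wreathOfSym{k}{\bf n} = \wreathOfSym{k-1}{{\bf n}'}^{\times n_k}\rtimes\sym_{n_k}\subseteq\sym_n$ is the one arising from the identification $[n]\cong[n_k]\times[n']$ in which $\sym_{n_k}$ permutes the $n_k$ blocks of size $n'$ and the $i$-th copy of $\wreathOfSym{k-1}{{\bf n}'}$ acts on the $i$-th block through $\wreathOfSym{k-1}{{\bf n}'}\subseteq\sym_{n'}$; this is exactly the block structure underlying the operadic map $\mu$ used in Lemma~\ref{lem: wreathConf to conf} and Lemma~\ref{lem: mu is equivariant}. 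Accordingly $\R^n\cong(\R^{n'})^{\oplus n_k}$ carries the wreath product action, and decomposing each summand $\sym_{n'}$-equivariantly as $\R^{n'}\cong W_{n'}\oplus\R$ with $\R$ trivial, and then the resulting trivial part $\R^{n_k}\cong W_{n_k}\oplus\R$ with $\R$ trivial, one checks that the sum-zero subspace $W_n\subseteq\R^n$ corresponds precisely to $W_{n'}^{\oplus n_k}\oplus W_{n_k}$ (the residual trivial summand being spanned by $(1,\dots,1)\in\R^n$). The point to verify here is that the induced $\wreathOfSym{k}{\bf n}$-action is the wreath product action: $\sym_{n_k}$ permutes the $n_k$ copies of $W_{n'}$ and acts on $W_{n_k}$ by coordinate permutation, while $\wreathOfSym{k-1}{{\bf n}'}^{\times n_k}$ acts on $W_{n'}^{\oplus n_k}$ factorwise and trivially on $W_{n_k}$; this holds because the block average $\tfrac1{n'}\sum_j x^{(i)}_j$ is fixed by the $i$-th copy of $\wreathOfSym{k-1}{{\bf n}'}$ and is permuted in the standard way by $\sym_{n_k}$. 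Taking the $(d-1)$-fold direct sum yields
\[
	W_n^{\oplus d-1} ~\cong~ \big(W_{n'}^{\oplus d-1}\big)^{\oplus n_k}\oplus W_{n_k}^{\oplus d-1}
\]
as $\wreathOfSym{k}{\bf n}$-modules.

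Finally, I would invoke the induction hypothesis, which gives an $\wreathOfSym{k-1}{{\bf n}'}$-module monomorphism $\iota_{k-1}\colon W_{n'}^{\oplus d-1}\rightarrowtail\wreathOfW{k-1}{d}{{\bf n}'}$. Since the same map $\iota_{k-1}$ is used in each of the $n_k$ factors, the map $\iota_{k-1}^{\oplus n_k}$ is equivariant not only for $\wreathOfSym{k-1}{{\bf n}'}^{\times n_k}$ but also for the permutation of summands by $\sym_{n_k}$, hence is $\wreathOfSym{k}{\bf n}$-equivariant. Composing the displayed isomorphism with $\iota_{k-1}^{\oplus n_k}\oplus\id_{W_{n_k}^{\oplus d-1}}$ then produces an $\wreathOfSym{k}{\bf n}$-module monomorphism
\[
	W_n^{\oplus d-1}~\rightarrowtail~\wreathOfW{k-1}{d}{{\bf n}'}^{\oplus n_k}\oplus W_{n_k}^{\oplus d-1}~=~\wreathOfW{k}{d}{n_1, \dots ,n_k},
\]
completing the induction. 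The only delicate part of the argument is the bookkeeping in the previous paragraph — matching the semidirect-product action on $W_n$ with the wreath product action on $W_{n'}^{\oplus n_k}\oplus W_{n_k}$ — while the rest is formal.
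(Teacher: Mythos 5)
Your proof is correct and takes essentially the same route as the paper: induction on $k$, with the inductive step given by precisely the block-average identification $W_n \cong W_{n'}^{\oplus n_k}\oplus W_{n_k}$ (the paper's map \eqref{eq: inductive step map for W}, whose $\overline{w}$-term vanishes on $W_n$) composed with the inductively constructed map in each of the $n_k$ factors, the equivariance bookkeeping being the same. The only cosmetic differences are that you carry the $(d-1)$-fold direct sum through the induction instead of first reducing to a single copy of each $W_{n_i}$, and that you note via a dimension count that the monomorphism is in fact an isomorphism, which the lemma does not require.
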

\begin{proof}
	Since we have $\wreathOfW{k}{d-1}{\bf n} \cong_{\wreathOfSym{k}{\bf n}} \wreathOfW{k}{1}{\bf n}^{\oplus d-1}$, it is enough to show there exists an $\wreathOfSym{k}{\bf n}$-equivariant monomorphism
	\begin{equation*}
			\mu_k: W_n ~\xrightarrow{~ \cong ~} ~\wreathOfW{k}{1}{\bf n}.
	\end{equation*}
	We will prove this by induction on $k \ge 1$. 
	In the base case $k = 1$ we set $\mu_1$ to be the identity. 
	
	\medskip
	Assume $k \ge 2$. 
	Let ${\bf n}' := (n_1, \dots , n_{k-1})$ and $n' = n_1 \cdots  n_{k-1}$. It is enough to show there exists an $\wreathOfSym{k}{\bf n}$-equivariant isomorphism of the form
	\begin{equation} \label{eq: inductive step map for W}
			W_n ~\xrightarrow{~ \cong ~}~ (W_{n'})^{\oplus n_k} \oplus W_{n_k},
	\end{equation}
	where $\wreathOfSym{k}{\bf n} = \wreathOfSym{k-1}{{\bf n}'}^{n_k} \rtimes \sym_{n_k}$ acts on the codomain by the wreath product action from Definition \ref{def: wreath product action} and $\wreathOfSym{k-1}{{\bf n}'}$ acts on $W_{n'}$ by the restriction $\wreathOfSym{k-1}{{\bf n}'} \subseteq \sym_{n'}$. 
	Indeed, then by the induction hypothesis the desired map can be set to be the $\wreathOfSym{k}{\bf n}$-equivariant composition
	\begin{equation*}
			W_n~ \xrightarrow{~\eqref{eq: inductive step map for W}} ~ W_{n'}^{\oplus n_k} \oplus W_{n_k}~  \xrightarrow{\mu_{k-1}^{\oplus n_k} \oplus \id} ~ \wreathOfW{k-1}{1}{{\bf n}'}^{\oplus n_k} \oplus W_{n_k} ~=~ \wreathOfW{k}{1}{{\bf n}}.
	\end{equation*}
	The map \eqref{eq: inductive step map for W} can be constructed as follows. 
	Let 
	\[
		w = (w_1, \dots w_{n_k}) \in W_n \subseteq (\R^{n'})^{n_k},
	\]
	where $w_1 + \dots + w_{n_k} = 0$. 
	Furthermore, let us denote
	\begin{compactitem}[\quad --]
		\item for each $1 \le i \le n$ by $\overline{w}_i \in \R$ the average of the $n'$ coordinates of the vector $w_i \in \R^{n'}$, and
		
		\item by $\overline{w} \in \R^{n_k}$ the average of the $n_k$ coordinates of the vector $(\overline{w}_1, \dots , \overline{w}_{n_k}) \in \R^{n_k}$. 
	\end{compactitem}
	With this notation, we have that
	\[
		w_i - \overline{w}_i \cdot (1, \dots , 1) \in W_{n'}
	\]
	for each $1 \le i \le n_k$, as well as 
	\[
		(\overline{w}_1, \dots , \overline{w}_{n_k}) - \overline{w}\cdot(1, \dots ,1) \in W_{n_k}.
	\] 
	Finally, we set the map \eqref{eq: inductive step map for W} to be such that it sends $w \in W_n$ to the vector
	\[
		\Big( w_i - \overline{w}_i\cdot(1, \dots , 1)\Big)_{i=1}^{n_k} \oplus \Big((\overline{w}_1, \dots , \overline{w}_{n_k}) - \overline{w}\cdot(1, \dots ,1) \Big) \in (W_{n'})^{\oplus n_k} \oplus W_{n_k}.
	\]
	This map is injective, and hence an isomorphism due to dimension reasons. Moreover, the restriction action $\wreathOfSym{k}{\bf n} \subseteq \sym_n$ on $W_n$ and the wreath product action of $\wreathOfSym{k-1}{{\bf n}'}^{n_k} \rtimes \sym_{n_k}$ on $(W_{n'})^{\oplus n_k} \oplus W_{n_k}$ make it $\wreathOfSym{k}{\bf n}$-equivariant.
\end{proof}

\medskip
Finally, we give the second proof of the existence part of Theorem \ref{theorem: main}.

\medskip
\begin{corollary}[Existence in Thm.\,\ref{theorem: main}] \label{cor: existence via operad}
	Let $k \ge 1$, $d \ge 1$ and $n_1, \dots , n_k \ge 2$ be integers. 
	Assume that $n_1, \dots , n_k$ are not all powers of the same prime number. Then, there exists an $\wreathOfSym{k}{n_1, \dots , n_k}$-equivariant map 
	\[
		\wreathOfConf{k}{d}{n_1, \dots , n_k} ~ \longrightarrow  ~ S(\wreathOfW{k}{d-1}{n_1, \dots , n_k}).
	\]
\end{corollary}
\begin{proof}
	If $n_1, \dots , n_k$ are not all powers of the same prime number, in particular $n := n_1 \cdots n_k$ is not a power of a prime number, hence by \cite[Thm.\,1.2]{BlagojevicZiegler15} there exists an $\sym_n$-equivariant map 
	\[
		\conf(\R^d, n) ~ \longrightarrow ~ W_n^{\oplus d-1} \setminus \{0\}.
	\]
	Let us set ${\bf n} := (n_1, \dots , n_k)$. Precomposing this map with the map from Lemma \ref{lem: wreathConf to conf} and postcomposing with the isomorphism from Lemma \ref{lem: Wn to wreathW} we get an $\wreathOfSym{k}{\bf n}$-equivariant composition
	\begin{equation*}
			C_k(d; {\bf n})~ \longrightarrow ~\conf(\R^d, n)~ \longrightarrow ~ W_n^{\oplus d-1} \setminus \{0\} ~ \longrightarrow ~W_k(d; {\bf n}) \setminus \{0\}
	\end{equation*}
	where we consider $\wreathOfSym{k}{\bf n} \subseteq \sym_n$.
\end{proof}

\begin{remark} \label{remark: mapping through X to the sphere}
	The idea of the proof of Corollary \ref{cor: existence via operad} in the case when $n = n_1\cdots  n_k$ is not a prime power is based on equivariantly mapping the space $C_k(d; {\bf n})$ to some other space (in this case the configuration space $\conf(\R^d, n)$), from which we already know there is an equivariant map to the sphere in question.\\
	Another way to exploit this ideas was communicated to us by an anonymous referee.\\ 
	Namely, for $d \ge 3$, by the result of Avvakumov, Karasev \& Skopenkov \cite[Thm.\,2.2]{AvvakumovKarasevSkopenkov23} it follows that there is an $\sym_n$-equivariant map
	\begin{equation} \label{eq: map from X to S}
		X ~ \longrightarrow ~ S(W_n^{\oplus d-1})
	\end{equation}
	for any finite and free cellular $\sym_n$-complex $X$. Choosing, $X$ to be at least $((n-1)(d-1)-1)$-connected, there is no obstruction to the existence of an $\wreathOfSym{k}{\bf n}$-equivariant map
	\[
		\wreathOfConf{k}{d}{\bf n} ~ \longrightarrow ~ X.
	\]
	In the case when $d=2$, and $n$ is not a prime power and not twice the prime power, by using the result of Avvakumov \& Kudrya \cite[Thm.\,1.1]{AvvakumovKudrya21}, one deduces the existence of a map \eqref{eq: map from X to S}, and concludes the proof in analogous fashion.\\
	In a similar way, one may use the result of \"Ozaydin \cite[Thm.\,4.2]{Ozaydin87}. In the next section, we expand \"Ozaydin's proof in detail.
\end{remark}

\medskip
\section{Existence in Theorem \ref{theorem: main}: the \"Ozaydin trick} \label{section: existence via ozaydin}
\medskip

In this section we give yet another short proof of the existence of an $\wreathOfSym{k}{n_1, \dots , n_k}$-equivariant map 
\begin{equation} \label{eq: cellWreath to sphere, ozaydin}
	\cellWreathOfConf{k}{d}{n_1, \dots , n_k} ~ \longrightarrow ~ S(\wreathOfW{k}{d-1}{n_1, \dots , n_k})
\end{equation}
when $n_1, \dots , n_k$ are not all powers of the same prime number. 
We rely on the equivariant obstruction theory, as presented by tom Dieck \cite[Sec.\,II.3]{tomDieck:TransformationGroups}, and appropriate a trick developed by \"Ozaydin in \cite{Ozaydin87}.\\

Let ${\bf n} := (n_1, \dots ,n_k)$ and $M_k := (d-1)(n_1 \cdots n_k -1)$ as before.
Not that
\begin{compactitem}[\quad --]
	\item $\cellWreathOfConf{k}{d}{\bf n}$ is a free $\wreathOfSym{k}{\bf n}$-cell complex of dimension $M_k$, and
	
	\item the $\wreathOfSym{k}{\bf n}$-sphere $S(\wreathOfW{k}{d-1}{\bf n})$ is $(M_k-1)$-simple and $(M_k-2)$-connected.
\end{compactitem}
Consequently,  an $\wreathOfSym{k}{\bf n}$-equivariant map \eqref{eq: cellWreath to sphere, ozaydin} exists if and only if the primary obstruction
\begin{equation*}
	\mathfrak{o}_{\wreathOfSym{k}{\bf n}} \in H^{M_k}_{\wreathOfSym{k}{\bf n}}(\cellWreathOfConf{k}{d}{\bf n}; \ZZ_k)
\end{equation*}
vanishes.
Here, as before, $\ZZ_k := \ZZ_k(d-1, {\bf n}) := \pi_{M_k-1}(S(\wreathOfW{k}{d-1}{\bf n}))$ denotes the coefficient module.

\medskip
In same way, for any subgroup $G \subseteq \wreathOfSym{k}{\bf n}$, the existence of a $G$-equivariant map
\begin{equation} \label{eq: cellWreath to sphere, ozaydin}
	\cellWreathOfConf{k}{d}{n_1, \dots , n_k} ~ \longrightarrow ~ S(\wreathOfW{k}{d-1}{n_1, \dots , n_k})
\end{equation}
is equivalent to the vanishing of the obstruction class
\begin{equation*}
	\mathfrak{o}_{G} \in H^{M_k}_{G}(\cellWreathOfConf{k}{d}{\bf n}; \ZZ_k),
\end{equation*}
which in particular is the restriction of the class $\mathfrak{o}_{\wreathOfSym{k}{\bf n}}$.

\medskip
For each integer $n \ge 2$ and a prime number $p$, let us fix a $p$-Sylow subgroup $\sym_n^{(p)} \subseteq \sym_n$. For more details on Sylow subgroups, see for example \cite[Sec.\,4.5]{DummitFoote04}.

\medskip
\begin{definition}
	Let $k \ge 1$ and $n_1, \dots, n_k \ge 2$ be integers. For a prime number $p$, let us define a subgroup $\wreathOfSym{k}{n_1, \dots, n_k}^{(p)} \subseteq \wreathOfSym{k}{n_1, \dots, n_k}$ inductively as follows.
	\begin{enumerate} [(i)]
		\item For $k = 1$ we set $\wreathOfSym{1}{n} := \sym_n^{(p)}$. 
		
		\item Assume $k \ge 2$. Then, we define
		\[
			\wreathOfSym{k}{n_1, \dots, n_k}^{(p)} := (\wreathOfSym{k-1}{n_1, \dots, n_{k-1}}^{(p)})^{\times n_k} \rtimes \sym_{n_k}^{(p)} \subseteq \wreathOfSym{k-1}{n_1, \dots, n_{k-1}}^{\times n_k} \rtimes \sym_{n_k}.
		\]
	\end{enumerate}
\end{definition}

\medskip
Notice that $\wreathOfSym{k}{\bf n}^{(p)} \subseteq \wreathOfSym{k}{\bf n}$ is indeed a $p$-Sylow subgroup. 
For $k=1$ this follows from definition, while for $k \ge 2$ the recursive formula
\[
	\big|\wreathOfSym{k}{\bf n}^{(p)}\big| = \big|\wreathOfSym{k-1}{{\bf n}'}^{(p)}\big|^{n_k} \cdot \big|\sym_{n_k}^{(p)}\big|
\]
implies that the order $|\wreathOfSym{k}{\bf n}^{(p)}|$ is the maximal power of $p$ which divides the order of $|\wreathOfSym{k}{n_1, \dots, n_k}|$.

\medskip
\begin{lemma} \label{lem: fixed point of wreathSym^(p)}
	Let $d \ge 1$, $k \ge 1$ and $n_1, \dots , n_k \ge 2$ be integers and let $p$ be a prime number. 
	If $n_1, \dots , n_k$ are not all powers of $p$, then the   obstruction class 
	\[
		\mathfrak{o}_{\wreathOfSym{k}{\bf n}^{(p)}}  \in H^{M_k}_{\wreathOfSym{k}{\bf n}^{(p)}}(\cellWreathOfConf{k}{d}{\bf n}; \ZZ_k)
	\]
	vanishes.
\end{lemma}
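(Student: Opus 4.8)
The plan is to avoid any coboundary computation and instead observe that $\wreathOfSym{k}{\bf n}^{(p)}$ has a nonzero fixed vector in the representation $\wreathOfW{k}{d}{\bf n}$ — the lemma's label already advertises this point of view. Granting such a vector $v$, the normalised constant map $\cellWreathOfConf{k}{d}{\bf n} \longrightarrow S(\wreathOfW{k}{d}{\bf n})$, $x \mapsto v/\lVert v\rVert$, is $\wreathOfSym{k}{\bf n}^{(p)}$-equivariant. Since the primary obstruction $\mathfrak{o}_{\wreathOfSym{k}{\bf n}^{(p)}}$ is the complete obstruction to the existence of such a map — the source being an $M_k$-dimensional free $\wreathOfSym{k}{\bf n}^{(p)}$-CW complex and the target $(M_k-1)$-simple and $(M_k-2)$-connected — its mere existence forces $\mathfrak{o}_{\wreathOfSym{k}{\bf n}^{(p)}} = 0$. (Here $d \ge 2$ is used so that $S(\wreathOfW{k}{d}{\bf n})$ is nonempty and the fixed subspace is positive-dimensional; the case $d=1$ is degenerate.)

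First I would reformulate the hypothesis: $n_1, \dots, n_k$ fail to be all powers of $p$ exactly when $n := n_1 \cdots n_k$ is not a power of $p$, because every $n_i$ divides $n$. The one nontrivial ingredient is the standard Sylow fact that, for $m \ge 2$, the Sylow subgroup $\sym_m^{(p)} \subseteq \sym_m$ acts transitively on $[m]$ if and only if $m$ is a power of $p$; consequently, when $m$ is not a power of $p$ its orbits on $[m]$ all have $p$-power size and there are at least two of them, so a proper nonempty $\sym_m^{(p)}$-invariant subset $J \subseteq [m]$ exists, and the vector $\mathbf{1}_J - \tfrac{|J|}{m}\mathbf{1}_{[m]}$ is a nonzero $\sym_m^{(p)}$-fixed element of $W_m$.

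I would then produce the required fixed vector in $\wreathOfW{k}{d}{\bf n}$ by induction on $k$, following the recursive definitions $\wreathOfW{k}{d}{\bf n} = \wreathOfW{k-1}{d}{{\bf n}'}^{\oplus n_k} \oplus W_{n_k}^{\oplus d-1}$ and $\wreathOfSym{k}{\bf n}^{(p)} = (\wreathOfSym{k-1}{{\bf n}'}^{(p)})^{\times n_k} \rtimes \sym_{n_k}^{(p)}$, where ${\bf n}' = (n_1, \dots, n_{k-1})$. The base case $k=1$ is the previous paragraph applied to $m = n_1$, giving a fixed vector in the summand $W_{n_1} \subseteq W_{n_1}^{\oplus d-1}$. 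For $k \ge 2$: if $n_k$ is not a power of $p$, pick a nonzero $\sym_{n_k}^{(p)}$-fixed $w^* \in W_{n_k}$ and verify that $(0, \dots, 0; (w^*, 0, \dots, 0))$ is fixed by the wreath product action; otherwise some $n_i$ with $i < k$ is not a power of $p$, so by induction there is a nonzero $\wreathOfSym{k-1}{{\bf n}'}^{(p)}$-fixed $V^* \in \wreathOfW{k-1}{d}{{\bf n}'}$, and then the ``diagonal'' vector $(V^*, \dots, V^*; 0)$ is fixed by the wreath product action. Alternatively one can skip the induction: Lemma \ref{lem: Wn to wreathW} together with the dimension equality $\dim \wreathOfW{k}{d}{\bf n} = (d-1)(n-1) = \dim W_n^{\oplus d-1}$ upgrades the monomorphism there to an isomorphism $\wreathOfW{k}{d}{\bf n} \cong W_n^{\oplus d-1}$ of $\wreathOfSym{k}{\bf n}$-modules, and a nonzero $\wreathOfSym{k}{\bf n}^{(p)}$-fixed vector then exists because the $p$-group $\wreathOfSym{k}{\bf n}^{(p)}$ acts on the $n$ level-$k$ elements of the poset $P_k({\bf n})$ with orbits of $p$-power size, hence non-transitively since $n$ is not a power of $p$.

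I do not expect a real obstacle here: the entire content is the change of viewpoint — look for a fixed point of the Sylow subgroup rather than run the obstruction-cocycle machinery of Section \ref{section: equivariant obstruction theory} — together with the elementary verification that the exhibited vectors are invariant under the \emph{full} wreath product action and not merely under its diagonal or top-level subgroups, and the mild bookkeeping needed to align the induction on $k$ with the two cases ``$n_k$ is / is not a power of $p$''.
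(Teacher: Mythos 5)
Your proposal is correct and is essentially the paper's own argument: you exhibit a nonzero $\wreathOfSym{k}{\bf n}^{(p)}$-fixed vector in $\wreathOfW{k}{d}{\bf n}$ by the same induction on $k$ (base case via non-transitivity of $\sym_{n_1}^{(p)}$, then the two cases ``$n_k$ not a $p$-power'' giving a vector supported in $W_{n_k}^{\oplus d-1}$ and ``some earlier $n_i$ not a $p$-power'' giving the diagonal vector $(V^*,\dots,V^*;0)$), and conclude that the constant map to the sphere forces the primary obstruction to vanish. The non-inductive aside via Lemma \ref{lem: Wn to wreathW} and a dimension count is a valid small variant, but the core route coincides with the paper's proof.
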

\begin{proof}
	Assume $n_1, \dots , n_k$ are not all powers of a prime $p$.	
	Let us show that the action of the $p$-Sylow subgroup $\wreathOfSym{k}{\bf n}^{(p)} \subseteq \wreathOfSym{k}{\bf n}$ on the sphere $S(\wreathOfW{k}{d-1}{\bf n})$ has a fixed point. 
	This would produce a (constant) $\wreathOfSym{k}{\bf n}^{(p)}$-equivariant map
	\[
		\cellWreathOfConf{k}{d}{\bf n} \longrightarrow  S(\wreathOfW{k}{d-1}{\bf n}).
	\]
	Hence, from the discussion at the beginning of this section, we would have that $\mathfrak{o}_{\wreathOfSym{k}{\bf n}^{(p)}} = 0$.
	
	\medskip
	Since 
	\[
		S(\wreathOfW{k}{d-1}{\bf n}) \subseteq \wreathOfW{k}{d-1}{\bf n} \setminus \{0\}
	\]
	is an $\wreathOfSym{k}{\bf n}$-equivariant deformation retract, it is enough to show that $\wreathOfSym{k}{\bf n}^{(p)} \subseteq \wreathOfSym{k}{\bf n}$ has a nonzero fixed point in the vector space $\wreathOfW{k}{d-1}{\bf n}$. We prove this fact by induction on $k \ge 1$.
	\begin{compactenum} [\quad \rm (i)]
		\item If $k=1$, then $n := n_1$ is not a power of $p$. Thus, the $p$-Sylow subgroup $\sym_n^{(p)} \subseteq \sym_n$ does not act transitively on the set $\{1, \dots , n\}$. Consequently, $\sym_n^{(p)}$ fixes some nonzero vector $w \in W_n$, and hence it fixes $(w, \dots, w) \in W_n^{\oplus d-1}$ as well.
		
		\item Assume $k \ge 2$. We distinguish two cases. \begin{compactenum} [\rm (a)]
			\item The number $n_k$ is not a power of $p$. Then by (i) we know that $\sym_{n_k}^{(p)}$ fixes a nonzero point $v \in W_{n_k}^{\oplus d-1}$, so the nonzero point
			\[
				(0, \dots ,0; v) \in \wreathOfW{k-1}{d-1}{{\bf n}'}^{\oplus n_k} \oplus W_{n_k}^{\oplus d-1} = \wreathOfW{k}{d-1}{\bf n}
			\]
			is fixed by $\wreathOfSym{k}{\bf n}^{(p)}$.
			\item The numbers $n_1, \dots ,n_{k-1}$ are not all powers of $p$. Then, by induction hypothesis, there is a nonzero vector $V \in \wreathOfW{k-1}{d-1}{{\bf n}'}$ fixed by $\wreathOfSym{k-1}{{\bf n}'}^{(p)}$, so the nonzero vector
			\[
				(V, \dots , V; 0) \in (\wreathOfW{k-1}{d-1}{{\bf n}'})^{\oplus n_k} \oplus W_{n_k}^{\oplus d-1} = \wreathOfW{k}{d-1}{\bf n}
			\]
			is fixed by $\wreathOfSym{k}{\bf n}^{(p)}$.
		\end{compactenum} 
	\end{compactenum}
	This completes the proof of the lemma. 
\end{proof}

\medskip
Now, we give the third proof of the existence part of Theorem \ref{theorem: main}.

\medskip
\begin{corollary}[Existence in Theorem \ref{theorem: main}]
	Let $d \ge 1$, $k \ge 1$ and $n_1, \dots , n_k \ge 2$ be integers. 
	Assume that $n_1, \dots ,n_k$ are not all powers of the same prime number. Then, there exists an $\wreathOfSym{k}{n_1, \dots , n_k}$-equivariant map 
	\[
		\wreathOfConf{k}{d}{n_1, \dots , n_k} ~ \longrightarrow ~ S(\wreathOfW{k}{d-1}{n_1, \dots , n_k}).
	\]
\end{corollary}
\begin{proof}
	As already noted in the beginning of the section, the existence of an equivariant map is equivalent to vanishing of the obstruction class
	\[
		\mathfrak{o}_{\wreathOfSym{k}{\bf n}}  \in H^{M_k}_{\wreathOfSym{k}{\bf n}}(\cellWreathOfConf{k}{d}{\bf n}; \ZZ_k).
	\]
	Let $p$ be a fixed prime number dividing the order $|\wreathOfSym{k}{\bf n} |$. Since $n_1, \dots ,n_k$ are not all powers of the same prime number by Lemma \ref{lem: fixed point of wreathSym^(p)} we have
\begin{equation*}
	\mathfrak{o}_{\wreathOfSym{k}{\bf n}^{(p)}} = 0 \in H^{M_k}_{\wreathOfSym{k}{\bf n}^{(p)}}(\cellWreathOfConf{k}{d}{\bf n}; \ZZ_k).
\end{equation*}
Next, the restriction morphism
\[
	\res\colon H^{M_k}_{\wreathOfSym{k}{\bf n}}(\cellWreathOfConf{k}{d}{\bf n}; \ZZ_k) \longrightarrow H^{M_k}_{\wreathOfSym{k}{\bf n}^{(p)}}(\cellWreathOfConf{k}{d}{\bf n}; \ZZ_k),
\]
and the transfer morphism
\[
	\trf\colon  H^{M_k}_{\wreathOfSym{k}{\bf n}^{(p)}}(\cellWreathOfConf{k}{d}{\bf n}; \ZZ_k) \longrightarrow H^{M_k}_{\wreathOfSym{k}{\bf n}}(\cellWreathOfConf{k}{d}{\bf n}; \ZZ_k)
\]
satisfy 
\[
	\trf \circ \res = [\wreathOfSym{k}{\bf n}:\wreathOfSym{k}{\bf n}^{(p)}] \cdot \id
\]
due to \cite[Lem.\,5.4]{BlagojevicLueckZiegler15}. Moreover, $\res$ maps the obstruction class $\mathfrak{o}_{\wreathOfSym{k}{\bf n}}$ to the restricted obstruction class $\mathfrak{o}_{\wreathOfSym{k}{\bf n}^{(p)}}$, that is
\[
	\res(\mathfrak{o}_{\wreathOfSym{k}{\bf n}}) = \mathfrak{o}_{\wreathOfSym{k}{\bf n}^{(p)}}.
\]
Hence, it follows that
\begin{equation} \label{eq: [wr:wr^(p)] obstr = 0}
	[\wreathOfSym{k}{\bf n}:\wreathOfSym{k}{\bf n}^{(p)}] \cdot \mathfrak{o}_{\wreathOfSym{k}{\bf n}} = (\trf \circ \res) (\mathfrak{o}_{\wreathOfSym{k}{\bf n}}) = \trf(\mathfrak{o}_{\wreathOfSym{k}{\bf n}^{(p)}}) = 0.
\end{equation}
Since we have
\[
	\gcd \big\{[\wreathOfSym{k}{\bf n}:\wreathOfSym{k}{\bf n}^{(p)}]:~ p \text{ is a prime dividing } |\wreathOfSym{k}{\bf n}|\big\} = 1,
\]
there exists a $\Z$-linear combination
\[
	\sum_{p \big| |\wreathOfSym{k}{\bf n}| \text{ prime}} x_p \cdot [\wreathOfSym{k}{\bf n}:\wreathOfSym{k}{\bf n}^{(p)}] ~ = ~ 1.
\]
Finally, this equality and \eqref{eq: [wr:wr^(p)] obstr = 0} imply
\[
	\mathfrak{o}_{\wreathOfSym{k}{\bf n}} = \sum_{p \big| |\wreathOfSym{k}{\bf n}| \text{ prime}} x_p \cdot [\wreathOfSym{k}{\bf n}:\wreathOfSym{k}{\bf n}^{(p)}] \cdot \mathfrak{o}_{\wreathOfSym{k}{\bf n}} ~ = ~ 0,
\]
as desired.
\end{proof}


\medskip
\section{Continuity of partitions} 
\label{section: continuity of partitions}
\medskip

The main result of the section is Theorem \ref{theorem: continuity of partitions} on continuity of partitions. Here, we offer a direct proof. See also \cite[Thm.~5.2]{AkopyanAvvakumovKarasev18}.

\medskip
The set $\K^d$ of convex bodies in $\R^d$ with non-empty interior is a metric space with \emph{Hausdorff metric} given by
\[
	\dH(X,Y) := \max \{\sup_{x \in X}\dist(x,Y),\, \sup_{y \in Y}\dist(y,X)\}.
\]
Another metric on the same space, namely \emph{the symmetric difference metric} 
\[
	\dS(X,Y) := \mathcal{L}^d(X \triangle Y),
\]
where $\mathcal{L}^d$ is the Lebesgue measure on $\R^d$, induces the same topology on $\K^d$ (see \cite{GruberKenderov82}).

\subsection{Existence of weights}

We start with the definition of the generalised Voronoi diagram.

\medskip
\begin{definition}
	For $x := (x_1, \dots , x_n) \in \conf(\R^d, n)$ and $w := (w_1, \dots , w_n) \in \R^n$ let us denote by 
	\[
		C(x,w) := (C_1(x,w), \dots , C_n(x,w))
	\]
	the {\em generalised Voronoi diagram with sites $x$ and weights $w$}, where
	\begin{equation} \label{eq: generalised voronoi diagram, ith component}
		C_i(x, w) := \{p \in \R^d: ~ \|p-x_i\|^2 - w_i \le \|p-x_j\|^2 - w_j ~~ \text{ for all } 1 \le j \le n\}
	\end{equation}
	denotes the {\em $i$the Voronoi cell} for each $1 \le i \le n$.
\end{definition}

\medskip
For each $1 \le i \le n$, the region $C_i(x,w)$ can be considered as the set of points $p \in \R^d$ where the affine function 
\[
	f_i(p) := \|p - x_i\|^2 - \|p \|^2 - w_i
\]
takes minimal value among all functions $f_1, \dots, f_n$. In particular, regions $C_i(x,w)$ have disjoint interiors and cover the whole $\R^d$.

\medskip
Notice that $C(x,w) = C(x, w + (t, \dots ,t))$ for any $t \in \R$, so we will usually restrict our attention to weights $w \in W_n  \subseteq \R^n$. 

\medskip
\begin{definition} \label{def: K(x,w)}
	For a convex body $K \in \K^d$, sites $x \in \conf(\R^d, n)$ and weights $w \in W_n$, let us denote by
	\[
		K(x,w) := (K_1(x,w), \dots , K_n(x,w))
	\]
	the partition of $K$ given by $K_i(x,w) := K \cap C_i(x,w)$ for each $1 \le i \le n$.
\end{definition}

\medskip
As before, for $K \in \K^d$ and a probability measure $\mu$ on $\R^d$ which is absolutely continuous with respect to the Lebesgue measure, let $\EMP_{\mu}(K, n) \subseteq (\K^d)^{\times n}$ denote the space of equal mass partitions of $K$ with respect to the measure $\mu$.

\medskip
\begin{lemma} [Existence of weights, {\cite[Thm.\,1]{GKPR12}}] \label{lemma: existence of unique weights}
	Let $d \ge 1$ and $n \ge 1$ be integers, $\lambda \in (0,1]^n$ be a vector with $\lambda_1 + \dots + \lambda_n = 1$. Let $K \in \K^d$ be a convex body, and let $\mu$ be a probability measure on $\R^d$ which is absolutely continuous with respect to the Lebesgue measure. For any choice of sites $x \in \conf(\R^d, n)$ there exist unique weights $w_{K;\lambda}(x) \in W_n$ such that the generalised Voronoi diagram $C(x, w_{K;\lambda}(x))$ produces a partition of $K$
	\[
		K(x, w_{K;\lambda}(x)) = K \cap C(x, w_{K;\lambda}(x)) \in (\K^d)^{\times n}
	\]
	such that
	\[
		\mu(K_i(x, w_{K;\lambda}(x))) = \lambda_i = \lambda_i \cdot \mu(K)
	\]
	for each $1 \le i \le n$. 
	
	\medskip
	In particular, for the unique weight vector $w_K(x) := w_{K; (\frac{1}{n}, \dots , \frac{1}{n})}(x)$ we have	
	\[
		K(x) := K(x, w_K(x)) \in \EMP_{\mu}(K, n).
	\]
\end{lemma}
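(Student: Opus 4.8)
The plan is to realise the weight vector $w_{K;\lambda}(x)$ as the unique maximiser of an explicit concave potential on $W_n$, to establish existence of this maximiser by a compactness argument, and to deduce uniqueness from strict concavity. Fix sites $x=(x_1,\dots,x_n)\in\conf(\R^d,n)$, a convex body $K\in\K^d$, and a vector $\lambda\in(0,1]^n$ with $\lambda_1+\dots+\lambda_n=1$; for $n\ge 2$ this forces $\lambda_i\in(0,1)$ for every $i$, and we assume, as in \cite{GKPR12}, that $\mu$ has a density positive on $K$ (this hypothesis is needed only for uniqueness; in particular $\mu(K)>0$). For $w\in\R^n$ put
\[
	g(w)\;:=\;\int_K\min_{1\le j\le n}\bigl(\|p-x_j\|^2-w_j\bigr)\,d\mu(p)\;+\;\mu(K)\sum_{j=1}^n\lambda_j w_j .
\]
I would first record three properties. (1) For each fixed $p$ the integrand is a minimum of affine functions of $w$, hence concave and bounded on the compact set $K$, so $g$ is a finite concave function on $\R^n$, invariant under $w\mapsto w+(t,\dots,t)$, and bounded above by $\sum_j\lambda_j\int_K\|p-x_j\|^2\,d\mu$ (bound the minimum by the $\lambda$-weighted average). (2) The absolute continuity of $\mu$ enters through the tie set: the locus where the minimum is attained by two indices $i\ne j$ lies in the hyperplane $\{\,2\langle p,x_j-x_i\rangle=\|x_j\|^2-\|x_i\|^2-w_j+w_i\,\}$ (a hyperplane since $x_i\ne x_j$), hence is Lebesgue-null and so $\mu$-null; since changing one coordinate of $w$ by $t$ moves the integrand by at most $|t|$, dominated convergence gives $g\in C^1$ with $\partial_{w_i}g(w)=\mu(K)\lambda_i-\mu\bigl(K_i(x,w)\bigr)$, and the same a.e.\ convergence of cell indicators makes these derivatives continuous in $w$. (3) As $\sum_i\mu(K_i(x,w))=\mu(K)$, the gradient lies in $W_n$, so a point of $W_n$ is critical for $g$ iff $\mu(K_i(x,w))=\lambda_i\mu(K)$ for every $i$, and by concavity such a point is a global maximum. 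The statement is thus reduced to: $g$ attains its maximum on $W_n$, uniquely.

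For existence I would control $g$ along rays. Fix $w_0\in W_n$ and a unit vector $u\in W_n$, set $u^*:=\max_j u_j$ and $S(u):=\{\,j:u_j=u^*\,\}$; since $\sum_j u_j=0$ and $u\ne 0$ we have $u^*>0$ and $S(u)\subsetneq[n]$. For $t$ large (depending on $u$ only, by compactness of $K$) the terms with index outside $S(u)$ are overwhelmed by the factor $t(u^*-u_j)>0$, so the integrand of $g(w_0+tu)$ equals $-tu^*+\min_{j\in S(u)}\bigl(\|p-x_j\|^2-w_{0,j}\bigr)$; a direct computation then gives
\[
	g(w_0+tu)\;=\;t\,\mu(K)\sum_{j\notin S(u)}\lambda_j\,(u_j-u^*)\;+\;O(1)\qquad(t\to\infty),
\]
where the coefficient of $t$ is strictly negative because $\lambda_j>0$ and $u_j<u^*$ for every $j\notin S(u)$ and this index set is nonempty. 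Hence $g\to-\infty$ along every ray of $W_n$, so every superlevel set $\{\,w\in W_n:g(w)\ge c\,\}$ is convex, closed and contains no ray, hence is bounded; choosing $c:=g(w_0)$ produces a nonempty compact set on which the continuous function $g$ attains a maximum, which is then its global maximum. This verifies the existence of $w_{K;\lambda}(x)$, and is, together with the measure-theoretic bookkeeping behind the $C^1$ claim (nullity of tie sets, dominated convergence), the main technical point; the rest is formal.

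For uniqueness I would argue that $g$ is in fact strictly concave on $W_n$: were $g$ affine along a segment $[w,w']$ with $w\ne w'$, the minimising index would be constant along the segment for $\mu$-a.e.\ $p$, whereas on the positive-measure region near the bisector of two cells whose weights vary at different rates along the segment that index switches -- contradicting positivity of the density on $K$. (Equivalently: if $w'_i-w_i$ is maximal among the coordinates then $\interior C_i(x,w)\subseteq C_i(x,w')$, and the equality $\mu(K_i(x,w))=\mu(K_i(x,w'))$ would force the density to vanish on the region gained.) Strict concavity gives a unique maximiser, hence a unique $w_{K;\lambda}(x)$. Finally, each $K_i(x,w)=K\cap C_i(x,w)$ is convex, being the intersection of $K$ with finitely many closed half-spaces, and is $d$-dimensional since $\mu(K_i(x,w))=\lambda_i\mu(K)>0$ forces positive Lebesgue measure and hence nonempty interior; thus $K(x,w)\in(\K^d)^{\times n}$, and for $\lambda=(\tfrac1n,\dots,\tfrac1n)$ we get $K(x)\in\EMP_{\mu}(K,n)$.
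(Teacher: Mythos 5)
Your argument is correct in substance but takes a genuinely different route from the paper's. You construct the weights variationally, as the unique maximiser over $W_n$ of the concave dual functional $g(w)=\int_K\min_j\bigl(\|p-x_j\|^2-w_j\bigr)\,d\mu+\mu(K)\sum_j\lambda_jw_j$; this is essentially the optimal-transport duality argument that the paper attributes to \cite[Sec.\,2]{KarasevHubardAronov14} and \cite[Sec.\,2]{BlagojevicZiegler15} and deliberately replaces. The paper instead gives a topological proof (due to Firsching): it encodes weights as $w(z)=(\log z_1,\dots,\log z_n)$ for $z\in\Delta_{n-1}$, studies the self-map $m(z)=\bigl(\mu(K_i(x,w(z)))\bigr)_i$ of the simplex, and gets surjectivity from a degree/homotopy argument (faces map to faces, Lemma \ref{lem: simplex to simplex, surjective and injective on the preimage of interior}(i)) and injectivity over the interior from a monotonicity criterion (Lemma \ref{lem: simplex to simplex, surjective and injective on the preimage of interior}(ii) together with \cite[Lem.\,2]{GKPR12}). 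Your route is self-contained convex analysis, with the ray-coercivity computation on $W_n$ as the only real work; the paper's route avoids convex duality at the price of importing the monotonicity input from \cite{GKPR12}. Your existence part (concavity, $C^1$ via $\mu$-nullity of the tie hyperplanes and dominated convergence, gradient $\mu(K)\lambda_i-\mu(K_i(x,w))$ lying in $W_n$, $g\to-\infty$ along every ray, hence compact superlevel sets) is complete and correct, as is the final verification that each piece lies in $\K^d$.

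Two points in the uniqueness step should be tightened. First, $g$ is not strictly concave on all of $W_n$: if some cell misses $K$ entirely, $g$ is affine in the direction that only lowers that cell's weight. What you actually need, and what your argument delivers, is uniqueness of the maximiser, i.e.\ that $g$ cannot be constant on a segment joining two critical points, at which every cell has measure $\lambda_i\mu(K)>0$. Second, in your parenthetical the single index $i$ with maximal increment $w_i'-w_i$ need not gain anything inside $K$, because its walls towards other indices with the same maximal increment do not move; the clean fix is to take $S:=\{i:\ w_i'-w_i \text{ maximal}\}$, a nonempty proper subset of $[n]$, observe $U(w)\subseteq U(w')$ for $U(w):=\bigcup_{i\in S}C_i(x,w)$, and use connectedness of $\interior(K)$ and the intermediate value theorem for the defining function of $U$ to produce a ball in $\interior(K)$ contained in $U(w')\setminus U(w)$; since $\mu(K\cap U(w))=\sum_{i\in S}\lambda_i\mu(K)=\mu(K\cap U(w'))$, this contradicts positivity of the density on $K$. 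Finally, your added hypothesis that the density is positive on $K$ is not a defect: without it uniqueness of the weights can genuinely fail (take a density vanishing on a middle strip of $K$ separating two equal halves), and the paper's own proof uses it implicitly through \cite[Lem.\,2]{GKPR12}, so flagging it explicitly is the right call.
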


\medskip
The relevant case for this paper is  when $\lambda = (\frac{1}{n}, \dots , \frac{1}{n})$. 
Then, the equal mass partition $K(x) = (K_1(x), \dots , K_n(x))$ of a convex body $K$ with respect to $\mu$ guaranteed by the previous lemma is called the {\em regular equipartition} of $K$ with sites $x = (x_1, \dots , x_n)$. 

\medskip
In \cite[Sec.\,2]{KarasevHubardAronov14} and \cite[Sec.\,2]{BlagojevicZiegler15} the existence of unique weights from the lemma is proved via the theory of optimal transport. 
We include a topological proof of this fact developed by Moritz Firsching, which relies on some results from \cite{GKPR12}.

\medskip
Let us denote by $e_1, \dots , e_n \in \R^n$ the standard basis vectors, and  let $\Delta_{n-1} := \conv\{e_1, \dots , e_n\}$ be the standard $(n-1)$-dimensional simplex. 
The faces of $\Delta_{n-1}$ are convex sets of the form \mbox{$\conv\{e_i: i \in I\}$} for $I \subseteq [n]$. 
The first part of the proof of Lemma \ref{lemma: existence of unique weights} is the following lemma.

\medskip
\begin{lemma} \label{lem: simplex to simplex, surjective and injective on the preimage of interior}
	Let $f : \Delta_{n-1} \longrightarrow \Delta_{n-1}$ be a continuous self map of the simplex with coordinate functions $f_i$ for $i \in [n]$. Then, the following statements hold.
	\begin{compactenum}[\quad \em (i)]
		\item A map $f$ is surjective if $f(\sigma) \subseteq \sigma$ holds for all faces $\sigma \subseteq \Delta_{n-1}$.
		
		\item A map $f$ is injective on the preimage $f^{-1}(\interior(\Delta_{n-1}))$ of the interior if additionaly the following condition holds:
			\begin{compactitem}[ ~]
				\item For a non-empty subset $I \subsetneq [n]$, a point $x \in \interior(\Delta_{n-1})$ and $\alpha \in (0,1)$,  let us denote   
					\[
						t := \sum_{i \in I} x_i \in (0,1) ~~~ \text{ and } ~~~  x^I_{\alpha} := \alpha\left(\sum_{i \in I} \frac{x_i}{t} e_i\right) + (1 - \alpha) \left(\sum_{j \in [n] \setminus I} \frac{x_j}{1-t} e_j\right) \in \interior (\Delta_{n-1}).
					\]
					Then, for every $x \in \interior(\Delta_{n-1})$, every non-empty subset $I \subsetneq [n]$ and every $\alpha \in (t, 1)$ we have $f_i(x^I_{\alpha}) \ge f_i(x)$ for every $i \in I$, with strict inequality holding for at least one $i\in I$.
			\end{compactitem}
	\end{compactenum} 
\end{lemma}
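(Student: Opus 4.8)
The two assertions are essentially independent: (i) is topological, and (ii) combines the monotonicity condition with the face‑preservation hypothesis of (i), which I read as remaining in force throughout (ii).

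For (i) I would argue by a degree computation, after disposing of the trivial case $n=1$. The hypothesis $f(\sigma)\subseteq\sigma$ for all faces $\sigma$ gives in particular $f(\partial\Delta_{n-1})\subseteq\partial\Delta_{n-1}$, and the straight‑line homotopy $H(x,s)=(1-s)x+sf(x)$ keeps each point inside the (convex) face that contains it; hence it restricts to a homotopy from $f|_{\partial\Delta_{n-1}}$ to $\id_{\partial\Delta_{n-1}}$ through self‑maps of $\partial\Delta_{n-1}\cong S^{n-2}$, so $f|_{\partial\Delta_{n-1}}$ has degree $\pm1$. If some point $p\in\interior(\Delta_{n-1})$ were missed by $f$, then the radial retraction $r_p\colon\Delta_{n-1}\setminus\{p\}\to\partial\Delta_{n-1}$ -- which is the identity on $\partial\Delta_{n-1}$, since $p$ is interior -- would turn $r_p\circ f\colon\Delta_{n-1}\to\partial\Delta_{n-1}$ into a continuous extension over the ball $\Delta_{n-1}$ of the degree‑$\pm1$ map $f|_{\partial\Delta_{n-1}}$, which is impossible. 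Hence $\interior(\Delta_{n-1})\subseteq\operatorname{im}f$; since $\operatorname{im}f$ is compact, hence closed, and contains a dense subset, $f$ is surjective.

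For (ii) I would first note that if $f(x)\in\interior(\Delta_{n-1})$ then $x$ is interior, because $x$ lying in a proper face $\sigma$ would force $f(x)\in f(\sigma)\subseteq\sigma\subseteq\partial\Delta_{n-1}$; thus $f^{-1}(\interior\Delta_{n-1})\subseteq\interior\Delta_{n-1}$, and it suffices to prove $f$ injective on the interior. I would then record a symmetric form of the hypothesis: applying the condition both to $I$ and to $[n]\setminus I$ (using that $x^I_\alpha$ is simultaneously a radial move for the complementary subset) shows that moving an interior point radially toward the face $\sigma_I=\conv\{e_i:i\in I\}$ weakly increases every $f_i$ with $i\in I$, weakly decreases every $f_j$ with $j\notin I$, with at least one strict inequality in each group; equivalently $\sum_{i\in I}f_i$ strictly increases along such a move. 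To conclude, the cleanest route I see is a degree / covering‑space argument. First, $f$ restricts to a \emph{proper} map $g\colon f^{-1}(\interior\Delta_{n-1})\to\interior\Delta_{n-1}$, since continuity together with $f(\partial\Delta_{n-1})\subseteq\partial\Delta_{n-1}$ keeps preimages of compact sets off the boundary. Second -- and this is the technical heart -- the strict monotonicity should make $f$ an orientation‑preserving local homeomorphism at every interior point: local injectivity from the monotonicity along radial moves, and openness then from invariance of domain. A proper such map onto the connected manifold $\interior\Delta_{n-1}$ is a covering all of whose local degrees are $+1$; and the straight‑line homotopy already used exhibits $f$ as homotopic to $\id$ as a map of pairs $(\Delta_{n-1},\partial\Delta_{n-1})$, so $g$ has degree one. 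A covering with total degree one and all local degrees $+1$ has exactly one sheet, hence is a homeomorphism, and in particular $f$ is injective on the interior.

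I expect the genuine obstacle to be precisely the injectivity step in (ii): the hypothesis only controls $f$ along one‑parameter families of radial moves, and two arbitrary interior points need not be joined by a single such move, so the real work is to upgrade this directional monotonicity into local -- and hence global -- injectivity of a map that is only assumed continuous.
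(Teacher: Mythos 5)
Part (i) of your proposal is fine and is essentially the paper's argument in a different guise: the paper pushes the straight-line homotopy to the quotient $\Delta_{n-1}/\partial\Delta_{n-1}\approx S^{n-1}$ and uses that a non-surjective self map of a sphere is nullhomotopic, while you work with $f|_{\partial\Delta_{n-1}}$ and a no-extension argument; these are equivalent. The problem is part (ii). Your plan hinges on the claim that the radial monotonicity hypothesis makes $f$ an orientation-preserving local homeomorphism on the interior, and you yourself flag that you do not know how to prove local injectivity -- but that claim \emph{is} the content of the lemma, so as written the proposal has a genuine gap rather than a proof. Moreover, it is doubtful that this route can be completed as stated: through a fixed interior point $x$ the hypothesis only controls $f$ along the finitely many segments $\alpha\mapsto x^I_{\alpha}$ (one for each non-empty $I\subsetneq[n]$ and its complement), and directional monotonicity of a merely continuous map along finitely many lines through each point does not force local injectivity; even if local injectivity were granted, you would still owe an argument that every local degree is $+1$ (a local homeomorphism can have sheets of degree $-1$, and ``degree one'' globally only controls the signed count).

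The paper avoids local analysis entirely and proves \emph{global} injectivity on $f^{-1}(\interior(\Delta_{n-1}))$ by a chain argument in the spirit of Gei\ss--Klein--Penninger--Rote. Suppose $f(x)=f(y)\in\interior(\Delta_{n-1})$ with $x\neq y$; then $x,y$ are interior, and one sets $I(x,y):=\{i\in[n]:\ x_i/y_i=\min_j x_j/y_j\}$, a non-empty proper subset. Moving $x$ radially with respect to $I(x,y)$, i.e.\ passing to $x^{I(x,y)}_{\alpha}$ for a suitable $\alpha$, one can strictly enlarge the minimal-ratio set, and iterating produces a finite chain $x=x^0,x^1,\dots,x^k=y$ with $I(x^0,y)\subsetneq I(x^1,y)\subsetneq\dots\subsetneq[n]$ (at most $n-1$ steps, since $I(x^l,y)=[n]$ forces $x^l=y$). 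The hypothesis of (ii) guarantees that along each step $f_i$ is non-decreasing for every $i$ in the current index set, with strict increase at some $i\in I(x^0,y)$ at the first step; since $I(x^0,y)$ is contained in all later index sets, for that index $f_i(x)=f_i(x^0)<f_i(x^1)\le\dots\le f_i(x^k)=f_i(y)$, contradicting $f(x)=f(y)$. So the missing ingredient in your write-up is exactly this mechanism for joining two arbitrary points with equal image by finitely many of the controlled radial moves; once you have it, no covering-space or degree input is needed for (ii).
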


\medskip
The condition in part (ii) of the lemma says the following. For a point $x \in \interior(\Delta_{n-1})$ we have that $\sum_{i \in I} \frac{x_i}{t} e_i$ is its projection to the face of the simplex spanned by $\{e_i: i\in I\}$, while $\sum_{j \in [n] \setminus I} \frac{x_j}{1-t} e_j$ is its projection to the complementary face. In particular, $x$ is a convex combination of the two projections, namely $x = x^I_t$. Notice that 
\[
	\{x^I_{\alpha}: \alpha \in (t, 1)\} = \Big(x, ~\sum_{i \in I} \frac{x_i}{t} e_i\Big ) \subseteq \interior(\Delta_{n-1}),
\]
and the condition requires that for each point $x^I_{\alpha}$ in this segment the values of $f_i$, for $i \in I$, are at least as large as $f_i(x)$, with at least one strict inequality.

\begin{proof}[Proof of Lemma \ref{lem: simplex to simplex, surjective and injective on the preimage of interior} due to Moritz Firsching]
(i) The map $f$ is homotopic to the identity via $f_{\tau} :=\tau f+(1-\tau)\id_{\Delta_{n-1}}$ for $\tau \in [0,1]$. Each map $f_{\tau}$ in the homotopy satisfies $f_\tau (\sigma) \subseteq \sigma$ and hence induces a homotopy of quotient maps
		\[
			\widetilde{f} \simeq \id_{\Delta_{n-1}/\partial \Delta_{n-1}}: \Delta_{n-1}/\partial \Delta_{n-1} \longrightarrow \Delta_{n-1}/\partial \Delta_{n-1}.
		\]
		Since $\Delta_{n-1}/\partial \Delta_{n-1} \approx S^{n-1}$, the $\widetilde{f}$ is not nullhomotopic and hence is surjective, since every non-surjective self map of the sphere is necessarily nullhomotopic. The surjectivity of the quotient map $\widetilde{f}$ implies the surjectivity of $f$ by the fact that $\interior(\Delta_{n-1}) = \Delta_{n-1} \setminus \partial \Delta_{n-1}$ is dense in $\Delta_{n-1}$.
		
\medskip\noindent
(ii) The injectivity argument is similar to the one in \cite[Thm.\,1]{GKPR12}. Suppose for two different points $x,y \in \Delta_{n-1}$ we have $f(x) = f(y) \in \interior(\Delta_{n-1})$. Then $x,y \in \interior(\Delta_{n-1})$, since $f$ maps each face to iteself. Define
			\[
				I(x,y) := \{ i \in [n]:~ x_i/y_i= \min_{1 \le j \le n} x_j/y_j\}.
			\] 
	Since $x \neq y$ we have $\emptyset \neq I \subsetneq [n]$. We will inductively define a sequence of points
			\[
				x = x^0, x^1, \dots , x^k  = y \in \interior(\Delta_{n-1})
			\]
	for some integer $k \ge 1$, which satisfy 
			\[
				I ({x^0, y}) \subsetneq I({x^1, y}) \subsetneq \dots \subsetneq I({x^{k-1}, y}) \subsetneq [n]
			\]
	and such that for each $1 \le l \le k$ we have $f_i(x^{l-1}) \le f_i(x^{l})$ for all $i \in I({x^{l-1}, y})$, with the strict inequality holding true for at least one index in $i \in I({x^{l-1}, y})$. Indeed, if we prove this, than for the index $i \in I(x^0, y)$ for which the strict inequality $f_i(x^0) < f_i(x^1)$ holds, we would have
			\[
				f_i(x) = f_i(x^0) < f_i(x^1) \le \dots \le f_i(x^k) = f_i(y),
			\]
	which contradicts $f(x) = f(y)$.
				
	\medskip
	Suppose $x^0, \dots , x^{l-1} \neq y$ with the above properties have already been constructed for some $l \ge 1$, and let us construct $x^l$. Since $I({x^{l-1}, y}) \subsetneq [n]$, by the assumption of part (ii), there exists some $\alpha$ with 
			\[
				\sum_{i \in I({x^{l-1}, y})} x^{l-1}_i < \alpha < 1
			\]
	such that the point $x^l := x_{\alpha}^{I({x^{l-1}, y})}$ satisfies $I({x^{l-1}, y}) \subsetneq I({x^l, y})$ and $f_i(x^{l}) \le f_i(x^{l})$ for all $i \in I({x^{l-1}, y})$, with the strict inequality for at least one index $i \in I({x^{l-1}, y})$. If $I({x^{l}, y}) = [n]$, then $k := l$ and $x^l = y$, so we stop induction. Otherwise, we continue for finite more steps until $I({x^k, y}) = [n]$.
\end{proof}

\begin{proof} [Proof of Lemma \ref{lemma: existence of unique weights} due to Moritz Firsching]
	Let us fix a point $x \in \conf(\R^d, n)$. We want to show the existence of a weight vector $w_{K; \lambda}(x) \in W_n$.
	
	\medskip
	 Given a point $z = (z_1, \dots ,z_n) \in \Delta_{n-1}$ set 
	 $
	 	w(z) := (\log(z_1),...,\log(z_n))
	 $.
	 Here we allow $\log(0) := -\infty$ and extend the definition of $C(x,w)$ to include weight vectors $w$ with some (but not all) coordinates being $-\infty$. 
	 We define the continuous map $m$ by
	 \[
	 	m\colon  \Delta_{n-1} \longrightarrow \Delta_{n-1}, \qquad z \longmapsto m(z) := (\mu(K_1(x,w(z))), \dots , \mu(K_n(x,w(z)))).
	 \]
If this map is surjective and injective on the preimage $m^{-1}(\interior(\Delta_{n-1}))$, we can set $w_{K; \lambda}(x) := w(z)$, where $\{z\} = m^{-1}(\{\lambda\})$ is the unique point in the fiber of $\lambda \in \interior(\Delta_{n-1})$. To prove the required properties of the map $m$ we use Lemma \ref{lem: simplex to simplex, surjective and injective on the preimage of interior}. 

\medskip
Before doing that, let us see why the map $m$ is continuous. 
First, notice that if we show it is continuous on the interior $\interior(\Delta_{n-1})$, the continuity of $m$ on the whole domain follows by density of the interior and the fact that the value of $m$ on the boundary is the limit value of interior points. 
Thus, let $z^k \to z$ as $k \to \infty$ be a convergence in $\interior(\Delta_{n-1})$. 
Then by part (ii) of Lemma \ref{lemma: hyperplane cuts off continuously} for each $1 \le i \le n$ we have
\[
	\leb\Big(K_i(x,w(z^k)) \triangle K_i(x,w(z))\Big) \xrightarrow{k \to \infty} 0.
\]
By \eqref{eq: lebesgue -> 0 => mu -> 0} it further follows that for each $1 \le i \le n$ we have
\[
	\mu(K_i(x,w(z^k))) \xrightarrow{k \to \infty} \mu(K_i(x,w(z))),
\]
proving ultimately that $m(z^k) \to m(z)$ as $k \to \infty$.

\medskip
To prove surjectivity of $m$, by Lemma \ref{lem: simplex to simplex, surjective and injective on the preimage of interior} part (i) it is enough to show that $m$ maps each face to itself. Indeed, assume for $z \in \Delta_{n-1}$ we have $z_i = 0$ for some $1 \le i \le n$. Then the $i$th coordinate of $w(z)$ is $-\infty$ and the $i$th coordinate of $m(z)$ is zero, since the $i$th Voronoi region $C_i(x, w(z))$ is empty. 
Therefore a face of $\Delta_{n-1}$ is mapped to itself by $m$.

\medskip
For injectivity of $m$ on the preimage $m^{-1}(\interior(\Delta_{n-1}))$ we use Lemma \ref{lem: simplex to simplex, surjective and injective on the preimage of interior} part (ii). We want to show that for any point $z \in \Delta_{n-1}$, nonempty $I\subsetneq [n]$, $t := \sum_{i \in I} z_i$ and $\alpha \in (t, 1)$ the condition in part (ii) is satisfied. Notice that the coordinate function of $w$ satisfies 
\[
	w_i(z_{\alpha}^I) = \log(\alpha/t) + w_i(z), ~~ \text{ for } i \in I
\]
and 
\[
	w_j(z_{\alpha}^I) = \log((1-\alpha)/(1-t)) + w_j(z), ~~ \text{ for } j \in [n] \setminus I.
\]
If we denote by $w'$ the weight vector obtained from $w(z_{\alpha}^I)$ by adding a constant 
\[
	-\log((1-\alpha)/(1-t)) = \log((1-t)/(1-\alpha))
\]
to each coordinate, we have equality of Voronoi diagrams
\[
	C(x, w(z_{\alpha}^I)) = C(x, w'),
\]
so one can use the weight vector $w'$ to calculate $m(z_{\alpha}^I)$.
Moreover, we have
\[
	w'_i = \log(\alpha/t) + w_i(z) + \log((1-t)/(1-\alpha)) = \log(1/t - 1) - \log(1/\alpha - 1) + w_i(z) > w_i(z)
\]
for $i \in I$ and $w'_j = w_j(z)$ for $j \in [n] \setminus I$. In particular, for each $i \in I$ we have 
\[
	C_i(x, w(z)) \subseteq C_i(x, w') = C_i(x, w(z_{\alpha}^I)),
\]
which implies $m_i(z_{\alpha}^I) \ge m_i(z)$. Therefore, by \cite[Lem.\,2]{GKPR12} it follows that there exists an index $i \in I$ such that $m_i(z_{\alpha}^I) > m_i(z)$. Hence, the assumption in part (ii) of Lemma \ref{lem: simplex to simplex, surjective and injective on the preimage of interior} holds. 
\end{proof}

\subsection{Continuity of partitions}

Lemma \ref{lemma: existence of unique weights} implies that, for a given $K \in \K^d$, the function
\[
	\conf(\R^d, n) \longrightarrow W_n, ~~~ x \longmapsto w_K(x)
\]
exists and is $\sym_n$-equivariant. In \cite[Lem.\,3]{HubardAronov10} it was shown that the function is continuous on parameter $x \in \conf(\R^d, n)$. 
Since the function also depends on the convex body $K \in \K^d$ it is natural to ask: {\em Is the fuctnion $w_K(x)$ continuous in parameters $(x, K) \in \conf(\R^d, n) \times \K^d$?}

\medskip
We give a positive answer to this question in the lemma which follows. 
The proof is postponed to the end of Section \ref{subsection: deterred lemmas}. 

\medskip
\begin{lemma} [Continuity of weights] \label{lemma: continuity of weights}
	Let $\mu$ be a probability measure on $\R^d$ which is absolutely continuous with respect to the Lebesgue measure. The assignment
	\[
		\conf(\R^d, n) \times \K^d \longrightarrow W_n, ~ (x, K) \longmapsto w_K(x),
	\]
	given by Lemma \ref{lemma: existence of unique weights}, is continuous and $\sym_n$-equivariant. The $\sym_n$-action on the $\K^d$-coordinate in the domain is assumed to be trivial.
\end{lemma}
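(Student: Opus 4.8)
The plan is to deduce continuity from the \emph{uniqueness} of the weight vector in Lemma~\ref{lemma: existence of unique weights} together with the joint continuity of the generalised Voronoi masses, via a standard subsequence argument; throughout we use that $\mu(K)>0$, which is already implicit in Lemma~\ref{lemma: existence of unique weights}. Equivariance is immediate: from the defining inequalities \eqref{eq: generalised voronoi diagram, ith component} one has $C_i(\sigma\cdot x,\sigma\cdot w)=C_{\sigma^{-1}(i)}(x,w)$ for $\sigma\in\sym_n$, so $\sigma\cdot w_K(x)$ again produces an equal-mass partition of $K$ for the sites $\sigma\cdot x$, whence $w_K(\sigma\cdot x)=\sigma\cdot w_K(x)$ by uniqueness (and $K$, carrying the trivial action, plays no role). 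For continuity, since all spaces are metrisable it suffices to fix $(x^{(m)},K^{(m)})\to(x,K)$ in $\conf(\R^d,n)\times\K^d$, set $w^{(m)}:=w_{K^{(m)}}(x^{(m)})$, and show $w^{(m)}\to w_K(x)$. By the subsequence principle this follows once we establish: (a) $(w^{(m)})$ is bounded in $W_n$; and (b) every subsequential limit of $(w^{(m)})$ equals $w_K(x)$.

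Step (a) is the crux, since $W_n$ is not compact and weights can in principle run off to $-\infty$. Suppose $\|w^{(m)}\|\to\infty$ along a subsequence; after passing to a further subsequence, $w^{(m)}/\|w^{(m)}\|\to u$ for some unit vector $u\in W_n$. As $\sum_i u_i=0$ and $u\neq 0$, the set $I:=\{i\in[n]:u_i=\max_j u_j\}$ is nonempty and a \emph{proper} subset of $[n]$. Fix $p\in\R^d$ and put $g^{(m)}_\ell(p):=\|p-x^{(m)}_\ell\|^2-w^{(m)}_\ell$; since $\|p-x^{(m)}_\ell\|^2$ stays bounded and $w^{(m)}_\ell=\|w^{(m)}\|u_\ell+o(\|w^{(m)}\|)$, for $i\in I$ and $j\notin I$ we get $g^{(m)}_i(p)-g^{(m)}_j(p)=-(\max_\ell u_\ell-u_j)\|w^{(m)}\|+o(\|w^{(m)}\|)\to-\infty$, so for large $m$ the minimum of $\ell\mapsto g^{(m)}_\ell(p)$ is attained in $I$, i.e.\ $p\in U^{(m)}:=\bigcup_{i\in I}C_i(x^{(m)},w^{(m)})$. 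Thus $\mathbf 1_{U^{(m)}}\to 1$ pointwise on $\R^d$; as the $K^{(m)}$ eventually lie in a fixed ball, dominated convergence gives $\leb(K^{(m)}\setminus U^{(m)})\to 0$, hence $\leb\big((K^{(m)}\cap U^{(m)})\triangle K\big)\to 0$ (using also $\leb(K^{(m)}\triangle K)\to 0$), and therefore $\mu(K^{(m)}\cap U^{(m)})\to\mu(K)$ and $\mu(K^{(m)})\to\mu(K)$ by \eqref{eq: lebesgue -> 0 => mu -> 0}. On the other hand the cells $C_i(x^{(m)},w^{(m)})$ have pairwise $\leb$-null overlaps, so the equal-mass property yields $\mu(K^{(m)}\cap U^{(m)})=\tfrac{|I|}{n}\mu(K^{(m)})$. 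Passing to the limit gives $\tfrac{|I|}{n}\mu(K)=\mu(K)$, and since $\mu(K)>0$ this forces $|I|=n$, contradicting that $I$ is proper. Hence $(w^{(m)})$ is bounded.

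For step (b), let $w^{(m_j)}\to w^\ast$ along a subsequence; necessarily $w^\ast\in W_n$. The key input here is the joint continuity of cutting a convex body by a generalised Voronoi cell, which I would obtain from the auxiliary results of Section~\ref{subsection: deterred lemmas}: each $C_i(x,w)=\bigcap_{j\neq i}\{p:\langle p,x_j-x_i\rangle\le\tfrac12(\|x_j\|^2-\|x_i\|^2+w_i-w_j)\}$ is an intersection of $n-1$ half-spaces whose bounding hyperplanes depend continuously on $(x,w)\in\conf(\R^d,n)\times W_n$, and applying Lemma~\ref{lemma: hyperplane cuts off continuously} once for each half-space (peeling them off one at a time, and working with the symmetric-difference, equivalently $\leb$-, distance so that possibly degenerate intermediate sets cause no trouble) gives $\leb\big((K^{(m_j)}\cap C_i(x^{(m_j)},w^{(m_j)}))\triangle(K\cap C_i(x,w^\ast))\big)\to 0$ for each $i$. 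By \eqref{eq: lebesgue -> 0 => mu -> 0} it follows that $\mu\big(K^{(m_j)}\cap C_i(x^{(m_j)},w^{(m_j)})\big)\to\mu(K\cap C_i(x,w^\ast))$. But the left-hand side equals $\tfrac1n\mu(K^{(m_j)})\to\tfrac1n\mu(K)$, so $\mu(K\cap C_i(x,w^\ast))=\tfrac1n\mu(K)$ for every $1\le i\le n$; in particular each $K\cap C_i(x,w^\ast)$ is convex of positive measure, hence a convex body, and $K(x,w^\ast)\in\EMP_{\mu}(K,n)$. By the uniqueness clause of Lemma~\ref{lemma: existence of unique weights} we conclude $w^\ast=w_K(x)$. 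Combining (a), (b), and the subsequence principle gives $w^{(m)}\to w_K(x)$.

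The main obstacles are exactly (a) and the continuity input used in (b). The bound (a) must rule out the degeneration of weights purely from the mass constraint and absolute continuity of $\mu$; the argument above does this by observing that the cells indexed by any proper subset $I\subsetneq[n]$ can never carry all of $\mu(K)$. The continuity used in (b) is the content of the deferred lemmas of Section~\ref{subsection: deterred lemmas} — the point requiring care is that $K\cap C_i(x,w)$ need not be full-dimensional for every $(x,w)$, which is why the whole argument is run with respect to the symmetric-difference metric (which, as recalled at the start of Section~\ref{section: continuity of partitions}, induces the same topology on $\K^d$ as $\dH$ and interacts well with $\mu\ll\leb$).
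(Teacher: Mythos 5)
Your proposal is correct, and its skeleton coincides with the paper's proof: fix a convergent sequence $(x^k,K^k)\to(x,K)$, prove the weights $w^k:=w_{K^k}(x^k)$ stay bounded in $W_n$, extract a subsequential limit $w^\ast$, use the symmetric-difference estimates of Lemma \ref{lemma: hyperplane cuts off continuously} together with \eqref{eq: lebesgue -> 0 => mu -> 0} to see that $w^\ast$ again produces an equal-$\mu$-mass partition of $K$ with sites $x$, and conclude $w^\ast=w_K(x)$ from the uniqueness clause of Lemma \ref{lemma: existence of unique weights}; your equivariance argument via $C_i(\sigma\cdot x,\sigma\cdot w)=C_{\sigma^{-1}(i)}(x,w)$ and uniqueness is also fine. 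The one place where you genuinely diverge is the boundedness step: the paper argues directly that if some pairwise difference $|w^k_i-w^k_j|$ blew up, then (all $K^k$ and all sites lying in a fixed bounded region) one of the two corresponding cells would meet $K^k$ in the empty set for $k\gg 0$, contradicting that every piece carries mass $\tfrac1n\mu(K^k)>0$; you instead normalise, pass to a limit direction $u\in W_n$, and show that the cells indexed by the argmax set $I\subsetneq[n]$ of $u$ eventually absorb all of $K^{(m)}$, forcing $\tfrac{|I|}{n}\mu(K)=\mu(K)$ and hence $|I|=n$. Both arguments work; the paper's is shorter and purely geometric, yours is a bit longer (pointwise convergence plus dominated convergence) but makes explicit where the mass goes when weights degenerate. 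One technical remark: when you ``peel off half-spaces'' you invoke Lemma \ref{lemma: hyperplane cuts off continuously} in a form without its standing hypothesis that the intermediate and limiting intersections lie in $\K^d$; as you note this is harmless for $\leb$-symmetric-difference convergence, since $(K\cap H^+_{(a^k,b^k)})\,\triangle\,(K\cap H^+_{(a,b)})$ is contained in a slab about $H_{(a,b)}$ of width tending to zero, but this small strengthening should be stated and proved rather than cited, the paper's lemma being formulated only for convex bodies --- indeed the paper's own application of part (ii) at the limit pair $(x,w')$ tacitly relies on the same relaxation, since $K_i(x,w')$ is not known in advance to be full-dimensional.
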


\medskip
For a given convex body $K \in \K^d$, the continuity of the assignment 
\[
	\conf(\R^d, n) \longrightarrow \EMP_{\mu}(K, n), \qquad x \longmapsto K(x) = K(x, w_K(x)),
\]
given by Lemma \ref{lemma: existence of unique weights}, is proved in \cite[Thm.\,2.1]{KarasevHubardAronov14} and \cite[Thm.\,2.1]{BlagojevicZiegler15}. 
Again as before, this assignment depends on $K$, so the next natural question arises: {\em Is $K(x)$ continuous in parameters $(x, K) \in \conf(\R^d, n) \times \K^d$?}

\medskip
 The main result of this section is the following theorem which gives the positive answer to the previous question.

\medskip
\begin{theorem} [Continuity of partitions] \label{theorem: continuity of partitions}
	The function
	\[
		\conf(\R^d, n) \times \K^d \longrightarrow (\K^d)^{\times n}, \qquad (x, K) \longmapsto K(x),
	\]
	where $K(x)$ denotes the regular partition of $K$ with sites $x$ from Lemma \ref{lemma: existence of unique weights}, is a continuous $\sym_n$-equivariant map which satisfies the restriction property
	\[
		\conf(\R^d, n) \times \{ K \} \longrightarrow \EMP_{\mu}(K, n), \qquad (x, K) \longmapsto K(x),
	\]
	for each $K \in \K^d$.
\end{theorem}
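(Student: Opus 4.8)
The plan is to establish continuity at an arbitrary point $(x_0, K_0) \in \conf(\R^d, n) \times \K^d$ by a sequential argument carried out in the symmetric difference metric $\dS$, which by \cite{GruberKenderov82} induces the same topology on $\K^d$ as the Hausdorff metric. So let $(x^k, K^k) \to (x_0, K_0)$, put $w^k := w_{K^k}(x^k)$ and $w_0 := w_{K_0}(x_0)$, and observe that $w^k \to w_0$ by Lemma~\ref{lemma: continuity of weights}. It then suffices to prove, for each $1 \le i \le n$, that $\leb\big(K^k_i(x^k)\,\triangle\, K^0_i(x_0)\big) \to 0$, where $K^k_i(x^k) = K^k \cap C_i(x^k, w^k)$ and $K^0_i(x_0) = K_0 \cap C_i(x_0, w_0)$ in the notation of Definition~\ref{def: K(x,w)}.

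The first step I would carry out is the elementary inclusion
\[
	K^k_i(x^k)\,\triangle\, K^0_i(x_0) ~\subseteq~ \big(K^k \,\triangle\, K_0\big) \,\cup\, \Big(K_0 \cap \big(C_i(x^k, w^k)\,\triangle\, C_i(x_0, w_0)\big)\Big),
\]
obtained by case-checking which of the two defining conditions of a piece is violated. The Lebesgue measure of the first set equals $\dS(K^k, K_0) \to 0$ by hypothesis. For the second set, I would write each Voronoi cell as a finite intersection $C_i(x, w) = \bigcap_{j \ne i} H_{ij}(x, w)$ of closed half-spaces $H_{ij}(x, w) = \{\,p \in \R^d :\, 2\langle p,\, x_j - x_i\rangle \le \|x_j\|^2 - \|x_i\|^2 + w_j - w_i\,\}$, whose bounding affine hyperplane has \emph{nonzero} normal $x_j - x_i$ since $x \in \conf(\R^d, n)$, and whose coefficients depend continuously on $(x, w)$. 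Since the symmetric difference of two intersections is contained in the union of the pairwise symmetric differences, it is enough to bound $\leb\big(K_0 \cap (H_{ij}(x^k, w^k)\,\triangle\, H_{ij}(x_0, w_0))\big)$ for each fixed pair $(i,j)$; and because the indicator functions of $H_{ij}(x^k, w^k)$ converge, off the limiting hyperplane (a Lebesgue-null set), pointwise to the indicator of $H_{ij}(x_0, w_0)$, dominated convergence over the bounded set $K_0$ yields the claim. This measure-theoretic fact is exactly what the deferred Lemma~\ref{lemma: hyperplane cuts off continuously} supplies, and I would invoke it rather than reprove it inline.

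It remains to note the formal points. The map is well defined into $(\K^d)^{\times n}$ because each piece $K_i(x) = K \cap C_i(x, w_K(x))$ is convex and satisfies $\mu(K_i(x)) = \tfrac{1}{n}\mu(K) > 0$ by Lemma~\ref{lemma: existence of unique weights}, whence $\leb(K_i(x)) > 0$ as $\mu$ is absolutely continuous, so $K_i(x)$ has nonempty interior; the restriction property is then immediate from the same lemma. Equivariance is formal: $w_K(\sigma \cdot x) = \sigma \cdot w_K(x)$ by Lemma~\ref{lemma: continuity of weights}, and $C_i(\sigma \cdot x,\, \sigma \cdot w) = C_{\sigma^{-1}(i)}(x, w)$ straight from the definition of the Voronoi cell, so $K(\sigma \cdot x) = \sigma \cdot K(x)$. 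I expect the only real obstacle to be bookkeeping — ensuring the convergence $w^k \to w_0$ and the half-space measure estimates are packaged uniformly in $k$ — since the genuinely technical ingredient, the joint continuity of the weight assignment in $(x, K)$, has been isolated in Lemma~\ref{lemma: continuity of weights}.
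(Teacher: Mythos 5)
Your proposal is correct and follows essentially the same route as the paper: sequential continuity in the symmetric difference metric, with the weight convergence $w^k \to w_0$ supplied by Lemma~\ref{lemma: continuity of weights}, and the symmetric difference of the pieces split into a term bounded by $\dS(K^k,K_0)$ and a term handled by Lemma~\ref{lemma: hyperplane cuts off continuously}(ii); your set-theoretic inclusion is just an unpacked form of the paper's triangle inequality. The only cosmetic difference is your inline dominated-convergence sketch for the half-space estimate, which you anyway defer to the same auxiliary lemma the paper invokes.
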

\begin{proof}		
	To show the claim, it is enough to show that for each $1 \le i \le n$ the coordinate map
	\[
		\conf(\R^d, n) \times  \K^d \longrightarrow \K^d, \qquad (x, K) \longmapsto K_i(x,w_K(x)) = K \cap C_i(x, w_K(x))
	\]
	is continuous, where weights $w_K(x)$ are given by Lemma \ref{lemma: existence of unique weights} and the region $C_i(x, w_K(x))$ is the $i$th component \eqref{eq: generalised voronoi diagram, ith component} of the generalised Voronoi diagram $C(x,w_K(x))$ with sites $x$ and weights $w_K(x)$.
	 
	To show sequential continuity of the coordinate map, let
	\[
		(x^k, K^k) \xrightarrow{~~ k \to \infty ~~} (x, K) \in \conf(\R^d, n) \times \K^d
	\]
	be a converging sequence, and let us denote the weights by $w := w_K(x)$ and $w^k := w_{K^k}(x^k)$. 
	By Lemma \ref{lemma: continuity of weights}, we have $w^k \to w$ as $k \to \infty$. 
	
	\medskip
	Moreover, since $K_i(x,w) = K \cap C_i(x,w)$ has a non-empty interior and $(x^k, w^k) \to (x,w)$ as $k \to \infty$, it follows that 
	\[
		K_i(x^k,w^k) = K \cap C_i(x^k,w^k)
	\]
	has a non-empty interior as well, hence $K_i(x^k,w^k) \in \K^d$. 
	Therefore, all notions used in the following string of inequalities are well defined. We have
	\begin{align*}
		\dS(K_i(x, w), K^k_i(x^k, w^k)) &\le \dS(K_i(x, w), K_i(x^k, w^k)) + \dS(K_i(x^k, w^k), K^k_i(x^k, w^k))\\
		& \le \dS(K_i(x, w), K_i(x^k, w^k)) + \dS(K, K^k) \xrightarrow{ ~~ k \to \infty ~~} 0,
	\end{align*}
	because the first summand in the last row tends to zero as $k \to \infty$ by Lemma \ref{lemma: hyperplane cuts off continuously} (ii), so the claim of the lemma follows.
\end{proof}

\subsection{Auxiliar lemmas} \label{subsection: deterred lemmas}

For a vector $(a,b) \in \R^d \times \R$ such that $a \neq 0 \in \R^d$ let us denote by
\[
	H_{(a,b)} := \{p \in \R^d \ : \ \langle a,  p\rangle + b = 0\}
\]
the affine hyperplane induced by $(a,b)$, and by 
\[
	H^+_{(a,b)} := \{p \in \R^d\ : \ \langle a,  p\rangle + b \ge 0\} 
\]
the corresponding closed half-space.

\medskip
\begin{lemma} \label{lemma: hyperplane cuts off continuously}
	Let $K \in \K^d$. Then, the following statements are true.
	\begin{compactenum}[\quad\rm   (i)]
		\item Assume the convergence
		\[
			(a^k, b^k) \xrightarrow{~k \to \infty ~} (a,b) \in \R^{d}\times \R
		\]
		with $a, a^k\neq 0$, as well as
		\[
			K \cap H_{(a,b)}^{+},~ K \cap H_{(a^k,b^k)}^{+} \in \K^d
		\]
		for each $k \ge 1$. Then
	\[
		K \cap H_{(a^k,b^k)}^+ \xrightarrow{~\dS~} K \cap H_{(a,b)}^+
	\]
	as $k \to \infty$.
	\item Assume the convergence
	\[
		(x^k, w^k) \xrightarrow{~ k \to \infty ~} (x,w) \in \conf(\R^d, n) \times W_n,
	\]
	and let $1 \le i \le n$, as well as
	\[
		K_i(x,w), K_i(x^k, w^k) \in \K^d
	\]
	for each $k \ge 1$. Then
	\[
		K_i(x^k,w^k) \xrightarrow{~\dS~} K_i(x,w)
	\]
	as $k \to \infty$.
	\end{compactenum}
	\end{lemma}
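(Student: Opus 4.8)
The plan is to deduce both statements from a single elementary estimate: whenever one intersects the \emph{bounded} body $K$ with a region carved out by finitely many affine inequalities, and those inequalities vary continuously, the symmetric difference of the resulting bodies is trapped inside a finite union of thin slabs, whose $d$-dimensional Lebesgue measure tends to $0$. The two ingredients I would use throughout are the set identity $(K \cap A) \triangle (K \cap B) = K \cap (A \triangle B)$, and the fact that an affine function $p \mapsto \langle a, p \rangle + b$ whose coefficients $(a,b)$ converge converges uniformly on the bounded set $K$.

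\medskip
For part (i), first I would write
\[
	\big(K \cap H^+_{(a^k,b^k)}\big) \triangle \big(K \cap H^+_{(a,b)}\big) = K \cap \big(H^+_{(a^k,b^k)} \triangle H^+_{(a,b)}\big),
\]
set $g(p) := \langle a, p\rangle + b$, $g^k(p) := \langle a^k, p\rangle + b^k$, and $\varepsilon_k := \sup_{p \in K}|g^k(p) - g(p)|$, which tends to $0$. The key observation is that if $p \in K$ satisfies $|g(p)| > \varepsilon_k$, then $g^k(p)$ has the same (non-zero) sign as $g(p)$, so $p$ belongs to both half-spaces or to neither; consequently
\[
	K \cap \big(H^+_{(a^k,b^k)} \triangle H^+_{(a,b)}\big) \subseteq \{p \in K :\ |\langle a, p\rangle + b| \le \varepsilon_k\}.
\]
Since $a \neq 0$, the set on the right is the intersection of the bounded body $K$ with a slab of width $2\varepsilon_k / \|a\|$, hence of Lebesgue measure $O(\varepsilon_k) \to 0$. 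This yields $\dS\big(K \cap H^+_{(a^k,b^k)},\, K \cap H^+_{(a,b)}\big) \to 0$.

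\medskip
For part (ii), I would first rewrite each Voronoi cell as a finite intersection of half-spaces, exactly as recalled after the definition of $C(x,w)$: a point $p$ lies in $C_i(x,w)$ if and only if $g_{ij}(p) := f_j(p) - f_i(p) \ge 0$ for every $j \neq i$, where $f_j(p) = \|p - x_j\|^2 - \|p\|^2 - w_j$; explicitly $g_{ij}(p) = \langle 2(x_i - x_j), p\rangle + \|x_j\|^2 - \|x_i\|^2 + w_i - w_j$, which is affine with gradient $2(x_i - x_j) \neq 0$ because $x_i \neq x_j$, and whose coefficients depend continuously (in fact polynomially) on $(x,w)$. Writing $g_{ij}^k$ for the functions built from $(x^k, w^k)$ and $\varepsilon_k := \max_{j \neq i}\sup_{p \in K}|g_{ij}^k(p) - g_{ij}(p)| \to 0$, the same sign comparison shows that if $p \in K$ has $|g_{ij}(p)| > \varepsilon_k$ for all $j \neq i$, then $p$ lies in $C_i(x,w)$ precisely when it lies in $C_i(x^k, w^k)$. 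Hence
\[
	K_i(x^k,w^k) \triangle K_i(x,w) = K \cap \big(C_i(x^k,w^k) \triangle C_i(x,w)\big) \subseteq \bigcup_{j \neq i} \{p \in K :\ |g_{ij}(p)| \le \varepsilon_k\},
\]
and each of the finitely many sets on the right is a thin slab meeting $K$, of measure $O(\varepsilon_k)$, by the estimate from part (i); summing over $j$ gives $\dS(K_i(x^k,w^k), K_i(x,w)) \to 0$.

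\medskip
The only delicate points — and the only places where the hypotheses are genuinely used — are that the cutting hyperplanes are \emph{honest} hyperplanes (this is where $a \neq 0$, respectively $x_i \neq x_j$ for $x \in \conf(\R^d,n)$, enters, guaranteeing that the trapping sets are slabs of vanishing $d$-measure and not full-dimensional pieces), and the harmless measure-zero behaviour on the boundary $\{g = 0\}$, which is absorbed into the strict comparison $|g(p)| > \varepsilon_k$. I do not expect a real obstacle here: the multi-inequality case in (ii) reduces verbatim to the single-slab estimate of (i), and everything else is bookkeeping. The standing assumptions $K \cap H^+ \in \K^d$ (resp.\ $K_i(\cdot) \in \K^d$) are needed only so that the conclusion can be phrased as convergence in the metric space $(\K^d, \dS)$; the measure estimate itself does not use them.
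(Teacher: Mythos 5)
Your proof is correct, and it takes a genuinely different route from the paper's. The paper proves (i) in the Hausdorff metric: it splits $\dH$ into its two suprema and kills each by a compactness-and-contradiction argument (the second one using convexity of $K$ and the hypothesis that $K\cap H^+_{(a,b)}$ has non-empty interior, to find a nearby point strictly inside the half-space), then invokes the Gruber--Kenderov equivalence of $\dS$ and $\dH$ on $\K^d$; part (ii) is then deduced from (i) via a small auxiliary fact, namely that $\dS$-limits commute with intersections because $(A^k\cap B^k)\triangle(A\cap B)\subseteq(A^k\triangle A)\cup(B^k\triangle B)$, applied to the finitely many half-spaces $H^+_{i,j}$ cutting out the Voronoi cell. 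You instead estimate $\dS$ directly: the identity $(K\cap A)\triangle(K\cap B)=K\cap(A\triangle B)$ plus uniform convergence of the affine functions on the bounded set $K$ traps the symmetric difference inside a finite union of slabs of width $O(\varepsilon_k)$, whose Lebesgue measure visibly tends to $0$; part (ii) is handled by the same slab estimate applied to the $n-1$ affine functions $g_{ij}$ (whose gradients $2(x_i-x_j)$ are nonzero and fixed), with no reduction to (i) needed. Your argument is more elementary and quantitative (an explicit $O(\varepsilon_k)$ bound), avoids both convexity of $K$ and the metric-equivalence citation, and, as you correctly observe, uses the hypotheses $K\cap H^+\in\K^d$, resp.\ $K_i(\cdot)\in\K^d$, only so that the statement makes sense in $(\K^d,\dS)$ -- whereas the paper's proof of (i) genuinely uses them. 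What the paper's route buys is the convergence statement in the Hausdorff metric itself and the reusable intersection lemma; what yours buys is brevity, an explicit rate, and weaker hypotheses for the estimate.
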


\begin{proof}
(i)
Since $\dS$ and $\dH$ induce the same topology on $\K$, it is enough to prove convergence in $\dH$ metric. 
Recall that
		\[
			\dH(K\cap H_{(a^k,b^k)}^+, K\cap H_{(a,b)}^+) = \max \big\{\sup_{x \in K \cap H_{(a^k,b^k)}^+}\dist(x, K\cap H_{(a,b)}^+), \sup_{x \in K \cap H_{(a,b)}^+}\dist(x, K\cap H_{(a^k,b^k)}^+)\big\}.
		\]
	
For the first supremum we have 
		\begin{equation*} \label{eq: first sup}
			\sup_{x \in K \cap H_{(a^k,b^k)}^+}\dist(x, K\cap H_{(a,b)}^+) \xrightarrow{~ k \to \infty ~} 0	.
		\end{equation*}
Indeed, assume to the contrary that for some sequence $(z_k \in K \cap H_{(a^k,b^k)}^+)_{k \ge 1}$, we have that 
		\[
			\dist(z_k, K \cap H_{(a,b)}^+) > 2\varepsilon.
		\]
Since $K$ is compact, we could have chosen the sequence $z_k$ from the very start such that $z_k \xrightarrow{~ k \to \infty ~} z \in K$, and so
		\[
			\dist(z, K \cap H_{(a,b)}^+) > \varepsilon.
		\]
From this we obtain a contradiction by showing $z_k \notin H_{(a^k, b^k)}^+$ for $k >> 0$. This is indeed true, since
		\[
			\langle z_k , a^k \rangle + b^k \xrightarrow{~ k \to \infty ~} \langle z,  a\rangle + b < 0.
		\]
		
\medskip
For the second supremum we have that
		\begin{equation*} \label{eq: second sup}
			\sup_{x \in K \cap H_{(a,b)}^+}\dist(x, K\cap H_{(a^k, b^k)}^+) \xrightarrow{~ k \to \infty ~} 0	.
		\end{equation*}
Indeed, assume to the contrary that for some sequence $(z_k \in K \cap H_{(a,b)}^+)_{k \ge 1}$ we have
		\[
			\dist(z_k, K \cap H_{(a^k, b^k)}^+) > 2\varepsilon.
		\]
Since $K$ is compact, there is a subsequence with $z_k \xrightarrow{~ k \to \infty ~} z \in K \cap H_{(a,b)}^+$, hence
		\[
			\dist(z, K \cap H_{(a^k, b^k)}^+) > \varepsilon.
		\] 
for each $k \ge 1$. Notice first that $z \in K \cap H_{(a,b)}$. Indeed, strict inequality $z \cdot a + b > 0$ would 		
		\[
			\langle z, a^k\rangle + b^k \xrightarrow{~ k \to \infty ~} \langle z , a\rangle + b > 0,
		\] 
which would mean $z \in K \cap H_{(a^k, b^k)}^+$, which is impossible. Since $(H^+_{(a, b)} \setminus H_{(a, b)}) \cap K \neq \emptyset$ by assumption, by convexity there exists 
		\[
			y \in (H^+_{(a, b)} \setminus H_{(a, b)}) \cap K
		\]
with $\dist(x,y) < \varepsilon$. Moreover, we have $y \in K \cap H_{(a^k, b^k)}^+$ for $k >> 0$, since 
		\[
			\langle y, a^k\rangle + b^k \xrightarrow{~ k \to \infty ~} \langle y, a\rangle + b > 0.
		\] 
This means that 
		\[
			\varepsilon > \dist(x,y) \ge \dist(z, K\cap H_{(a^k, b^k)}^+) > \varepsilon
		\] 
for $k>>0$, which is a contradiction.

\medskip\noindent	
(ii) We will first show that for two converging sequences $A^k \xrightarrow{~\dS~} A$ and $B^k \xrightarrow{~\dS~} B$ in $\K^d$ such that $A^k \cap B^k, A\cap B \in \K^d$, we have 
	\[
		A^k \cap B^k \xrightarrow{~\dS~} A \cap B
	\]
as $k \to \infty$. Indeed, from
	\[
		(A^k \cap B^k) \triangle (A \cap B) \subseteq (A^k \triangle A) \cup (B^k \triangle B)
	\]
it follows that
	\[
		\dS(A^k \cap B^k, A \cap B) \le \dS(A^k, A) + \dS(B^k ,B) \xrightarrow {~ k \to \infty ~}0,
	\]
as desired.
	
\medskip	
Back to the proof of the main claim. For $(x,w) \in \conf(\R^d, n) \times W_n$ and $1 \le i \le n$ the $i$th component $C_i(x,w)$ of the generalised Voronoi diagram $C(x,w)$ is equal to the intersection of closed half-spaces
	\[
		H^+_{i,j}(x,w) := \{p \in \R^d: \|p-x_j\|^2-\|x_j\|^2 - w_j - \|p-x_i\|^2-\|x_i\|^2 - w_i \ge 0\}
	\]
for $1 \le j \le i$ and $j \neq i$. Therefore, by the repeated use of the above intersection argument and part (i), we have
	\[
		\dS(K_i(x,w), K_i(x^k,w^k)) = \dS(K \cap \bigcap_{j \neq i} H^+_{i,j}(x,w), K \cap \bigcap_{j \neq i} H^+_{i,j}(x^k,w^k)) \xrightarrow {~ k \to \infty ~}0,
	\]
which completes the proof.
\end{proof}

\medskip
Recall, for a measure $\mu$ which is absolutely continuous with respect to the Lebesgue measure $\leb$ on $\R^d$, by the Radon-Nikodym theorem, there exists an integrable function $f:\R^d \to \R$ such that
	\[
		\mu(A) = \int_A f ~ d\leb,
	\]
	for each measurable set $A \subseteq \R^d$. We will need the next measure-theoretic claim.

\medskip
\begin{lemma}
	Let $\mu$ be a probability measure on $\R^d$ which is absolutely continuous with respect to the Lebesgue measure $\leb$. Then, the following implication holds
	\begin{equation} \label{eq: lebesgue -> 0 => mu -> 0}
		\leb(A^k) \xrightarrow{~ k \to \infty ~} 0 ~~ \implies \mu(A^k) \xrightarrow{~ k \to \infty ~} 0,
	\end{equation}
	where $(A^k)_{k \ge 1}$ is any sequence of measurable sets in $\R^d$.
\end{lemma}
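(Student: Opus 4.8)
The plan is to establish the stronger $\varepsilon$--$\delta$ form of absolute continuity for the finite measure $\mu$, namely: for every $\varepsilon > 0$ there exists $\delta > 0$ such that $\leb(A) < \delta$ implies $\mu(A) < \varepsilon$ for every measurable $A \subseteq \R^d$. Once this is available, the implication \eqref{eq: lebesgue -> 0 => mu -> 0} follows at once: given $\varepsilon > 0$, pick the corresponding $\delta$, and since $\leb(A^k) \to 0$ there is an index $k_0$ with $\leb(A^k) < \delta$ for all $k \ge k_0$, hence $\mu(A^k) < \varepsilon$ for all $k \ge k_0$.

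To prove the $\varepsilon$--$\delta$ statement, I would invoke the Radon--Nikodym density $f \colon \R^d \to [0,\infty)$ with $\mu(A) = \int_A f \, d\leb$ and $\int_{\R^d} f \, d\leb = \mu(\R^d) = 1 < \infty$. The first step is to use integrability of $f$ to control its ``large'' part: the functions $f \cdot \mathbf{1}_{\{f > N\}}$ are pointwise nonincreasing to $0$ as $N \to \infty$ and dominated by the integrable function $f$, so by the dominated convergence theorem
\[
	\int_{\{f > N\}} f \, d\leb \xrightarrow{~ N \to \infty ~} 0 .
\]
Hence, given $\varepsilon > 0$, one may fix $N$ large enough that $\int_{\{f > N\}} f \, d\leb < \varepsilon/2$.

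The second step is the elementary splitting: for any measurable $A$,
\[
	\mu(A) = \int_A f \, d\leb = \int_{A \cap \{f \le N\}} f \, d\leb + \int_{A \cap \{f > N\}} f \, d\leb \le N \cdot \leb(A) + \frac{\varepsilon}{2}.
\]
Choosing $\delta := \varepsilon / (2N)$, we conclude that $\leb(A) < \delta$ forces $\mu(A) < \varepsilon$, which is exactly the $\varepsilon$--$\delta$ claim. I expect no genuine obstacle here; the only point requiring a moment's care is the justification of $\int_{\{f>N\}} f\,d\leb \to 0$, which is immediate from integrability of $f$ together with dominated (or monotone) convergence. All the rest is bookkeeping.
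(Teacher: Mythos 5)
Your proof is correct, and it takes a genuinely different route from the paper's. The paper argues sequentially: it considers the functions $f\cdot\chi_{A^k}$ (with $f$ the Radon--Nikodym density), passes to a subsequence along which $\sum_k \leb(A^k)$ is small so that $f\cdot\chi_{A^k}\to 0$ pointwise almost everywhere, and then applies the Dominated Convergence Theorem with dominant $f$ to conclude $\mu(A^k)\to 0$. You instead prove the stronger, uniform $\varepsilon$--$\delta$ form of absolute continuity by truncating the density at level $N$, using $\int_{\{f>N\}} f\, d\leb \to 0$ and the bound $\mu(A) \le N\,\leb(A) + \int_{\{f>N\}} f\, d\leb$, and then deduce the sequential statement trivially. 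Your approach buys a quantitative statement valid for all measurable sets at once (the textbook characterisation of absolute continuity for finite measures) and sidesteps the subsequence bookkeeping in the paper's argument, where the existence of the pointwise limit of $f\cdot\chi_{A^k}$ requires passing to a subsequence and an implicit sub-subsequence reduction; the paper's version is marginally shorter in that it applies DCT directly to the sets at hand without introducing the truncation parameter. Both proofs rest on the same two ingredients, the Radon--Nikodym density and dominated convergence, so the difference is one of organisation rather than depth, but your route is the more robust of the two.
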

\begin{proof}
	The proof, in essence, follows from a use of the Dominant Convergence Theorem \cite[Sec.\,A3.2]{Alt2016}.
	Let $\chi_A$ denotes the characteristic function of a measurable set $A \subseteq \R^d$. 
	We prove \eqref{eq: lebesgue -> 0 => mu -> 0} in two steps.
	
	\medskip
	Let is first show that a sequence of functions $(f\cdot \chi_{A_k})_{k \ge 1}$ converges pointwise almost everywhere to the zero function. Indeed, let $g$ denote the pointwise limit of the sequence $(f\cdot \chi_{A_k})_{k \ge 1}$. Let $\varepsilon > 0$ and let us restrict to a subsequence such that
		$
			\sum_{k=1}^{\infty} \leb(A^k) < \varepsilon
		$.
		Then, we have
		\[
			\{z \in \R^d:~g(z) \neq 0\} \subseteq \bigcup_{k=1}^{\infty} A^k,
		\]
		and therefore 
		\[
			\leb(\{z \in \R^d:~g(z) \neq 0\}) \le \sum_{k=1}^{\infty} \leb(A^k) < \varepsilon.
		\]
		Since this holds for any $\varepsilon > 0$, it follows that $\leb(\{z \in \R^d:~g(z) \neq 0\}) = 0$, so $g$ is zero almost everywhere.

	\medskip
	Finally, by the Dominant Convergence Theorem applied to the dominant $f$, we have 
	\[
		\lim_{k \to \infty} \mu(A^k) = \lim_{k \to \infty} \int_{\R^d}f\cdot \chi_{A_k}(x) ~ d\leb(x)  = \int_{\R^d}\big(\lim_{k \to \infty} f\cdot \chi_{A_k}(x)\big) ~ d\leb(x) = 0,
	\]
	which finishes the proof of the implication \eqref{eq: lebesgue -> 0 => mu -> 0}.
\end{proof}

\medskip
We are now in the position to prove Lemma \ref{lemma: continuity of weights}.

\medskip
\begin{proof} [Proof of Lemma \ref{lemma: continuity of weights}]
	Let $x^k \longrightarrow x \in \conf(\R^d, n)$ and $K^k \xrightarrow{~\dH~} K \in \K^d$ as $k \to \infty$. We want to show
	\[
		w_{K^k}(x^k) \xrightarrow{~ k \to \infty ~} w_K(x) \in W_n.
	\]
	Let us shorten the notation and denote $w := w_K(x)$ and $w^k := w_{K^k}(x^k)$ for each $k \ge 1$.
	
	\medskip
	First, notice that the sequence $(w^k)_{k \ge 1}$ is bounded. Indeed, $K^k \xrightarrow{\dH} K$ as $k \to \infty$ implies
	\[
		\sup_{z\in K^k} \dist(z, K) \xrightarrow{~ k \to \infty ~} 0,
	\]
	so bodies $K^k$ are contained in a bounded region around $K \subseteq\R^d$. Similarly, $x^k \to x$ as $k \to \infty$ implies that the sites $x^k$ live in a bounded region around $x \in \conf(\R^d, n)$. Therefore, for any $1 \le i < j \le n$ the difference $|w_i^k - w_j^k|$ must be bounded for all $k \ge 1$, proving that the sequence $(w^k)_{k \ge 1}$ is bounded in $W_n$. Indeed, if for some $1 \le i < j \le n$ we have $|w_i^k - w_j^k| \to \infty$ as $k \to \infty$, it would imply 
	\[
	K^k_i(x^k, w^k) = K^k \cap C_i(x^k, w^k) = \emptyset
	\]
	for $k >> 0$, which is a contradiction.
	
	\medskip
	Convergence $w^k \to w$ as $k \to \infty$ is equivalent to the same convergence for each subsequence of $(w^k)_{k \ge 1}$. Let us assume the latter is not true and seek a contradiction. By the boundedness of weights, and after possibly restricting to a subsequence, we have $w^k \longrightarrow w' \in W_n$ as $k \to \infty$, for some $w' \neq w$. Due to the uniqueness of the weight vector in $W_n$ for given sites $x$ and a convex body $K$, contradiction would follow from the fact that
	\[
		\mu(K_i(x,w')) = \mu(K_i(x,w))
	\]
	for each $1\leq i\leq n$.
	
	\medskip
	Since $\leb(K \triangle K^k) \to 0$ as $k \to \infty$, by \eqref{eq: lebesgue -> 0 => mu -> 0} we have
	\[
		|\mu(K_i(x^k, w^k)) - \mu(K^k_i(x^k, w^k))| \le \mu(K_i(x^k, w^k)) \triangle K^k_i(x^k, w^k)) \le \mu(K \triangle K^k) \xrightarrow{~ k \to \infty ~} 0.
	\]
	From $(x^k, w^k) \longrightarrow (x, w')$ as $k \to \infty$ and Lemma \ref{lemma: hyperplane cuts off continuously} (ii) it follows that $K_i(x^k, w^k) \xrightarrow{\dS} K_i(x, w')$ as $k \to \infty$, therefore by \eqref{eq: lebesgue -> 0 => mu -> 0} we have
	\[
		|\mu(K_i(x, w')) - \mu(K_i(x^k, w^k))| \xrightarrow{~ k \to \infty ~} 0.
	\]
	Putting these two convergences together we get
	\begin{equation} \label{eq: approximation of areas}
		|\mu(K_i(x, w')) - \mu(K^k_i(x^k, w^k))| \xrightarrow{~ k \to \infty ~} 0.
	\end{equation}
	In particular, from $|\mu(K^k) - \mu(K)| \le \mu(K \triangle K^k) \to 0$ as $k \to \infty$, we get 
	\[
		\mu(K^k_i(x^k, w^k)) = \frac{1}{n}\mu(K^k) \xrightarrow{~k \to \infty~} \frac{1}{n}\mu(K) = \mu(K_i(x, w)),
	\]
	which together with \eqref{eq: approximation of areas} implies $\mu(K_i(x,w')) = \mu(K_i(x,w))$ as desired.
\end{proof}

\end{document}